\numberwithin{equation}{section}
\def\Ext{\mbox{\rm Ext}\,} \def\Hom{\mbox{\rm Hom}} \def\dim{\mbox{\rm dim}\,} \def\Iso{\mbox{\rm Iso}\,}
\def\lr#1{\langle #1\rangle}    \def\top{\mbox{\rm top}\,}\def\soc{\mbox{\rm soc}\,}
\def\Ker{\mbox{\rm Ker}\,}    \def\Coker{\mbox{\rm Coker}\,}
\def\End{\mbox{\rm End}\,}
\def\Dim{\mbox{\rm \textbf{dim}}\,}\def\A{\mathcal{A}\,} \def\D{\mathcal{D}}
\def\P{\mathcal{P}\,}\def\I{\mathcal{I}\,}
\def\ZZ{\mathbb Z}
\def\x{{\bf x}}
\theoremstyle{plain} 
\newtheorem{theorem}{\bf Theorem}[section]
\newtheorem{lemma}[theorem]{\bf Lemma}
\newtheorem{corollary}[theorem]{\bf Corollary}
\newtheorem{proposition}[theorem]{\bf Proposition}
\theoremstyle{definition} 
\newtheorem{definition}[theorem]{\bf Definition}
\newtheorem{remark}[theorem]{\bf Remark}
\newtheorem{example}[theorem]{\bf Example}
\newcommand{\bt}{\begin{theorem}}
\newcommand{\et}{\end{theorem}}
\newcommand{\bl}{\begin{lemma}}
\newcommand{\el}{\end{lemma}}
\newcommand{\bd}{\begin{definition}}
\newcommand{\ed}{\end{definition}}
\newcommand{\bc}{\begin{corollary}}
\newcommand{\ec}{\end{corollary}}
\newcommand{\bp}{\begin{proof}}
\newcommand{\ep}{\end{proof}}
\newcommand{\bx}{\begin{example}}
\newcommand{\ex}{\end{example}}
\newcommand{\br}{\begin{remark}}
\newcommand{\er}{\end{remark}}
\newcommand{\be}{\begin{equation}}
\newcommand{\ee}{\end{equation}}
\newcommand{\ba}{\begin{align}}
\newcommand{\ea}{\end{align}}
\newcommand{\bn}{\begin{enumerate}}
\newcommand{\en}{\end{enumerate}}
\newcommand{\bcs}{\begin{cases}}
\newcommand{\ecs}{\end{cases}}
\renewcommand{\section}{\@startsection{section}{1}{0mm}
  {-\baselineskip}{0.5\baselineskip}{\bf\leftline}}
\begin{document}

\title[The cluster multiplication theorem for quantum cluster algebras]{The cluster multiplication theorem for\\ acyclic quantum cluster algebras} 

\author{Xueqing Chen, Ming Ding and Haicheng Zhang$^*$}
\address{Department of Mathematics, University of Wisconsin-Whitewater\\
800 W. Main Street, Whitewater, WI.53190. USA}
\email{chenx@uww.edu (X.Chen)}
\address{School of Mathematics and Information Science\\
Guangzhou University, Guangzhou 510006, P.~R.~China}
\email{m-ding04@mails.tsinghua.edu.cn (M. Ding)}
\address{Institute of Mathematics, School of Mathematical Sciences, Nanjing Normal University,
Nanjing 210023, P.~R.~China}
\email{zhanghc@njnu.edu.cn (H. Zhang)}

\subjclass[2010]{ 
17B37, 16G20, 17B20.
}
%
\keywords{ 
Quantum cluster algebras; Derived Hall algebras; Comultiplications;  Integration maps; Cluster multiplication formulas.
}
\thanks{$*$~Corresponding author.}


\begin{abstract}
Let $Q$ be a finite acyclic valued quiver. We give the cluster multiplication formulas in the quantum cluster algebra of $Q$ with arbitrary coefficients, by applying certain quotients of derived Hall subalgebras of $Q$.
These formulas can be viewed as the quantum version of the cluster multiplication theorem in the classical cluster algebra proved by Caldero-Keller for finite type, Hubery for affine type and Xiao-Xu for acyclic quivers.

\end{abstract}

\maketitle

\section{Introduction}

In order to study total positivity in algebraic groups and canonical bases in quantum groups,  Fomin and Zelevinsky~\cite{FZ} invented the cluster algebra which is a commutative algebra generated by a family of generators called cluster variables. The set of all cluster variables is constructed recursively via mutations from initial cluster variables.
Cluster categories are certain quotients of derived categories of representations of finite dimensional algebras, which were introduced in~\cite{BMRRT} to ``categorify'' cluster algebras.
Cluster categories have led to new development in the theory of the (dual) canonical bases and provided an insight into cluster algebras.

The connections between the cluster algebras and the cluster categories are explicitly characterized by the Caldero-Chapoton map in \cite{CC} and the
Caldero-Keller multiplication theorem in \cite{CK2005,CK2}. The
Caldero-Chapoton map associates the objects in the cluster
categories to some Laurent polynomials. Indeed, using the Caldero-Chapoton map, for acyclic cases, Caldero
and Keller established a one-to-one correspondence between the cluster tilting objects and the indecomposable rigid objects of the cluster category and the clusters and the cluster variables of the associated cluster algebra, respectively.


Let $Q$ be a Dynkin quiver and $\mathcal{C}_Q$ the cluster category of $Q$. The  Caldero-Chapoton character $X_M$ only depends on the isomorphism class of the object
$M$ in  $\mathcal{C}_Q$.  It was proved in \cite{CC} that
\begin{equation}\label{hall}
X_M X_N =X_{M\oplus N}
\end{equation}
for any objects $M$ and $N$ in $\mathcal{C}_Q$.
Caldero and Keller~\cite{CK2005} proved the following cluster multiplication formula:
\begin{equation}\label{general}
{\chi (\mathbb{P} \operatorname{Ext}^1(M,N))} X_M X_N =\sum_E \displaystyle { \big( \chi (\mathbb{P} \operatorname{Ext}^1(M,N)_E) +\chi (\mathbb{P} \operatorname{Ext}^1(N,M)_E) \big)} X_E
\end{equation}
for any objects $M, N$ in $\mathcal{C}_Q$ such that $\operatorname{Ext}^1(M,N) \neq 0$,
where $\chi$ is the Euler-Poincar\'e characteristic of \'etale cohomology with proper support  and  the sum is taken over all isomorphism classes of objects $E$ in $\mathcal{C}_Q$. Note that the coefficient of $X_{M\oplus N}$
on the righthand side is $0$. This formula is called the {\em cluster multiplication theorem} for finite type.

Let $Q$ be an acyclic quiver. For any pair of indecomposable objects $M$  and $N$ of the cluster category $\mathcal{C}_Q$ such that $\operatorname{Ext}^1(M,N)$ is one-dimensional,  Caldero and Keller \cite{CK2} showed that
\begin{equation}\label{one-dim}
X_M X_N= X_E+X_{E'},
\end{equation}
where $E$ and $E'$ are the unique objects (up to morphism) such that there exist non-split triangles
$N\rightarrow E\rightarrow M\rightarrow N[1]$ and $M\rightarrow E'\rightarrow N\rightarrow M[1]$.




The cluster multiplication theorem in (\ref{general}) was generalized to affine type by Hubery in \cite{Hubery1} and to any type by Xiao-Xu in \cite{XX, Xu}. Palu \cite{Palu2} extended the equation (\ref{general}) to $2$-Calabi-Yau triangulated categories with cluster tilting objects. Fu and Keller \cite{FuK} further generalized Palu's results to $2$-Calabi-Yau Frobenius exact categories and certain subcategories of $2$-Calabi-Yau triangulated categories.  In the cluster theory, the Caldero-Chapoton map and the cluster multiplication theorem play a
very important role in proving some structural results such as bases with good properties,
positivity conjecture, denominator conjecture and so on~(cf. \cite{CK2005,DXX}).

The quantum cluster algebras, as the quantum deformations of cluster algebras, were defined in~\cite{BZ05} to serve as an algebraic framework for the study of dual canonical bases in coordinate rings and their q-deformations.
Rupel \cite{Rupel1} defined a quantum analogue of the Caldero-Chapoton map for the category of representations of an acyclic valued quiver over a finite field.
The quantum version of the equation (\ref{one-dim}) was proved by Rupel in \cite{Rupel1} for indecomposable rigid objects for all finite type valued quivers and rank 2 valued quivers,
by Qin ~\cite{Qin} for indecomposable rigid objects for acyclic quivers, by Ding-Xu~\cite{DX} for more objects, and then was generalized by Rupel~\cite{Rupel2} for acyclic valued quivers.
In \cite{DX, Fei, BR, DSC, FPZ}, the quantum version of the equation~(\ref{hall}) was confirmed for the quantum cluster algebra of an acyclic quiver, see the equations (\ref{qre3}), (\ref{qre6}) and (\ref{qre7}) in Corollary~\ref{Hallcfgs}.

The aim of this paper is to provide a quantum version of the cluster multiplication theorem in (\ref{general}) for acyclic quantum cluster algebras.
In particular, we also obtain a quantum version of the cluster multiplication formulas in (\ref{one-dim}).
To the best of our knowledge, in the literature, the quantum cluster multiplication theorem has only been deduced for the rank two affine cases (cf. \cite{dx1,bcdx}).

The cluster multiplication theorem showed the similarity between the multiplication in a cluster algebra and that in a dual Hall algebra, which motivated a natural idea to construct a framework to explicitly relate the dual Hall algebras with the quantum cluster algebras.
The Hall algebra of a finite dimensional algebra $A$ over a finite field was introduced by Ringel~\cite{R90,R90a} in 1990.
To\"en \cite{Toen2006} generalized Ringel's construction to define the derived Hall algebra for a DG-enhanced triangulated category satisfying certain finiteness conditions. Later on, for a triangulated category satisfying the left homological finiteness condition, Xiao and Xu \cite{XiaoXu} showed that To\"en's construction still provides an associative unital algebra.


Given a finite acyclic quiver $Q$, let $\mathcal{AH}_{q}(Q)$ be the subalgebra of a certain skew-field of
fractions generated by quantum cluster characters (see Section~\ref{q-cluster} for more details). Then, there
exists an algebra homomorphism from the dual Hall algebra associated to the representation
category of $Q$ to $\mathcal{AH}_{q}(Q)$ (cf. \cite{CDX}). However, this homomorphism may not be surjective, in particular, there might be no preimages of  the initial quantum cluster variables.
In order to overcome this shortcoming, in \cite{DXZ}, the authors considered the morphism category $\mathcal{C}_2(\mathcal{P})$ of projective representations of $Q$, which has indecomposable objects indexed by the isomorphism classes of indecomposable objects in the cluster category $\mathcal{C}_Q$.  They constructed the localized Hall algebra $\mathcal{MH}_q(Q)$ associated to $\mathcal{C}_2(\mathcal{P})$ which contains the dual Hall algebra of $Q$ as a subalgebra, and obtained a surjective algebra homomorphism from  $\mathcal{MH}_\Lambda (Q)$ (a certain twisted version of $\mathcal{MH}_{q}(Q)$) to $\mathcal{AH}_{q}(Q)$, and then realized the quantum cluster algebra as a sub-quotient algebra of this Hall algebra. In fact, the algebra $\mathcal{MH}_{q}(Q)$ is isomorphic to a subalgebra of the extended dual derived Hall algebra of $Q$. Recently, Fu, Peng and Zhang~\cite{FPZ} provided a bialgebra structure and an integration homomorphism on  $\mathcal{MH}_{q}(Q)$. Then they recovered the surjective homomorphism
defined in~\cite{DXZ}, and also recovered the quantum Caldero-Chapoton map, as well as some cluster multiplication formulas.

In order  to get the cluster multiplication theorem for the quantum cluster algebra associated to an acyclic valued quiver $Q$,
we will use the derived Hall algebra instead of  $\mathcal{MH}_{q}(Q)$. Following~\cite{FPZ}, we similarly define the comultiplications and integration maps on the subalgebras
of derived Hall algebras to obtain the formulas of (generalised) quantum cluster characters. Then we introduce two certain quotients of the derived Hall subalgebras and use these quotient algebras to construct two multiplication formulas between $u_M$ and $u_{P[1]}$, and between $u_M$ and $u_N$, respectively. For the first multiplication formula, we immediately get the corresponding cluster multiplication formula by establishing an algebra homomorphism from the defined algebra to the quantum torus $\mathcal{T}_\Lambda$. The second multiplication formula inspired us to directly prove the cluster multiplication formula between
$X_M$ and $X_N$.  These two cluster multiplication formulas (Theorem~\ref{dyggs} and Theorem \ref{ddlz}) can be viewed as the quantum version of the cluster multiplication
theorem in (1.2) for acyclic quantum cluster algebras. Recently, Chen, Xiao and Xu \cite{CXX} proved a general multiplication formula between weighted quantum cluster characters. It is worthy of a further investigation into the relations between their formula and ours.


The paper is organized as follows: In Section 2 we recall the definition of derived Hall algebra and introduce its three
special subalgebras in order to relate with quantum cluster algebras.  The comultiplications and integration maps on these subalgebras are provided in Section 3. In Section 4 the formulas of quantum cluster characters are constructed by applying the comultiplications and integration maps. We introduce the first quotient algebra to prove the first multiplication formula and the cluster multiplication theorem for the quantum cluster characters $X_M$ and $X_{P[1]}$ in Section 5, and introduce the second one to prove the second  multiplication formula in Section 6.
Finally, in Section 7, we directly prove the cluster multiplication theorem for the quantum cluster characters $X_M$ and $X_{N}$.

Let us fix some notations used throughout the paper. For a finite set $S$, we denote by $|S|$ its cardinality. Let $k=\mathbb{F}_q$ be a finite field with $q$ elements, and set $v=\sqrt{q}$. Let $\ZZ[v,v^{-1}]$ be the ring of integral Laurent polynomials. Let $A$ be a finite dimensional hereditary $k$-algebra, and denote by $\A$ the category of finite dimensional left $A$-modules; let $\P=\P_{\A}$ and $\I=\I_{\A}$ be the subcategories of $\A$ consisting of projective objects and injective objects, respectively. For a module $M\in\A$, we use $\Dim M$ or $\hat{M}$ to denote its dimension vector. For an abelian or triangulated category $\mathcal {E}$, the Grothendieck group of $\mathcal {E}$ and the set of isomorphism classes $[X]$ of objects in $\mathcal {E}$ are denoted by $K(\mathcal {E})$ and $\Iso(\mathcal {E})$, respectively. For each object $M$ in $\mathcal {E}$, the image of $M$ in $K(\mathcal {E})$ is denoted by $\hat{M}$, which we identify with $\Dim M$ if $\mathcal {E}=\A$.
We always assume that all the vectors are column vectors, and all tensor products are taken over $\ZZ[v,v^{-1}]$.

\section{Derived Hall algebras}
In this section, we recall the definition of the derived Hall algebra, and introduce three of its subalgebras.

The derived Hall algebra of the bounded derived category $D^b(\A)$ of $\A$ was introduced by To\"{e}n in \cite{Toen2006} (see also \cite{XiaoXu}). By definition, the (Drinfeld dual) {\em derived Hall algebra} $\mathcal {D}\mathcal {H}(\A)$ is the free $\ZZ[v,v^{-1}]$-module with the basis $\{u_{X_\bullet}~|~X_\bullet\in \Iso(D^b(\A))\}$ and the multiplication defined by
\begin{equation}
u_{X_\bullet}\diamond u_{Y_\bullet}=\sum\limits_{[{Z_\bullet}]}\frac{|\Ext^1_{D^b(\A)}({X_\bullet},{Y_\bullet})_{Z_\bullet}|}{\prod\limits_{i\geq0}|\Hom_{D^b(\A)}({X_\bullet}[i],{Y_\bullet})|^{(-1)^i}} u_{Z_\bullet},
\end{equation}
where $\Ext^1_{D^b(\A)}({X_\bullet},{Y_\bullet})_{Z_\bullet}$ is defined to be $\Hom_{D^b(\A)}({X_\bullet},{Y_\bullet}[1])_{{Z_\bullet}[1]}$, which denotes the subset of $\Hom_{D^b(\A)}({X_\bullet},{Y_\bullet}[1])$ consisting of morphisms $f:{X_\bullet}\rightarrow {Y_\bullet}[1]$ whose cone is isomorphic to ${Z_\bullet}[1]$.

For any ${X_\bullet},{Y_\bullet}\in D^b(\A)$, define
\begin{equation*}
\lr{{X_\bullet},{Y_\bullet}}:=\sum\limits_{i\in\mathbb{Z}}(-1)^i\dim_k\Hom_{D^b(\A)}({X_\bullet},{Y_\bullet}[i]).
\end{equation*}
The sum above descends to give a bilinear form on the Grothendieck group of $D^b(\A)$. Moreover, this bilinear form coincides with the Euler form of $K(\A)$ over the objects in $\A$. In particular, for any $M,N\in\A$ and $i,j\in\mathbb{Z}$, we have that $\lr{M[i],N[j]}=(-1)^{i-j}\lr{M,N}$.

In what follows, given four objects $M,N,X,Y\in\A$, we set
$$_X\Hom_{\A}(M,N)_Y:=\{f: M\rightarrow N~|~\Ker f\cong X~\text{and}~\Coker f\cong Y\}.$$

Let us twist the multiplication in $\mathcal {D}\mathcal {H}(\A)$ as follows:
\begin{equation}u_{X_\bullet}\ast u_{Y_\bullet}=q^{\lr{{X_\bullet},{Y_\bullet}}} u_{X_\bullet}\diamond u_{Y_\bullet}\end{equation}
for any ${X_\bullet},{Y_\bullet}\in D^b(\A)$.
The \emph{twisted derived Hall algebra} $\mathcal {D}\mathcal {H}_{q}(\A)$ is the same module as $\mathcal {D}\mathcal {H}(\A)$, but with the twisted multiplication.
\begin{proposition}{\rm(\cite{Toen2006})}\label{twistderived}
The twisted derived Hall algebra $\mathcal {D}\mathcal {H}_q(\A)$ is an associative unital algebra generated by the elements in $\{u_{M[i]}~|~M\in\Iso(\A),~i\in \mathbb{Z}\}$ and the following relations
\begin{flalign}
&u_{M[i]}\ast u_{N[i]}=q^{\lr{M,N}}\sum\limits_{[L]}{\frac{{|\Ext_\mathcal{A}^1{{(M,N)}_L}|}}{{|\Hom_\mathcal{A}(M,N)|}}}u_{L[i]};\\
&u_{M[i]}\ast u_{N[i+1]}=u_{M[i]\oplus N[i+1]};\\
&u_{M[i+1]}\ast u_{N[i]}=q^{-\lr{M,N}}\sum\limits_{[X],[Y]}|{}_X\Hom_{\A}(M,N)_Y| u_{Y[i]}\ast u_{X[i+1]};\\
&u_{M[i]}\ast u_{N[j]}=q^{(-1)^{i-j}\lr{M,N}} u_{N[j]}\ast u_{M[i]}, \quad i-j>1.
\end{flalign}
\end{proposition}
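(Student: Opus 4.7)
The plan is to read off each relation directly from the definitions (2.1)--(2.2), exploiting two consequences of $\A$ being hereditary: every object of $D^b(\A)$ splits as $\bigoplus_{i\in\ZZ} H^i(X_\bullet)[-i]$, and $\Hom_{D^b(\A)}(M[i], N[j]) \cong \Ext^{j-i}_\A(M,N)$ vanishes unless $j-i\in\{0,1\}$. The generation statement follows from the decomposition together with relation (2.4). Associativity of $\ast$ is inherited from associativity of $\diamond$ (To\"en's theorem): in every triangle $Y_\bullet\to Z_\bullet\to X_\bullet\to Y_\bullet[1]$ contributing to the product one has $\hat{Z_\bullet}=\hat{X_\bullet}+\hat{Y_\bullet}$, so the twist $q^{\lr{X_\bullet,Y_\bullet}}$ is a $2$-cocycle on $K(D^b(\A))$; the unit of $\diamond$ remains a unit for $\ast$.

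For (2.3), taking $X_\bullet=M[i],\ Y_\bullet=N[i]$, a class in $\Ext^1_{D^b(\A)}(M[i],N[i])=\Ext^1_\A(M,N)$ corresponds to a short exact sequence $0\to N\to L\to M\to 0$ with $Z_\bullet=L[i]$, so the numerator is $|\Ext^1_\A(M,N)_L|$; of the denominator factors $|\Hom_{D^b(\A)}(M[i+k],N[i])|=|\Ext^{-k}_\A(M,N)|$, only $k=0$ contributes $|\Hom_\A(M,N)|$, and multiplying by $q^{\lr{M,N}}$ yields the formula. For (2.4), $\Ext^1_{D^b(\A)}(M[i],N[i+1])=\Ext^2_\A(M,N)=0$ collapses the sum to $Z_\bullet=M[i]\oplus N[i+1]$ with numerator $1$; the surviving denominator $|\Ext^1_\A(M,N)|/|\Hom_\A(M,N)|$ cancels exactly against $q^{\lr{M[i],N[i+1]}}=q^{-\lr{M,N}}$.

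For (2.5), take $X_\bullet=M[i+1],\ Y_\bullet=N[i]$. An element of $\Ext^1_{D^b(\A)}(M[i+1],N[i])=\Hom_\A(M,N)$ is an honest morphism $f:M\to N$; computing the cone and applying the hereditary splitting identifies $Z_\bullet\cong Y[i]\oplus X[i+1]$ with $X=\Ker f$ and $Y=\Coker f$. All denominator factors are trivial since $\Hom_{D^b(\A)}(M[i+1+k],N[i])=\Ext^{-1-k}_\A(M,N)=0$ for $k\geq 0$. Rewriting $u_{Y[i]\oplus X[i+1]}=u_{Y[i]}\ast u_{X[i+1]}$ via (2.4) and pulling out $q^{-\lr{M,N}}$ delivers the stated identity. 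For (2.6) with $i-j>1$, both $\Ext^1_{D^b(\A)}$-groups vanish, so each product lands on $u_{M[i]\oplus N[j]}$; a short computation, in which the denominator of $u_{N[j]}\diamond u_{M[i]}$ picks up nontrivial contributions precisely at $k=i-j-1$ and $k=i-j$, combines with the two twist factors to produce exactly the $q^{(-1)^{i-j}\lr{M,N}}$-commutation.

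The main conceptual step is the cone identification in (2.5): recognising the cone of $f:M\to N$ as $\Ker f[1]\oplus\Coker f$, which is where the hereditary hypothesis is used non-trivially. The remaining relations reduce to bookkeeping once one knows that $\Hom_{D^b(\A)}(M[i],N[j])$ is concentrated in two consecutive shifts, with the telescoping denominator in (2.6) being the main place where parity tracking must be done carefully.
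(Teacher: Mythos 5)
The paper cites this proposition from To\"en's work and supplies no proof of its own, so there is no in-paper argument to compare against. Your reconstruction is correct: you derive each relation by plugging stalk complexes into definitions (2.1)--(2.2) and using the two hereditary facts (objects of $D^b(\A)$ split as shifted cohomologies, and $\Hom_{D^b(\A)}(M[i],N[j])\cong\Ext^{j-i}_\A(M,N)$ is concentrated in degrees $j-i\in\{0,1\}$). The cone identification $C_f\cong\Coker f\oplus\Ker f[1]$ in (2.5), the cancellation $q^{-\lr{M,N}}=|\Ext^1_\A(M,N)|/|\Hom_\A(M,N)|$ in (2.4), and the denominator bookkeeping at $k=i-j-1,i-j$ in (2.6) are all handled correctly; the observation that $\diamond$-associativity passes to $\ast$ because the Euler form is bilinear on $K(D^b(\A))$ is the standard argument, and generation follows from the splitting plus (2.4). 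This is exactly how one would deduce the presentation from To\"en's definition of the derived Hall algebra.
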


In what follows, we will consider the following three subalgebras of the twisted derived Hall algebra $\mathcal {D}\mathcal {H}_q(\A)$.
Let $C_{\A}^1$ and $C_{\A}^2$ be the subcategories of $D^b(\A)$ consisting of objects $M\oplus P[1]$ and $M\oplus I[-1]$ with $M\in\A,P\in\P,I\in\I$, respectively.
Let $C_{\A}^{e}$ be the subcategory of $D^b(\A)$ consisting of objects $I[-1]\oplus M\oplus P[1]$ with $I\in\I,M\in\A,P\in\P$. Since $\A$ is hereditary, it is easy to see that these subcategories are closed under extensions in $D^b(\A)$. Hence,
the submodules $\mathcal {D}\mathcal {H}_q^{c_1}(\A)$, $\mathcal {D}\mathcal {H}_q^{c_2}(\A)$ and $\mathcal {D}\mathcal {H}_q^{ec}(\A)$ of $\mathcal {D}\mathcal {H}_q(\A)$ spanned by all elements $u_{X_\bullet}$ with $X_\bullet$ being in $C_{\A}^1$, $C_{\A}^2$ and $C_{\A}^{e}$, respectively, are three subalgebras of $\mathcal {D}\mathcal {H}_q(\A)$.

Using Proposition \ref{twistderived}, we obtain the following characterizations on these subalgebras.
\begin{proposition}\label{esubalgebra}
The subalgebra $\mathcal {D}\mathcal {H}_q^{ec}(\A)$ is generated by the elements $\{u_{P[1]},u_{M},u_{I[-1]}~|\\P\in\P,M\in\A,I\in\I\}$, and the following relations
\begin{flalign}
&u_{P[1]}\ast u_{Q[1]}=
u_{(P\oplus Q)[1]}
=u_{Q[1]}\ast u_{P[1]};\label{ecr5}\\
&u_{M}\ast u_{P[1]}=u_{M\oplus
P[1]};\label{ecr6}\\
&u_{P[1]}\ast u_{M}=q^{-\lr{P,M}}
\sum\limits_{[F],[P']}|{}_{P'}\Hom_{\A}(P,M)_F|u_{F\oplus P'[1]};\label{ecr8}\\
&u_{M}\ast u_{N}=q^{\lr{M,N}}\sum_{[L]}\frac{|\mathrm{Ext}_{\A}^{1}(M,N)_{L}|}{|\mathrm{Hom}_{\A}(M,N)|}u_L;\label{ecr4}\\
&u_{I[-1]}\ast u_{J[-1]}=
u_{(I\oplus J)[-1]}
=u_{J[-1]}\ast u_{I[-1]};\label{ecr1}\\
&u_{I[-1]}\ast u_{M}=u_{M\oplus
I[-1]};\label{ecr2}\\
&u_{M}\ast u_{I[-1]}=q^{-\lr{M,I}}
\sum\limits_{[G],[I']}|{}_{G}\Hom_{\A}(M,I)_{I'}|u_{G\oplus I'[-1]};\label{ecr3}\\
&u_{I[-1]}\ast u_{P[1]}=q^{-\lr{P,I}}u_{P[1]}\ast u_{I[-1]}=u_{I[-1]\oplus P[1]};\label{ecr7}\end{flalign}
for any $P,Q\in\P$, $M,N\in\A$ and $I,J\in\I$.
\end{proposition}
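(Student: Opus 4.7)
The plan is to derive each of the eight relations as a specialization of the four identities in Proposition~\ref{twistderived}, exploiting heredity of $\A$ to collapse sums via the vanishings $\Ext_\A^1(P,-)=0$ and $\Ext_\A^1(-,I)=0$. For the generation claim, every basis element $u_{I[-1]\oplus M\oplus P[1]}$ of $\mathcal{D}\mathcal{H}_q^{ec}(\A)$ should factor as $u_{I[-1]}\ast u_M\ast u_{P[1]}$ (up to a $\ZZ[v,v^{-1}]$-scalar): because $\A$ is hereditary, $\Hom_{D^b(\A)}(X[i],Y[j])=0$ whenever $|i-j|\geq 2$, so the derived diamond products between pieces in distinct cohomological degrees reduce to their split term.

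The self-products in degrees $1$, $0$, and $-1$ specialize the identity $u_{M[i]}\ast u_{N[i]} = \ldots$ of Proposition~\ref{twistderived} at $i=1,0,-1$, yielding (\ref{ecr5}), (\ref{ecr4}), and (\ref{ecr1}); for (\ref{ecr5}) and (\ref{ecr1}) the vanishing $\Ext_\A^1(P,Q)=\Ext_\A^1(I,J)=0$ leaves only $L=P\oplus Q$ (resp.\ $L=I\oplus J$), and $\lr{P,Q}=\dim\Hom_\A(P,Q)$ cancels the denominator. Relations (\ref{ecr6}) and (\ref{ecr2}) are direct instances of the mixed identity $u_{M[i]}\ast u_{N[i+1]}=u_{M[i]\oplus N[i+1]}$ with $i=0$ and $i=-1$.

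Relations (\ref{ecr8}) and (\ref{ecr3}) specialize the identity $u_{M[i+1]}\ast u_{N[i]}=q^{-\lr{M,N}}\sum_{[X],[Y]}|{}_X\Hom_\A(M,N)_Y|\,u_{Y[i]}\ast u_{X[i+1]}$ at $i=0$ and $i=-1$. The key point is that in a hereditary category, $X=\Ker f$ is projective for $f:P\to M$ and $Y=\Coker f$ is injective for $f:M\to I$, so relations (\ref{ecr6}) and (\ref{ecr2}) can be applied termwise to convert $u_Y\ast u_{X[1]}$ and $u_{Y[-1]}\ast u_X$ into single basis vectors $u_{F\oplus P'[1]}$ and $u_{G\oplus I'[-1]}$. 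For (\ref{ecr7}), the commutation identity of Proposition~\ref{twistderived} at $(i,j)=(1,-1)$ gives $u_{I[-1]}\ast u_{P[1]}=q^{-\lr{P,I}}u_{P[1]}\ast u_{I[-1]}$, and $u_{P[1]}\ast u_{I[-1]}$ is computed directly from the definition: $\Ext^1_{D^b(\A)}(P[1],I[-1])=\Hom_{D^b(\A)}(P[1],I)=\Ext_\A^{-1}(P,I)=0$ kills all nonsplit contributions, and a check of the remaining $\Hom_{D^b(\A)}(P[1][i],I[-1])$ in the denominator yields $u_{P[1]}\ast u_{I[-1]}=q^{\lr{P,I}}u_{P[1]\oplus I[-1]}$.

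The main obstacle is purely bookkeeping: ensuring that the Euler-form prefactors $q^{\pm\lr{\cdot,\cdot}}$ produced by the twist interact correctly with the embedded multiplications used to simplify (\ref{ecr8}), (\ref{ecr3}), and (\ref{ecr7}), so that the stated numerical coefficients emerge after the various cancellations. No new geometric input beyond heredity is needed; everything else is a careful matching of exponents.
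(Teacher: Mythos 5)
Your proposal is correct and follows the same route the paper takes: the paper gives no separate argument for Proposition~\ref{esubalgebra} beyond the one-line remark that it follows from Proposition~\ref{twistderived}, and your derivation supplies exactly the specialization of the four defining identities at $i\in\{-1,0,1\}$, using heredity to make $\Ker f$ projective in~(\ref{ecr8}) and $\Coker f$ injective in~(\ref{ecr3}) so the mixed-degree terms collapse, and computing~(\ref{ecr7}) directly from the definition of $\diamond$ together with the commutation relation. The exponent bookkeeping you flag does check out (e.g.\ $\lr{P[1],I[-1]}=\lr{P,I}$, and $q^{\lr{P,Q}}/|\Hom(P,Q)|=1$ since $\Ext^1_\A(P,Q)=0$), and the generation claim via $u_{I[-1]}\ast u_M\ast u_{P[1]}=q^c\,u_{I[-1]\oplus M\oplus P[1]}$ for some $c\in\ZZ$ is the intended one.
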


The algebras $\mathcal {D}\mathcal {H}_q^{c_1}(\A)$ and $\mathcal {D}\mathcal {H}_q^{c_2}(\A)$ are the subalgebras of $\mathcal {D}\mathcal {H}_q^{ec}(\A)$, which are subject to the relations (\ref{ecr5}-\ref{ecr4}) and (\ref{ecr4}-\ref{ecr3}), respectively.

\section{Comultiplications and integration maps}
In this section, we give the comultiplications and integration maps on the subalgebras of the derived Hall algebra.

Given objects $L,M,N\in\A$, the {\em Ringel-Hall number} $F_{MN}^L$ is defined as the number of subobjects $U$ of $L$ such that $U\cong N$ and $L/U\cong M$.

We give a comultiplication
$$\Delta:\mathcal {D}\mathcal {H}_q^{ec}(\A)\longrightarrow \mathcal {D}\mathcal {H}_q^{ec}(\A)\otimes \mathcal {D}\mathcal {H}_q^{ec}(\A)$$
defined by
\begin{equation}\label{ecyucheng}\Delta(u_{I[-1]\oplus L\oplus P[1] }):=\sum\limits_{[M],[N]}
q^{\lr{\hat{M}-\hat{I},\hat{N}-\hat{P}}}F_{MN}^L(u_{M\oplus I[-1]}\otimes u_{N\oplus P[1]})
\end{equation} for any $L\in\A$, $I\in\I$ and $P\in\P$.
Define the multiplication $\ast$ on $\mathcal {D}\mathcal {H}_q^{ec}(\A)\otimes\mathcal {D}\mathcal {H}_q^{ec}(\A)$ by
\begin{equation}\label{ectwist}
\begin{split}
&(u_{I[-1]\oplus M\oplus P[1] }\otimes u_{J[-1]\oplus N\oplus Q[1] })\ast(u_{I'[-1]\oplus M'\oplus P'[1] }\otimes u_{J'[-1]\oplus N'\oplus Q'[1] }):=\\
&q^{x_0}
(u_{I[-1]\oplus M\oplus P[1] }\ast u_{I'[-1]\oplus M'\oplus P'[1] }\otimes u_{J[-1]\oplus N\oplus Q[1] }\ast u_{J'[-1]\oplus N'\oplus Q'[1] }),
\end{split}
\end{equation} where $x_0=(\hat{N}-\hat{J}-\hat{Q},\hat{M'}-\hat{I'}-\hat{P'})+\lr{\hat{M}-\hat{I}-\hat{P},\hat{N'}-\hat{J'}-\hat{Q'}}$, for any $M, N, M', N'\in\A$, $I, J, I', J'\in\I$ and $P, Q, P', Q'\in\P$.
\begin{proposition}\label{ecp}
The map $\Delta:(\mathcal {D}\mathcal {H}_q^{ec}(\A),\ast)\longrightarrow (\mathcal {D}\mathcal {H}_q^{ec}(\A)\otimes\mathcal {D}\mathcal {H}_q^{ec}(\A),\ast)$ is a homomorphism of algebras.
\end{proposition}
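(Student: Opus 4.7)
The plan is to verify multiplicativity $\Delta(x\ast y)=\Delta(x)\ast\Delta(y)$ on generators, using the presentation of $\mathcal{D}\mathcal{H}_q^{ec}(\A)$ given in Proposition~\ref{esubalgebra}. By $\ZZ[v,v^{-1}]$-linearity this reduces to taking $x,y$ in the three generating families $\{u_{P[1]}\}_{P\in\P}$, $\{u_M\}_{M\in\A}$, $\{u_{I[-1]}\}_{I\in\I}$. I would start by recording the restrictions of $\Delta$ to generators, which from (\ref{ecyucheng}) collapse to
\[
\Delta(u_{I[-1]})=u_{I[-1]}\otimes 1,\qquad \Delta(u_{P[1]})=1\otimes u_{P[1]},\qquad \Delta(u_L)=\sum_{[M],[N]}q^{\lr{\hat M,\hat N}}F_{MN}^L\,u_M\otimes u_N.
\]

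Next I would dispose of the commutative relations (\ref{ecr5}), (\ref{ecr1}), (\ref{ecr7}) and the concatenation relations (\ref{ecr6}), (\ref{ecr2}) by direct expansion, tracking only the quadratic exponent $x_0$ in (\ref{ectwist}). These cases are essentially trivial because $\Delta(u_{I[-1]})$ lives in the first tensor slot and $\Delta(u_{P[1]})$ in the second, so all cross-contributions in $x_0$ degenerate. The case (\ref{ecr4}), $u_L\ast u_{L'}$ with $L,L'\in\A$, reduces to Green's theorem, which asserts that the Ringel--Hall algebra equipped with the comultiplication $\Delta(u_L)=\sum q^{\lr{\hat M,\hat N}}F_{MN}^L u_M\otimes u_N$ is a twisted bialgebra; I would cite this classical fact rather than re-prove it.

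The substantial remaining cases are the mixing relations (\ref{ecr8}) and (\ref{ecr3}), which are dual to each other under $\P\leftrightarrow \I$. Take (\ref{ecr8}): expanding $\Delta(u_{P[1]}\ast u_M)$ via (\ref{ecr8}) followed by (\ref{ecyucheng}) produces a sum over quadruples $([F],[P'],[X],[Y])$ where $F$ is a cokernel datum of a morphism $P\to M$ and $(X,Y)$ is a Ringel--Hall filtration of $F$. Expanding $\Delta(u_{P[1]})\ast\Delta(u_M)=(1\otimes u_{P[1]})\ast\sum q^{\lr{\hat X,\hat Y}}F_{XY}^M(u_X\otimes u_Y)$ via (\ref{ectwist}) then (\ref{ecr8}) again gives a sum over $([X],[Y],[P''])$ with analogous morphism data from $P\to X$. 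The required bijection between the two parameter sets comes from projectivity of $P$: any morphism $P\to X$ lifts along the fixed epimorphism $M\twoheadrightarrow X$ coming from the filtration $0\to Y\to M\to X\to 0$, and the difference of two such lifts contributes a morphism $P\to Y$; this matches the two indexing schemes up to a factor of $|\Hom(P,Y)|$.

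The main obstacle is matching the quadratic exponents coming from three sources: the twist factor $x_0$ in (\ref{ectwist}), the factor $q^{\lr{\hat M-\hat I,\hat N-\hat P}}$ in (\ref{ecyucheng}), and the Euler-form factors $q^{\pm\lr{P,M}}$ from (\ref{ecr8}) and (\ref{ecr3}). Once the combinatorial bijection above is in hand, the exponent matching follows from additivity of the Euler form along short exact sequences together with the specialised identities $\lr{P,-}=\dim\Hom(P,-)$ and $\lr{-,I}=\dim\Hom(-,I)$ valid on $\A$ by projectivity of $P\in\P$ and injectivity of $I\in\I$ (using that $\A$ is hereditary). Verifying the exponent equality in (\ref{ecr8}) is the technical heart of the proof; the dual case (\ref{ecr3}) is formally identical.
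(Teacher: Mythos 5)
Your proposal takes a genuinely different route. The paper's argument is a reduction: the restriction of $\Delta$ to $\mathcal{D}\mathcal{H}_q^{c_1}(\A)$ is already known to be an algebra homomorphism by \cite[Proposition 8.5]{FPZ}, covering relations \eqref{ecr5}--\eqref{ecr4} including the hard case \eqref{ecr8}; by duality the restriction to $\mathcal{D}\mathcal{H}_q^{c_2}(\A)$ handles \eqref{ecr4}--\eqref{ecr3}; and the only fresh check is the single cross-relation \eqref{ecr7}, which is immediate since $\Delta(u_{I[-1]})=u_{I[-1]}\otimes 1$ and $\Delta(u_{P[1]})=1\otimes u_{P[1]}$. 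You instead verify all eight relations from scratch, which amounts to re-proving \cite[Prop.~8.5]{FPZ} and its dual rather than invoking them. That is more self-contained but substantially more work, and your triage of the relations into trivial ones, the Green's-theorem case \eqref{ecr4}, and the two genuinely hard mixing relations \eqref{ecr8}, \eqref{ecr3} identifies exactly the structure the paper packages into the citation.

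In the hard case \eqref{ecr8} your sketch has a slip worth flagging. With $\Delta(u_M)=\sum_{[X],[Y]}q^{\lr{\hat X,\hat Y}}F_{XY}^M\,u_X\otimes u_Y$, the paper's convention for $F_{XY}^M$ makes $Y$ the subobject and $X$ the quotient of $M$, and the factor $u_{P[1]}$ sits in the second tensor slot, so on the right-hand side it produces morphism data $P\to Y$, not $P\to X$ as you state. Accordingly the bijection is not a lift of $P\to X$ along $M\twoheadrightarrow X$; one instead matches pairs $(f\colon P\to M,\ Y_0\subseteq\Coker f)$ with pairs $(Y\subseteq M,\ g\colon P\to Y)$ via the observation that $f$ factors through $Y=\Ker(M\twoheadrightarrow X)$ exactly when the composite $P\to M\to X$ vanishes, projectivity of $P$ entering through the exactness of $\Hom(P,-)$ applied to $0\to Y\to M\to X\to 0$. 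Even with the bijection straightened out, you explicitly defer the exponent matching; that computation is the real cost of the direct route and is precisely what the paper's citation to \cite{FPZ} sidesteps.
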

\begin{proof} By \cite[Proposition 8.5]{FPZ}, the restriction of $\Delta$ to $\mathcal {D}\mathcal {H}_q^{c_1}(\A)$ is a homomorphism of algebras. Dually, its restriction to $\mathcal {D}\mathcal {H}_q^{c_2}(\A)$ is also a homomorphism of algebras. Thus, it suffices to check that $\Delta$ preserves the relation (\ref{ecr7}). This is immediate.
\end{proof}
As we have mentioned in the proof of Proposition \ref{ecp}, we have the following
\begin{corollary}{\rm(\cite[Proposition 8.5]{FPZ})}\label{c1p}
The map $\Delta:(\mathcal {D}\mathcal {H}_q^{c_1}(\A),\ast)\longrightarrow (\mathcal {D}\mathcal {H}_q^{c_1}(\A)\otimes\mathcal {D}\mathcal {H}_q^{c_1}(\A),\ast)$ is a homomorphism of algebras.
\end{corollary}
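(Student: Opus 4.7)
The plan is to verify the algebra-homomorphism property $\Delta(u_X \ast u_Y) = \Delta(u_X) \ast \Delta(u_Y)$ directly on a generating set. The restriction of Proposition~\ref{twistderived} to the extension-closed subcategory $C_\A^1 \subset D^b(\A)$ shows that $\mathcal{D}\mathcal{H}_q^{c_1}(\A)$ is presented by the generators $\{u_M, u_{P[1]} : M \in \A, P \in \P\}$ modulo the four relations \eqref{ecr5}, \eqref{ecr6}, \eqref{ecr8} and \eqref{ecr4}, so it suffices to check the identity in the four cases where $(X,Y)$ ranges over pairs of these generators. Two specialisations of \eqref{ecyucheng} are used throughout: $\Delta(u_M) = \sum_{[M_1],[M_2]} q^{\lr{\hat{M_1},\hat{M_2}}} F_{M_1 M_2}^M (u_{M_1} \otimes u_{M_2})$ (taking $I = 0$ and $P = 0$), and $\Delta(u_{P[1]}) = 1 \otimes u_{P[1]}$ (taking $I = L = 0$, since $F_{M_1 M_2}^0$ forces $M_1 = M_2 = 0$).

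For $X, Y \in \A$, the product on the tensor side specialises to the standard twisted comultiplication of the Ringel-Hall algebra, and the compatibility reduces to Green's bialgebra formula for a hereditary abelian category, which I would invoke as a classical theorem. For $X = P[1]$, $Y = Q[1]$, both sides equal $1 \otimes u_{(P \oplus Q)[1]}$ after a short computation showing that the twist $x_0$ of \eqref{ectwist} vanishes on this data. The mixed case $X = M$, $Y = P[1]$ is another direct calculation: expanding $\Delta(u_{M \oplus P[1]})$ via \eqref{ecyucheng} produces the coefficient $q^{\lr{\hat{M_1}, \hat{M_2} - \hat{P}}}$, while $\Delta(u_M) \ast (1 \otimes u_{P[1]})$ acquires an additional twist $q^{-\lr{\hat{M_1}, \hat{P}}}$ from \eqref{ectwist} beyond the factor $q^{\lr{\hat{M_1}, \hat{M_2}}}$ already carried by $\Delta(u_M)$, and the two expressions then agree termwise.

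The main obstacle is the remaining mixed case $\Delta(u_{P[1]} \ast u_M) = \Delta(u_{P[1]}) \ast \Delta(u_M)$. On the left-hand side, relation \eqref{ecr8} first produces a sum indexed by $([F],[P'])$ weighted by $|{}_{P'}\Hom_\A(P,M)_F|$, and then \eqref{ecyucheng} applied to each $u_{F \oplus P'[1]}$ introduces Ringel-Hall numbers $F_{F_1 F_2}^F$. On the right-hand side, one first expands $\Delta(u_M)$ into a sum over short exact sequences $0 \to M_2 \to M \to M_1 \to 0$, then multiplies the second tensor factor $u_{M_2}$ by $u_{P[1]}$ via \eqref{ecr8}, producing Hom-counts $|{}_{P'_2}\Hom_\A(P, M_2)_{F_2}|$ together with a further twist from \eqref{ectwist}. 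Matching coefficients of a fixed tensor monomial $(u_{M_1} \otimes u_{F_2 \oplus P'_2[1]})$ boils down to a counting identity: pairs (morphism $f \colon P \to M$, submodule of $M$) must be reparametrised via pairs (submodule of $M$, morphism into the quotient). The hereditary hypothesis enters crucially here, since it guarantees that $\Ker f$ is projective and therefore splits off as a direct summand of $P$; this is what allows $P'$ to appear as a free parameter in the parametrisation. Carrying through this bijection and verifying that all Euler-form twists cancel correctly is the real combinatorial content of the corollary and the step that would occupy the bulk of any written-out proof; this is precisely what is done in Proposition~8.5 of \cite{FPZ}.
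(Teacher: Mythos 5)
The paper treats this statement purely as a citation: Corollary~\ref{c1p} is labelled {\rm(\cite[Proposition 8.5]{FPZ})} and is invoked in the proof of Proposition~\ref{ecp} without any argument given here, so there is no internal proof to compare against. Your overall strategy --- verify the homomorphism identity on the generating relations \eqref{ecr5}--\eqref{ecr4}, using the specialisations $\Delta(u_M)=\sum q^{\lr{\hat M_1,\hat M_2}}F_{M_1M_2}^M(u_{M_1}\otimes u_{M_2})$ and $\Delta(u_{P[1]})=1\otimes u_{P[1]}$ --- is a sensible and standard way to prove such a statement, and your computations of the twist $x_0$ in the easy cases $(P[1],Q[1])$ and $(M,P[1])$ are correct.

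However, there is a factual error in your treatment of the hard case $\Delta(u_{P[1]}\ast u_M)$. You assert that the hereditary hypothesis ``guarantees that $\Ker f$ is projective and therefore splits off as a direct summand of $P$.'' The first part is right, but the ``therefore'' does not follow and is false in general. Take $Q\colon 1\to 2$ and $f\colon P_1\twoheadrightarrow S_1$ the natural surjection; then $\Ker f=S_2=P_2$ is projective, yet $P_1$ is indecomposable, so $\Ker f$ is not a direct summand of $P_1$. The role of heredity here is only to ensure $\Ker f\in\P$, i.e.\ that the term $u_{F\oplus P'[1]}$ in relation \eqref{ecr8} actually lies in $\mathcal{D}\mathcal{H}_q^{c_1}(\A)$; no splitting is true or needed. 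The underlying reparametrisation that makes the coefficients match is simply the bijection between (i) a morphism $f\colon P\to M$ together with a subobject $U\subseteq\Coker f$, and (ii) a submodule $N\subseteq M$ with $\im f\subseteq N$ (take $N$ to be the preimage of $U$) together with the corestriction $g\colon P\to N$ of $f$; this gives $\Ker g=\Ker f$, $\Coker g=U$, and $M/N\cong(\Coker f)/U$, with no splitting required. Two smaller corrections: first, the pairs on the left-hand side involve a subobject of $\Coker f$ (equivalently a submodule of $M$ containing $\im f$), not an arbitrary ``submodule of $M$''; second, with the paper's convention $F_{MN}^L$ placing the subobject in the second tensor slot, the twisted product $(1\otimes u_{P[1]})\ast(u_{M_1}\otimes u_{M_2})$ lands $u_{P[1]}$ against the \emph{submodule} factor $u_{M_2}$, so ``morphism into the quotient'' should read ``morphism into the submodule.''
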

\begin{corollary}\label{c2p}
The map $\Delta:(\mathcal {D}\mathcal {H}_q^{c_2}(\A),\ast)\longrightarrow (\mathcal {D}\mathcal {H}_q^{c_2}(\A)\otimes\mathcal {D}\mathcal {H}_q^{c_2}(\A),\ast)$ is a homomorphism of algebras.
\end{corollary}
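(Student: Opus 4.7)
The plan is to deduce this corollary from Corollary \ref{c1p} via an opposite-algebra duality, as hinted by the word ``dually'' in the proof of Proposition \ref{ecp}. Let $A^{\mathrm{op}}$ denote the opposite algebra, which is again finite dimensional hereditary over $k$, and let $\mathcal{A}^{\mathrm{op}}$ be the category of finite dimensional left $A^{\mathrm{op}}$-modules. The $k$-linear duality $D=\Hom_k(-,k)$ gives an exact contravariant equivalence $D\colon\A\to\mathcal{A}^{\mathrm{op}}$ that interchanges projectives with injectives, hence $\P_\A\leftrightarrow\mathcal{I}_{\mathcal{A}^{\mathrm{op}}}$ and $\I_\A\leftrightarrow\mathcal{P}_{\mathcal{A}^{\mathrm{op}}}$. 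Extending to bounded derived categories via $D(X[n])=D(X)[-n]$, the subcategory $C_\A^2$ is sent bijectively onto $C_{\mathcal{A}^{\mathrm{op}}}^1$.

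Next I would check that $D$ induces a $\ZZ[v,v^{-1}]$-linear map
\begin{equation*}
\phi\colon\mathcal{D}\mathcal{H}_q^{c_2}(\A)\longrightarrow\mathcal{D}\mathcal{H}_q^{c_1}(\mathcal{A}^{\mathrm{op}}),\qquad \phi(u_{X_\bullet})=u_{D(X_\bullet)},
\end{equation*}
which is an algebra anti-isomorphism. This rests on the standard identities $F^L_{MN}=F^{D(L)}_{D(N)\,D(M)}$ for Ringel-Hall numbers and $\lr{M,N}_\A=\lr{D(N),D(M)}_{\mathcal{A}^{\mathrm{op}}}$ for Euler forms, which together transform each of the defining relations (\ref{ecr4}), (\ref{ecr1}), (\ref{ecr2}), (\ref{ecr3}) of $\mathcal{D}\mathcal{H}_q^{c_2}(\A)$ into the reversed form of (\ref{ecr4}), (\ref{ecr5}), (\ref{ecr6}), (\ref{ecr8}) in $\mathcal{D}\mathcal{H}_q^{c_1}(\mathcal{A}^{\mathrm{op}})$. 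A parallel computation with (\ref{ecyucheng}) and the twisted tensor product (\ref{ectwist}) then shows that $\phi$ intertwines the comultiplications up to a swap of tensor factors,
\begin{equation*}
(\phi\otimes\phi)\circ\mathrm{sw}\circ\Delta_\A \;=\; \Delta_{\mathcal{A}^{\mathrm{op}}}\circ\phi,
\end{equation*}
where $\mathrm{sw}(a\otimes b)=b\otimes a$. Since $\Delta_{\mathcal{A}^{\mathrm{op}}}$ is an algebra homomorphism by Corollary \ref{c1p} applied to $\mathcal{A}^{\mathrm{op}}$, pulling back through $\phi$ (which reverses multiplications on both source and target, so the two reversals cancel) yields the desired homomorphism property of $\Delta_\A$ on $\mathcal{D}\mathcal{H}_q^{c_2}(\A)$.

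The main obstacle is the bookkeeping of the various $q$-power twists: the exponent $\lr{\hat{M}-\hat{I},\hat{N}-\hat{P}}$ appearing in (\ref{ecyucheng}) and the exponent $x_0$ in (\ref{ectwist}) must be matched, term by term, against their $\mathcal{A}^{\mathrm{op}}$-analogues after swapping the two tensor slots and invoking $\lr{M,N}_\A=\lr{D(N),D(M)}_{\mathcal{A}^{\mathrm{op}}}$. A more self-contained alternative would be to imitate \cite[Proposition 8.5]{FPZ} directly and check case-by-case that $\Delta$ preserves each defining relation of $\mathcal{D}\mathcal{H}_q^{c_2}(\A)$; only (\ref{ecr3}) requires real work, and its verification is formally dual to the computation for (\ref{ecr8}) in the $c_1$ setting.
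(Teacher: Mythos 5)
Your proposal is correct and is essentially a fully-spelled-out version of the paper's own argument: the paper dismisses this corollary with a single word (``dually'') inside the proof of Proposition~\ref{ecp}, and what you have written is exactly the duality via $D=\Hom_k(-,k)$ passing to $\mathcal{A}^{\mathrm{op}}$, with $F^L_{MN}=F^{D(L)}_{D(N)D(M)}$ and $\lr{M,N}_{\A}=\lr{D(N),D(M)}_{\A^{\mathrm{op}}}$, the swap of tensor factors, and the cancellation of the two anti-homomorphism reversals that the paper implicitly invokes.
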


Now, let us give the integration homomorphisms on the derived Hall subalgebras $\mathcal {D}\mathcal {H}_q^{c_1}(\A)$ and $\mathcal {D}\mathcal {H}_q^{c_2}(\A)$. For each positive integer $t$,
let $\mathcal{T}_t$ be the $\ZZ[v,v^{-1}]$-algebra with the basis $\{X^{\alpha}~|~\alpha\in \mathbb{Z}^t\}$ and the
multiplication given by
\[X^{\alpha}\diamond X^{\beta}=X^{\alpha+\beta}.\]
It is well known that there is an isomorphism of abelian groups
$$f: K(D^b(\A))\longrightarrow K(\A)$$ defined by $f(\hat{X}_\bullet)=\sum\limits_{i\in\mathbb{Z}}(-1)^i\Dim X_i=:\Dim {X}_\bullet.$ Moreover, $\lr{X_\bullet,Y_\bullet}=\lr{\Dim {X}_\bullet,\Dim {Y}_\bullet}.$

We have the integration map \begin{equation}\int:\mathcal {D}\mathcal {H}_q^{ec}(\A)\longrightarrow\mathcal{T}_n,~~u_{X_\bullet}\mapsto X^{\Dim {X}_\bullet}.\end{equation}
In general, it is not a homomorphism of algebras, since the relation (\ref{ecr7}) is not preserved under the map $\int$. However, considering the restrictions to $\mathcal {D}\mathcal {H}_q^{c_i}(\A)$, we have the following


\begin{proposition}\label{jfys}
For each $i=1,2$, the integration map $$\int:\mathcal {D}\mathcal {H}_q^{c_i}(\A)\longrightarrow\mathcal{T}_n,~~u_{X_\bullet}\mapsto X^{\Dim {X}_\bullet}$$
is a homomorphism of algebras.
\end{proposition}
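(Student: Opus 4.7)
The plan is to note that $\int$ is $\ZZ[v,v^{-1}]$-linear by definition, so it suffices to verify that it respects each of the defining relations of $\mathcal{D}\mathcal{H}_q^{c_i}(\A)$ listed in Proposition~\ref{esubalgebra}: for $i=1$, the relations (\ref{ecr5})--(\ref{ecr4}); for $i=2$, the relations (\ref{ecr4}) and (\ref{ecr1})--(\ref{ecr3}). The relation (\ref{ecr7}) is precisely the obstruction that forces us to restrict: integrating $u_{I[-1]}\ast u_{P[1]}$ yields $X^{\hat{P}-\hat{I}}$, while its rewriting $q^{-\lr{P,I}}u_{P[1]}\ast u_{I[-1]}$ integrates to $q^{-\lr{P,I}}X^{\hat{P}-\hat{I}}$, and these would coincide on the full algebra $\mathcal{D}\mathcal{H}_q^{ec}(\A)$ only when $\lr{P,I}=0$. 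Thus the proposition must be stated individually for each $c_i$.

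First I would dispose of the easy relations. Since $\Dim$ is additive on triangles in $D^b(\A)$, relations (\ref{ecr5}), (\ref{ecr6}) and their duals (\ref{ecr1}), (\ref{ecr2}) translate under $\int$ into the trivial identity $X^{\alpha}\diamond X^{\beta}=X^{\alpha+\beta}$ with no scalar to reconcile. For (\ref{ecr4}), every $L$ appearing on the right satisfies $\hat{L}=\hat{M}+\hat{N}$ by additivity along the short exact sequences, so the right-hand side collapses to $q^{\lr{M,N}}\frac{|\Ext^1_{\A}(M,N)|}{|\Hom_{\A}(M,N)|}X^{\hat{M}+\hat{N}}$. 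Since $\A$ is hereditary, $|\Ext^1_{\A}(M,N)|/|\Hom_{\A}(M,N)|=q^{-\lr{M,N}}$, the scalars cancel, and we obtain $X^{\hat{M}+\hat{N}}=\int(u_M)\diamond\int(u_N)$.

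For (\ref{ecr8}), the key observation is that every morphism $f\colon P\to M$ contributing to the sum fits into a four-term exact sequence $0\to P'\to P\to M\to F\to 0$, so additivity of dimension gives $\hat{F}-\hat{P'}=\hat{M}-\hat{P}$. The integrated right-hand side therefore factors as $q^{-\lr{P,M}}|\Hom_{\A}(P,M)|\cdot X^{\hat{M}-\hat{P}}$; projectivity of $P$ forces $\Ext^1_{\A}(P,M)=0$, whence $|\Hom_{\A}(P,M)|=q^{\lr{P,M}}$, the prefactors cancel, and we recover $X^{\hat{M}-\hat{P}}=\int(u_{P[1]})\diamond\int(u_M)$. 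Relation (\ref{ecr3}) is handled symmetrically, using $|\Hom_{\A}(M,I)|=q^{\lr{M,I}}$ for injective $I$.

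I do not anticipate any substantial obstacle: the argument is essentially bookkeeping once one identifies how projectivity of $P$ (respectively injectivity of $I$) makes $|\Hom_{\A}(P,M)|$ (respectively $|\Hom_{\A}(M,I)|$) exponential in the Euler form, thereby neutralising the $q^{-\lr{\cdot,\cdot}}$ twist in (\ref{ecr8}) and (\ref{ecr3}). The only conceptual point worth emphasising is the one already flagged: the same cancellation simply does not occur in (\ref{ecr7}), which is exactly the relation that has been excised in passing to $c_1$ and $c_2$.
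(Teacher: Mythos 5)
Your verification is correct and fills in precisely what the paper delegates to \cite[Proposition 8.8]{FPZ} (plus ``dually'' for $i=2$): one checks each defining relation, using that $\hat L=\hat M+\hat N$ for every middle term of a short exact sequence, that $|\Ext^1_\A(M,N)|/|\Hom_\A(M,N)|=q^{-\lr{M,N}}$ by hereditariness, and that projectivity of $P$ (resp.\ injectivity of $I$) kills $\Ext^1$ so that $|\Hom_\A(P,M)|=q^{\lr{P,M}}$ (resp.\ $|\Hom_\A(M,I)|=q^{\lr{M,I}}$), exactly neutralising the twisting exponents in (\ref{ecr8}) and (\ref{ecr3}).

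One small slip worth correcting: in the aside about (\ref{ecr7}), the image of $u_{I[-1]\oplus P[1]}$ under $\int$ is $X^{-\hat I-\hat P}$, not $X^{\hat P-\hat I}$ (both $I[-1]$ and $P[1]$ contribute with a minus sign, since $\Dim P[1]=-\hat P$ and $\Dim I[-1]=-\hat I$). The conclusion is unchanged --- the obstruction is still the spurious factor $q^{\lr{P,I}}$ arising from $\int(u_{P[1]}\ast u_{I[-1]})=q^{\lr{P,I}}X^{-\hat I-\hat P}$ versus $\int(u_{P[1]})\diamond\int(u_{I[-1]})=X^{-\hat I-\hat P}$ --- but the exponent as written is wrong.
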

\begin{proof}
See the proof for the case when $i=1$ in \cite[Proposition 8.8]{FPZ}, dually, we can prove the statement for $i=2$.
\end{proof}

\section{From Hall algebras to the quantum torus}\label{q-char}
In this section, we apply the comultiplications and integration maps on the subalgebras of the derived Hall algebra to get the formulas of (generalised) quantum cluster characters.

\subsection{Notations in quantum cluster algebras}
Let $Q$ be an acyclic valued quiver (cf. \cite{Rupel1,Rupel2}) with the vertex set $\{1,2,\cdots,n\}$. For each vertex $i$, let $d_i\in\mathbb{N^+}$ be the corresponding valuation. Note that each finite dimensional hereditary $k$-algebra can be obtained by taking the tensor algebra of the $k$-species $\mathfrak{S}$ associated to $Q$. We identity a $k$-species $\mathfrak{S}$ with its corresponding tensor algebra. Let $m\geq n$, we define a new quiver $\widetilde{Q}$ by attaching additional vertices $n+1,\ldots,m$ to $Q$ with the valuations $d_{n+1},\cdots, d_m$, respectively.

For each $1\leq i\leq m$, denote by $S_i$ the $i$-th simple module for $\widetilde{\mathfrak{S}}$ which is the $k$-species associated to $\widetilde{Q}$, and set $\mathcal {D}_i=\End_{\widetilde{\mathfrak{S}}}(S_i)$. Let $R(\widetilde{Q})$ and $R'(\widetilde{Q})$ be the $m\times m$ matrices with the $i$-th row and $j$-th column elements given respectively by
$$r_{ij}=\dim_{\mathcal {D}_i}\Ext_{\widetilde{\mathfrak{S}}}^1(S_j,S_i)$$
and
$$r'_{ij}=\dim_{{\mathcal {D}_i}^{op}}\Ext_{\widetilde{\mathfrak{S}}}^1(S_i,S_j),$$
where $1\leq i,j\leq m$. Define $B(\widetilde{Q})=R'(\widetilde{Q})-R(\widetilde{Q})$,
$E(\widetilde{Q})=I_m-R'(\widetilde{Q})$ and $E'(\widetilde{Q})=I_m-R(\widetilde{Q})$, where $I_m$ is the $m\times m$ identity matrix.

In what follows, we
denote by $\widetilde{R}$, $\widetilde{R}'$, $\widetilde{B}$, $\widetilde{E}$, $\widetilde{E}'$ and $\widetilde{I}$ the left $m\times n$ submatrices of $R(\widetilde{Q})$, $R'(\widetilde{Q})$, $B(\widetilde{Q})$, $E(\widetilde{Q})$, $E'(\widetilde{Q})$ and $I_m$, respectively. For a module $X$, we will always use the corresponding lowercase boldface letter ${\bf x}$
to denote its dimension vector.

From now on, let $\mathcal{A}$ (resp. $\widetilde{\mathcal{A}}$) be the category of finite dimensional left $\mathfrak{S}$ (resp. $\mathfrak{\widetilde{S}}$)-modules. We may identify $\mathcal{A}$ with the full subcategory of $\widetilde{\mathcal{A}}$ consisting of modules with supports on $Q$. For an $\mathfrak{S}$-module $X$, we also denote by $\bf{x}$ the dimension vector of $X$ viewed as an $\widetilde{\mathfrak{S}}$-module, since this should not cause confusion by the context. Thus, $E(\widetilde{Q})\x=\widetilde{E}\x$ and $E'(\widetilde{Q})\x=\widetilde{E}'{\x}$.

In what follows, we always assume that there exists a skew-symmetric $m\times m$ integral matrix $\Lambda$ such that \begin{equation}\Lambda (-B(\widetilde{Q}))=\operatorname{diag}\{d_1,\cdots, d_m\}.\end{equation} In this case,  \begin{align}\label{compatible}
\Lambda(-\widetilde{B})={D_n\choose0},\end{align} where $D_n=\operatorname{diag}\{d_1,\cdots, d_n\}$.

Note that the matrix representing the Euler form associated to $\mathfrak{\widetilde{S}}$ under the standard basis is $E(\widetilde{Q})D_m=D_mE(\widetilde{Q})$, where $D_m=\operatorname{diag}\{d_1,\cdots, d_m\}$. Then it is easy to see the following
\begin{lemma}{\rm(\cite{FPZ})}\label{sjishu} For any $\alpha,\beta\in\mathbb{Z}^m$, we have that
\begin{itemize}
\item[$(1)$]$\Lambda(B(\widetilde{Q})\alpha,E(\widetilde{Q})\beta)=\lr{\alpha,\beta}$;~~$(2)$~~$\Lambda(B(\widetilde{Q})\alpha,E'(\widetilde{Q})\beta)=\lr{\beta,\alpha}$;
\item[$(3)$]$\Lambda(B(\widetilde{Q})\alpha,B(\widetilde{Q})\beta)=\lr{\beta,\alpha}-\lr{\alpha,\beta}$;~~$(4)$~~$\Lambda(E'(\widetilde{Q})\alpha,E'(\widetilde{Q})\beta)=\Lambda(E(\widetilde{Q})\alpha,E(\widetilde{Q})\beta)$.
\end{itemize}\end{lemma}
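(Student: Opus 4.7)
The plan is to express $\Lambda$ as a bilinear form via $\Lambda(x,y)=x^{T}\Lambda y$ and reduce every identity to two inputs: the compatibility hypothesis, rewritten as $B(\widetilde{Q})^{T}\Lambda = D_{m}$, and the fact, recorded just before the lemma, that the Euler form matrix of $\widetilde{\mathfrak{S}}$ in the standard basis equals $C:=D_{m}E(\widetilde{Q})=E(\widetilde{Q})D_{m}$. The first is immediate: transposing $\Lambda(-B(\widetilde{Q}))=D_{m}$ and using $\Lambda^{T}=-\Lambda$ gives $B(\widetilde{Q})^{T}\Lambda=D_{m}^{T}=D_{m}$. With these two inputs, part $(1)$ is a direct computation,
\[
\Lambda(B(\widetilde{Q})\alpha,E(\widetilde{Q})\beta)=\alpha^{T}B(\widetilde{Q})^{T}\Lambda E(\widetilde{Q})\beta=\alpha^{T}D_{m}E(\widetilde{Q})\beta=\alpha^{T}C\beta=\lr{\alpha,\beta}.
\]

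For $(2)$, the same manipulation reduces the claim to the matrix identity $D_{m}E'(\widetilde{Q})=E(\widetilde{Q})^{T}D_{m}=C^{T}$, equivalently $D_{m}R(\widetilde{Q})=R'(\widetilde{Q})^{T}D_{m}$, i.e.\ $d_{i}r_{ij}=d_{j}r'_{ji}$ for all $i,j$. I would establish this by the standard species symmetry argument: both sides compute $\dim_{k}\Ext^{1}_{\widetilde{\mathfrak{S}}}(S_{j},S_{i})$, once as the $\mathcal{D}_{i}$-dimension (giving $r_{ij}$) multiplied by $d_{i}=\dim_{k}\mathcal{D}_{i}$, and once as the $\mathcal{D}_{j}^{op}$-dimension (giving $r'_{ji}$) multiplied by $d_{j}=\dim_{k}\mathcal{D}_{j}$. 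Granting this, $\Lambda(B(\widetilde{Q})\alpha,E'(\widetilde{Q})\beta)=\alpha^{T}D_{m}E'(\widetilde{Q})\beta=\alpha^{T}C^{T}\beta=\lr{\beta,\alpha}$.

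Parts $(3)$ and $(4)$ are then formal consequences. Using $B(\widetilde{Q})=E'(\widetilde{Q})-E(\widetilde{Q})$ and bilinearity,
\[
\Lambda(B(\widetilde{Q})\alpha,B(\widetilde{Q})\beta)=\Lambda(B(\widetilde{Q})\alpha,E'(\widetilde{Q})\beta)-\Lambda(B(\widetilde{Q})\alpha,E(\widetilde{Q})\beta)=\lr{\beta,\alpha}-\lr{\alpha,\beta},
\]
which gives $(3)$. For $(4)$, write $E'(\widetilde{Q})=E(\widetilde{Q})+B(\widetilde{Q})$ and expand:
\[
\Lambda(E'\alpha,E'\beta)-\Lambda(E\alpha,E\beta)=\Lambda(E\alpha,B\beta)+\Lambda(B\alpha,E\beta)+\Lambda(B\alpha,B\beta).
\]
The first summand equals $-\Lambda(B\beta,E\alpha)=-\lr{\beta,\alpha}$ by skew-symmetry and $(1)$; the second equals $\lr{\alpha,\beta}$ by $(1)$; the third equals $\lr{\beta,\alpha}-\lr{\alpha,\beta}$ by $(3)$. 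The three terms cancel, proving $(4)$.

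The only non-routine ingredient, and the place I would take some care, is the species symmetry $d_{i}r_{ij}=d_{j}r'_{ji}$ used in $(2)$; once that is in hand the rest is bookkeeping with the identities $B(\widetilde{Q})=E'(\widetilde{Q})-E(\widetilde{Q})$ and $\Lambda^{T}=-\Lambda$. Since these identities are symmetric in $\alpha,\beta$, I expect items $(2)$--$(4)$ to read as corollaries of $(1)$ together with the skew-symmetry of $\Lambda$.
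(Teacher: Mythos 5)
Your proof is correct and is the routine matrix verification the paper alludes to with ``Then it is easy to see the following'' (the paper itself only cites \cite{FPZ} and gives no argument). The reductions you make are exactly the right ones: transposing the compatibility condition to get $B(\widetilde{Q})^T\Lambda=D_m$, using $C=D_m E(\widetilde{Q})$ for $(1)$, deriving $C^T=D_m E'(\widetilde{Q})=E(\widetilde{Q})^T D_m$ from the bimodule $k$-dimension count $d_i r_{ij}=d_j r'_{ji}$ for $(2)$, and then obtaining $(3)$ and $(4)$ formally from $(1)$--$(2)$, bilinearity, $B(\widetilde{Q})=E'(\widetilde{Q})-E(\widetilde{Q})$, and skew-symmetry of $\Lambda$. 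One tiny imprecision in your closing remark: item $(2)$ is not a corollary of $(1)$ plus skew-symmetry alone --- it genuinely requires the species symmetry $C^T=D_m E'(\widetilde{Q})$, as the body of your argument correctly recognizes; only $(3)$ and $(4)$ follow formally once $(1)$ and $(2)$ are in hand.
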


For the simplicity of notation, in what follows, for each $\mathfrak{S}$-module or $\mathfrak{\widetilde{S}}$-module $X$, we write $^\ast\x=E(\widetilde{Q})\x$ and $\x^\ast=E'(\widetilde{Q})\x$.

\subsection{$\Lambda$-twisted versions}
In order to relate the Hall algebras with the quantum cluster algebras, we need to twist the multiplications of the Hall algebras using the bilinear form $\Lambda$ in the quantum cluster algebras.
So, let us twist the multiplication on $\mathcal {D}\mathcal {H}_q^{ec}(\widetilde{\A})$, and define $\mathcal {D}\mathcal {H}_\Lambda^{ec}(\widetilde{\A})$ to be the same module as $\mathcal {D}\mathcal {H}_q^{ec}(\widetilde{\A})$ but with the twisted
multiplication defined on basis elements by
{\begin{equation}\label{yizhitwisting1}
\begin{split}
   u_{I[-1]\oplus M\oplus P[1]}\star u_{J[-1]\oplus N\oplus Q[1]}:=
   v^{\Lambda(({\bf m}-{\bf i}-{\bf p})^\ast,(\bf{n}
   -{\bf j}-\bf{q})^\ast)}
   u_{I[-1]\oplus M\oplus P[1]}\ast u_{J[-1]\oplus N\oplus Q[1]},\end{split}
\end{equation}}where $M, N\in\widetilde{\A}$, $I, J\in\I_{\widetilde{\A}}$ and $P, Q\in\P_{\widetilde{\A}}$. We also twist the multiplication on the tensor algebra $(\mathcal {D}\mathcal {H}_q^{ec}(\widetilde{\A})\otimes\mathcal {D}\mathcal {H}_q^{ec}(\widetilde{\A}),\ast)$ by defining
\begin{equation*}\begin{split}(u_{I[-1]\oplus M\oplus P[1]}\otimes u_{J[-1]\oplus N\oplus Q[1]})\star(u_{I'[-1]\oplus M'\oplus P'[1]}\otimes u_{J'[-1]\oplus N'\oplus Q'[1]}):=\\
v^{\lambda}
(u_{I[-1]\oplus M\oplus P[1]}\otimes u_{J[-1]\oplus N\oplus Q[1]})\ast(u_{I'[-1]\oplus M'\oplus P'[1]}\otimes u_{J'[-1]\oplus N'\oplus Q'[1]}),
\end{split}
\end{equation*}where $\lambda=\Lambda(({\bf m}-{\bf i}-{\bf p}+{\bf n}-{\bf j}-{\bf q})^\ast,({\bf m}'-{\bf i}'-{\bf p}'+{\bf n}'-{\bf j}'-{\bf q}')^\ast)$, $M, N,M',N'\in\widetilde{\A}$, $I, J,I',J'\in\I_{\widetilde{\A}}$ and $P, Q,P',Q'\in\P_{\widetilde{\A}}$. Then, for each $i=1,2$, we have the subalgebra $\mathcal {D}\mathcal {H}_\Lambda^{c_i}(\widetilde{\A})$ of $\mathcal {D}\mathcal {H}_\Lambda^{ec}(\widetilde{\A})$ corresponding to $\mathcal {D}\mathcal {H}_q^{c_i}(\widetilde{\A})$ and the twisted multiplication $\star$ on $\mathcal {D}\mathcal {H}_q^{c_i}(\widetilde{\A})\otimes\mathcal {D}\mathcal {H}_q^{c_i}(\widetilde{\A})$.

Let us reformulate Proposition \ref{esubalgebra} as the following
\begin{proposition}\label{elsubalgebra}
The subalgebra $\mathcal {D}\mathcal {H}_\Lambda^{ec}(\widetilde{\A})$ is generated by the elements $\{u_{P[1]},u_{M},u_{I[-1]}~|~P\in\P_{\widetilde{\A}},M\in\widetilde{\A},I\in\I_{\widetilde{\A}}\}$, and the following relations
\begin{flalign}
&u_{P[1]}\star u_{Q[1]}=q^{\frac{1}{2}\Lambda({\bf p}^\ast,{\bf q}^\ast)}
u_{(P\oplus Q)[1]}
=q^{\Lambda({\bf p}^\ast,{\bf q}^\ast)}u_{Q[1]}\star u_{P[1]};\label{lgx2}\\
&u_{M}\star u_{P[1]}=q^{-\frac{1}{2}\Lambda({\bf m}^\ast,{\bf p}^\ast)}u_{M\oplus
P[1]};\label{lgx5}\\
&u_{P[1]}\star u_{M}=q^{-\frac{1}{2}\Lambda({\bf p}^\ast,{\bf m}^\ast)-\lr{{\bf p},{\bf m}}}
\sum\limits_{[F],[P']}|{}_{P'}\Hom_{\widetilde{\A}}(P,M)_F|u_{F\oplus P'[1]};\label{lgx7}\\
&u_{M}\star u_{N}=q^{\frac{1}{2}\Lambda({\bf m}^\ast,{\bf n}^\ast)+\lr{{\bf m},{\bf n}}}\sum_{[L]}\frac{|\mathrm{Ext}_{\widetilde{\A}}^{1}(M,N)_{L}|}{|\mathrm{Hom}_{\widetilde{\A}}(M,N)|}u_L;\label{lgx3}\\
&u_{I[-1]}\star u_{J[-1]}=
q^{\frac{1}{2}\Lambda({\bf i}^\ast,{\bf j}^\ast)}u_{(I\oplus J)[-1]}
=q^{\Lambda({\bf i}^\ast,{\bf j}^\ast)}u_{J[-1]}\star u_{I[-1]};\label{lgx1}\\
&u_{I[-1]}\star u_{M}=q^{-\frac{1}{2}\Lambda({\bf i}^\ast,{\bf m}^\ast)}u_{M\oplus
I[-1]};\label{lgx4}\\
&u_{M}\star u_{I[-1]}=q^{-\frac{1}{2}\Lambda({\bf m}^\ast,{\bf i}^\ast)-\lr{{\bf m},{\bf i}}}
\sum\limits_{[G],[I']}|{}_{G}\Hom_{\widetilde{\A}}(M,I)_{I'}|u_{G\oplus I'[-1]};\label{lgx6}\\
&u_{I[-1]}\star u_{P[1]}=q^{\Lambda({\bf i}^\ast,{\bf p}^\ast)-\lr{{\bf p},{\bf i}}}u_{P[1]}\star u_{I[-1]}=q^{\frac{1}{2}\Lambda({\bf i}^\ast,{\bf p}^\ast)}u_{I[-1]\oplus P[1]};\label{lgx8}\end{flalign}
for any $P,Q\in\P_{\widetilde{\A}}$, $M,N\in\widetilde{\A}$ and $I,J\in\I_{\widetilde{\A}}$.
\end{proposition}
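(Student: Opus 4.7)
The plan is to deduce each of the eight relations (\ref{lgx2})--(\ref{lgx8}) directly from the corresponding untwisted identity in Proposition~\ref{esubalgebra} by inserting the appropriate power of $v$ prescribed by the twisting formula (\ref{yizhitwisting1}). Since $\mathcal{DH}_\Lambda^{ec}(\widetilde{\A})$ agrees with $\mathcal{DH}_q^{ec}(\widetilde{\A})$ as a $\ZZ[v,v^{-1}]$-module, once all defining relations are verified, the fact that the listed elements generate $\mathcal{DH}_\Lambda^{ec}(\widetilde{\A})$ is inherited from Proposition~\ref{esubalgebra}. So the task reduces to a straightforward case-by-case bookkeeping of twist factors.

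First I will record the key identity: for any basis element $u_{I[-1]\oplus M\oplus P[1]}$, the image in $K(D^b(\widetilde{\A}))$ under the map used in (\ref{yizhitwisting1}) is $({\bf m}-{\bf i}-{\bf p})^\ast$. Thus for the eight pairs appearing in Proposition~\ref{esubalgebra}, the corresponding $\Lambda$-exponents are, respectively, $\Lambda(-{\bf p}^\ast,-{\bf q}^\ast)$, $\Lambda({\bf m}^\ast,-{\bf p}^\ast)$, $\Lambda(-{\bf p}^\ast,{\bf m}^\ast)$, $\Lambda({\bf m}^\ast,{\bf n}^\ast)$, $\Lambda(-{\bf i}^\ast,-{\bf j}^\ast)$, $\Lambda(-{\bf i}^\ast,{\bf m}^\ast)$, $\Lambda({\bf m}^\ast,-{\bf i}^\ast)$ and $\Lambda(-{\bf i}^\ast,-{\bf p}^\ast)$, and by the skew-symmetry of $\Lambda$ each simplifies to the power of $v$ displayed on the right-hand side (recalling $q = v^2$). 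Substituting into (\ref{ecr5})--(\ref{ecr7}) yields the first equality in each of (\ref{lgx2})--(\ref{lgx8}).

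For the second equalities in (\ref{lgx2}), (\ref{lgx1}) and (\ref{lgx8}), which compare $u_X\star u_Y$ with $u_Y\star u_X$, I will compute both sides independently via the first equalities (and, in the case of (\ref{lgx8}), via the untwisted commutation $u_{P[1]}\ast u_{I[-1]} = q^{\lr{{\bf p},{\bf i}}} u_{I[-1]\oplus P[1]}$ extracted from (\ref{ecr7})). The ratio of the two sides then reduces to $q^{\Lambda({\bf p}^\ast,{\bf q}^\ast)}$, $q^{\Lambda({\bf i}^\ast,{\bf j}^\ast)}$ and $q^{\Lambda({\bf i}^\ast,{\bf p}^\ast)-\lr{{\bf p},{\bf i}}}$ respectively, matching the statement.

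The only step with any subtlety is the commutation (\ref{lgx8}) between an injective shift and a projective shift, since there the twist factor must absorb the genuine Euler form contribution $-\lr{{\bf p},{\bf i}}$ coming from (\ref{ecr7}); I will verify this purely by arithmetic in $\Lambda$ and $\lr{-,-}$, with no appeal to Lemma~\ref{sjishu} needed. All other cases are immediate from the definition of $\star$. This completes the reformulation.
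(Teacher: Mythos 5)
Your proposal is correct and matches the paper's approach, which simply states Proposition~\ref{elsubalgebra} as a ``reformulation'' of Proposition~\ref{esubalgebra} without spelling out the twist-factor bookkeeping you supply; the paper leaves exactly this computation implicit. Your individual calculations check out: each $\star$-product differs from the corresponding $\ast$-product in Proposition~\ref{esubalgebra} by $v^{\Lambda(({\bf m}-{\bf i}-{\bf p})^\ast,({\bf n}-{\bf j}-{\bf q})^\ast)}$, and substitution into (\ref{ecr5})--(\ref{ecr7}) yields (\ref{lgx2})--(\ref{lgx8}) with the stated powers of $q=v^2$. (One small imprecision: the simplifications like $\Lambda(-{\bf p}^\ast,-{\bf q}^\ast)=\Lambda({\bf p}^\ast,{\bf q}^\ast)$ use only bilinearity of $\Lambda$; skew-symmetry enters only when verifying the second equalities in (\ref{lgx2}), (\ref{lgx1}) and (\ref{lgx8}). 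This does not affect the validity of the argument.)
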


The subalgebras $\mathcal {D}\mathcal {H}_\Lambda^{c_1}(\widetilde{\A})$ and $\mathcal {D}\mathcal {H}_\Lambda^{c_2}(\widetilde{\A})$ are subject to the relations (\ref{lgx2}-\ref{lgx3}) and (\ref{lgx3}-\ref{lgx6}), respectively.

Since the comultiplications $\Delta$ defined on $\mathcal {D}\mathcal {H}_q^{c_1}(\widetilde{\A})$ and $\mathcal {D}\mathcal {H}_q^{c_2}(\widetilde{\A})$ are both homogeneous, using Corollaries \ref{c1p} and \ref{c2p}, we obtain the following
\begin{lemma}\label{twyucheng12}
For each $i=1,2$, the map $\Delta:(\mathcal {D}\mathcal {H}_\Lambda^{c_i}(\widetilde{\A}),\star)\longrightarrow (\mathcal {D}\mathcal {H}_q^{c_i}(\widetilde{\A})\otimes\mathcal {D}\mathcal {H}_q^{c_i}(\widetilde{\A}),\star)$ is a homomorphism of algebras.
\end{lemma}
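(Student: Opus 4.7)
The plan is to reduce to the already-established untwisted version: by Corollaries \ref{c1p} and \ref{c2p}, the comultiplication $\Delta$ is an algebra homomorphism from $(\mathcal{D}\mathcal{H}_q^{c_i}(\widetilde{\A}),\ast)$ to $(\mathcal{D}\mathcal{H}_q^{c_i}(\widetilde{\A})\otimes\mathcal{D}\mathcal{H}_q^{c_i}(\widetilde{\A}),\ast)$, so the only thing to check is that the twist factors relating $\star$ to $\ast$ behave correctly under $\Delta$.

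First I would make precise the homogeneity of $\Delta$ with respect to the Grothendieck group of $D^b(\widetilde{\A})$. From the defining formula \eqref{ecyucheng}, each term $u_{M\oplus I[-1]}\otimes u_{N\oplus P[1]}$ appearing in $\Delta(u_{I[-1]\oplus L\oplus P[1]})$ satisfies $\widehat{M\oplus I[-1]}+\widehat{N\oplus P[1]}=\hat{M}-\hat{I}+\hat{N}-\hat{P}$, and this equals $\widehat{I[-1]\oplus L\oplus P[1]}=-\hat{I}+\hat{L}-\hat{P}$ whenever $F_{MN}^L\neq 0$ (since then $\hat{L}=\hat{M}+\hat{N}$). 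So $\Delta$ preserves total Grothendieck-group degree, and the same conclusion applies to its restrictions to the $c_1$ and $c_2$ parts.

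Next, the twist scalar in $u_X\star u_Y=v^{\Lambda(\hat{X}^\ast,\hat{Y}^\ast)}u_X\ast u_Y$ depends only on $\hat{X},\hat{Y}\in K(D^b(\widetilde{\A}))$, and similarly for the tensor product twist, which depends only on the total Grothendieck classes of each tensor factor. Combining these two facts, on one hand
\[
\Delta(u_X\star u_Y)=v^{\Lambda(\hat{X}^\ast,\hat{Y}^\ast)}\,\Delta(u_X\ast u_Y)=v^{\Lambda(\hat{X}^\ast,\hat{Y}^\ast)}\,\Delta(u_X)\ast\Delta(u_Y),
\]
using the $\ast$-version of the homomorphism property. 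On the other hand, writing $\Delta(u_X)=\sum c_{A,B}(u_A\otimes u_B)$ with $\hat{A}+\hat{B}=\hat{X}$ and $\Delta(u_Y)=\sum c_{A',B'}(u_{A'}\otimes u_{B'})$ with $\hat{A'}+\hat{B'}=\hat{Y}$, each term of $\Delta(u_X)\star\Delta(u_Y)$ picks up the scalar $v^{\Lambda((\hat{A}+\hat{B})^\ast,(\hat{A'}+\hat{B'})^\ast)}=v^{\Lambda(\hat{X}^\ast,\hat{Y}^\ast)}$, which factors out of the sum. Thus both sides agree.

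The main obstacle is really only the bookkeeping that the Grothendieck-degree of each summand in $\Delta(u_X)$ adds to $\hat{X}$; once that is in hand, the twist scalar is a common factor on both sides of the required identity and the statement follows immediately from the $\ast$-homomorphism property. Since this argument is insensitive to whether one works with the $c_1$-subalgebra or the $c_2$-subalgebra, both cases $i=1,2$ are handled uniformly.
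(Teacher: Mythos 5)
Your proof is correct and is precisely the content behind the paper's one-line justification (``Since the comultiplications $\Delta$ \ldots are both homogeneous, using Corollaries \ref{c1p} and \ref{c2p}, we obtain the following''); you have simply spelled out the homogeneity computation and the factoring of the common twist scalar $v^{\Lambda(\hat{X}^\ast,\hat{Y}^\ast)}$ on both sides.
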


We also need to twist the multiplication on the tensor algebra of the torus $\mathcal{T}_m$ by defining
\begin{equation}(X^\alpha\otimes X^\beta)\star (X^\gamma\otimes X^\delta):=q^{\frac{1}{2}\Lambda((\alpha+\beta)^\ast, (\gamma+\delta)^\ast)+(\beta, \gamma)+\langle \alpha, \delta\rangle}X^{\alpha+\gamma}\otimes X^{\beta+\delta}
\end{equation} for any $\alpha, \beta, \gamma, \delta\in\mathbb{Z}^m$.
\begin{lemma}\label{twjf12}
For each $i=1,2$, the map $\int\otimes\int: (\mathcal {D}\mathcal {H}_q^{c_i}(\widetilde{\A})\otimes\mathcal {D}\mathcal {H}_q^{c_i}(\widetilde{\A}),\star)\longrightarrow(\mathcal{T}_m\otimes\mathcal{T}_m,\star)$ is a homomorphism of algebras.
\end{lemma}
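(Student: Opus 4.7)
The plan is to reduce the claim to Proposition~\ref{jfys} combined with a direct matching of two scalar twist factors. By Proposition~\ref{jfys}, the integration map $\int\colon(\mathcal{D}\mathcal{H}_q^{c_i}(\widetilde{\A}),\ast)\to(\mathcal{T}_m,\diamond)$ is already an algebra homomorphism for each $i=1,2$, so on the ``componentwise'' product $\ast$ of the tensor square defined in~(\ref{ectwist}) the map $\int\otimes\int$ is automatically multiplicative up to the scalar prefactor $q^{x_0}$ built into that definition. Since the $\star$-multiplication on the Hall tensor product inserts only the further scalar $v^{\lambda}$, while the $\star$-multiplication on $\mathcal{T}_m\otimes\mathcal{T}_m$ is defined by a single explicit scalar, the whole problem reduces, after checking on basis elements and extending by linearity, to verifying that $v^{\lambda}q^{x_0}$ agrees with the torus-side scalar once basis vectors are translated into their dimension vectors.

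The main step is the following translation. I would fix basis elements $u_A\otimes u_B$ and $u_C\otimes u_D$ with $A,B,C,D\in C^i_{\widetilde{\A}}$, and set $\alpha=\Dim A$, $\beta=\Dim B$, $\gamma=\Dim C$, $\delta=\Dim D$ in $\mathbb{Z}^m$. Using the sign rule $\Dim(X[\pm1])=-\hat{X}$, the combinations $\mathbf{m}-\mathbf{i}-\mathbf{p}$ and $\mathbf{n}-\mathbf{j}-\mathbf{q}$ appearing in~(\ref{ectwist}) and~(\ref{yizhitwisting1}) become exactly $\alpha$ and $\beta$, and similarly $\gamma,\delta$ for the primed objects, the absent shifted direct summand dropping out when $i=1$ or $i=2$. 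Substituting this identification, the combined Hall-side twist $v^{\lambda}q^{x_0}$ becomes $q^{\frac{1}{2}\Lambda((\alpha+\beta)^\ast,(\gamma+\delta)^\ast)+(\beta,\gamma)+\langle\alpha,\delta\rangle}$, which is precisely the scalar defining $(X^\alpha\otimes X^\beta)\star(X^\gamma\otimes X^\delta)$ on the torus side. Combined with Proposition~\ref{jfys} applied to each tensor factor, this yields the required identity of products.

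I do not foresee a genuine obstacle here: the two twisting conventions have been engineered precisely so their scalars coincide under $\int\otimes\int$, and the only real care required is the bookkeeping with the sign rule for shifted summands together with confirming that the reduction to $C^1_{\widetilde{\A}}$ or $C^2_{\widetilde{\A}}$ (where one of the two subtracted dimension components is simply absent) goes through cleanly. A full write-up would spell out the case $i=1$ in detail, with $i=2$ being entirely dual in view of the symmetric roles of $\I_{\widetilde{\A}}$ and $\P_{\widetilde{\A}}$ in the preceding definitions.
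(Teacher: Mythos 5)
Your proposal is correct and follows essentially the same route as the paper: invoke Proposition~\ref{jfys} for the tensor factors, then note that the twist $q^{x_0}$ in~(\ref{ectwist}) and the further $v^{\lambda}$ from the $\star$-product on the Hall side together produce exactly the scalar $q^{\frac12\Lambda((\alpha+\beta)^\ast,(\gamma+\delta)^\ast)+(\beta,\gamma)+\langle\alpha,\delta\rangle}$ defining $\star$ on $\mathcal{T}_m\otimes\mathcal{T}_m$, via the dictionary $\alpha=\Dim X_\bullet$. The paper packages the argument as a two-step twist (first establishing a homomorphism for the intermediate $\ast$-product on $\mathcal{T}_m\otimes\mathcal{T}_m$, then superimposing the coincident $\star$-twists), but this is cosmetically, not substantively, different from your single matching of scalars.
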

\begin{proof}
By Proposition \ref{jfys}, we have the homomorphism of algebras
\begin{equation}\label{zmxy}\int\otimes\int: (\mathcal {D}\mathcal {H}_q^{c_i}(\widetilde{\A})\otimes\mathcal {D}\mathcal {H}_q^{c_i}(\widetilde{\A}),\ast)\longrightarrow(\mathcal{T}_m\otimes\mathcal{T}_m,\ast),\end{equation}
where the multiplication $\ast$ on $\mathcal{T}_m\otimes\mathcal{T}_m$ is defined by $$(X^\alpha\otimes X^\beta)\ast (X^\gamma\otimes X^\delta):=q^{(\beta, \gamma)+\langle \alpha, \delta\rangle}X^{\alpha+\gamma}\otimes X^{\beta+\delta}$$
for any $\alpha, \beta, \gamma, \delta\in\mathbb{Z}^m$. So, \begin{equation*}(X^\alpha\otimes X^\beta)\star (X^\gamma\otimes X^\delta):=v^{\Lambda((\alpha+\beta)^\ast, (\gamma+\delta)^\ast)}(X^\alpha\otimes X^\beta)\ast (X^\gamma\otimes X^\delta).
\end{equation*}
Thus, by using the coincident twisting on both sides of (\ref{zmxy}), we get the desired homomorphism of algebras.
\end{proof}

\subsection{Homomorphisms to the quantum torus}
Define the {\em quantum torus} $\mathcal{T}_\Lambda$ to be the $\ZZ[v,v^{-1}]$-algebra with the basis $\{X^{\alpha}~|~\alpha\in \mathbb{Z}^{m}\}$ and the
multiplication given by
\begin{equation}\label{ljhtorus}X^{\alpha}\star X^{\beta}=v^{\Lambda(\alpha,\beta)}X^{\alpha+\beta}.\end{equation}

\begin{proposition}{\rm(\cite[Proposition 8.11]{FPZ})}\label{tzh}
The map $\mu: (\mathcal{T}_m\otimes\mathcal{T}_m,\star)\longrightarrow(\mathcal{T}_\Lambda,\star)$ defined by
$$\mu(X^{\alpha}\otimes X^{\beta})=v^{-(\alpha,\beta)-\lr{\alpha,\beta}}X^{-^\ast\alpha-\beta^\ast},$$
where $\alpha,\beta\in\mathbb{Z}^m$, is a homomorphism of algebras.
\end{proposition}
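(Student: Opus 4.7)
The plan is to verify the identity
\begin{equation*}
\mu\bigl((X^\alpha\otimes X^\beta)\star (X^\gamma\otimes X^\delta)\bigr) = \mu(X^\alpha\otimes X^\beta)\star \mu(X^\gamma\otimes X^\delta)
\end{equation*}
directly on pairs of basis elements of $\mathcal{T}_m\otimes\mathcal{T}_m$. Since $\mu$ sends $X^\alpha\otimes X^\beta$ to a scalar multiple of $X^{-^\ast\alpha-\beta^\ast}$, both sides produce the same monomial $X^{-^\ast(\alpha+\gamma)-(\beta+\delta)^\ast}$, so the whole content reduces to matching the two scalar prefactors, i.e., two exponents of $v$.

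First I would compute the left-hand exponent. Applying the twisted multiplication on $\mathcal{T}_m\otimes\mathcal{T}_m$ and then the definition of $\mu$, and using $q=v^2$, the left-hand exponent of $v$ is
\begin{equation*}
E_L=\Lambda((\alpha+\beta)^\ast,(\gamma+\delta)^\ast)+2(\beta,\gamma)+2\lr{\alpha,\delta}-(\alpha+\gamma,\beta+\delta)-\lr{\alpha+\gamma,\beta+\delta}.
\end{equation*}
Applying $\mu$ to each factor first and then the multiplication (\ref{ljhtorus}) on $\mathcal{T}_\Lambda$ yields the right-hand exponent
\begin{equation*}
E_R=-(\alpha,\beta)-\lr{\alpha,\beta}-(\gamma,\delta)-\lr{\gamma,\delta}+\Lambda(^\ast\alpha+\beta^\ast,^\ast\gamma+\delta^\ast).
\end{equation*}
The task is thus to prove $E_L=E_R$.

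To do this I would expand both $\Lambda$-expressions bilinearly. Lemma \ref{sjishu}(4) gives $\Lambda(\alpha^\ast,\gamma^\ast)=\Lambda(^\ast\alpha,^\ast\gamma)$ and $\Lambda(\beta^\ast,\delta^\ast)=\Lambda(^\ast\beta,^\ast\delta)$, so two of the four cross-terms cancel from the difference of the $\Lambda$ terms. Using $\alpha^\ast-{}^\ast\alpha=B(\widetilde{Q})\alpha$ and the skew-symmetry of $\Lambda$, what survives reduces to
\begin{equation*}
\Lambda((\alpha+\beta)^\ast,(\gamma+\delta)^\ast)-\Lambda(^\ast\alpha+\beta^\ast,^\ast\gamma+\delta^\ast)=\Lambda(B(\widetilde{Q})\alpha,\delta^\ast)-\Lambda(B(\widetilde{Q})\gamma,\beta^\ast),
\end{equation*}
which by Lemma \ref{sjishu}(2) equals $\lr{\delta,\alpha}-\lr{\beta,\gamma}$. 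Combining this with the standard symmetrization $(\sigma,\tau)=\lr{\sigma,\tau}+\lr{\tau,\sigma}$ of the Euler form, the remaining non-$\Lambda$ contributions of $E_L-E_R$ expand and collapse to $-\lr{\delta,\alpha}+\lr{\beta,\gamma}$, so $E_L-E_R=0$.

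The argument is essentially a bookkeeping exercise, not conceptually hard; the main obstacle is arithmetic, namely carefully tracking the factors of $\tfrac12$, the conversion between $q$ and $v=\sqrt{q}$, and the sign conventions of the skew-symmetric form $\Lambda$, while identifying the precise moment at which parts (2) and (4) of Lemma \ref{sjishu} allow the $\Lambda$-valued terms to be traded for values of the Euler pairing.
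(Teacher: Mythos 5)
Your proof is correct. The paper itself offers no proof of this proposition --- it is imported verbatim from \cite[Proposition 8.11]{FPZ} --- so there is no in-paper argument to compare against; the direct verification you carry out is exactly what such a proof must do. I checked the bookkeeping: with $(\sigma,\tau)=\lr{\sigma,\tau}+\lr{\tau,\sigma}$ (the symmetrized Euler form, which is indeed the convention here), the $\Lambda$-difference gives $\lr{\delta,\alpha}-\lr{\beta,\gamma}$ via $\alpha^*-{}^*\alpha=B(\widetilde Q)\alpha$, skew-symmetry, Lemma~\ref{sjishu}(4) to kill $\Lambda(\alpha^*,\gamma^*)-\Lambda({}^*\alpha,{}^*\gamma)$, and Lemma~\ref{sjishu}(2) for the two surviving cross-terms; the remaining scalar terms indeed collapse to $\lr{\beta,\gamma}-\lr{\delta,\alpha}$, and the two cancel. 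One small thing worth stating explicitly in a polished writeup is the identification $(\sigma,\tau)=\lr{\sigma,\tau}+\lr{\tau,\sigma}$, since the paper never defines the pairing $(\cdot,\cdot)$ on $\mathbb Z^m$ --- you are relying on the standard convention, correctly, but a reader tracing constants will want that spelled out.
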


Given $M\in\widetilde{\A}$, we denote by $\mathrm{Gr}_{\bf{e}}M$ the set of all submodules $V$
of $M$ with $\Dim V= \bf{e}$.
Consider the map $\psi=\mu\circ(\int\otimes\int)\circ\Delta$ defined by the following commutative diagram
\begin{equation}\label{0fenjie}
\xymatrix{(\mathcal {D}\mathcal {H}_\Lambda^{ec}(\widetilde{\A}),\star)\ar[r]^-{\psi}\ar[d]_-{\Delta}&(\mathcal{T}_\Lambda,\star)\\(\mathcal {D}\mathcal {H}_q^{ec}(\widetilde{\A})\otimes\mathcal {D}\mathcal {H}_q^{ec}(\widetilde{\A}),\star)\ar[r]^-{\int\otimes\int}&(\mathcal{T}_m\otimes\mathcal{T}_m,\star).\ar[u]_-{\mu}}
\end{equation}
By the definitions of these three maps, we can directly compute $\psi(u_{I[-1]\oplus M\oplus P[1]})$, and obtain that
$$\psi(u_{I[-1]\oplus M\oplus P[1]})=\sum\limits_{\mathbf{e}}v^{\lr{{\bf p}-\mathbf{e},\mathbf{m}-\mathbf{e}-\mathbf{i}}}|\mathrm{Gr}_{\mathbf{e}}M|
X^{({\bf p}-\mathbf{e})^\ast-^\ast(\mathbf{m}-\mathbf{e}-\mathbf{i})},$$ for any $M\in\widetilde{\A}$, $I\in\I_{\widetilde{\A}}$ and $P\in\P_{\widetilde{\A}}$.

For each $i=1,2$, denote by $\psi_i$ the restriction of $\psi$ to $\mathcal {D}\mathcal {H}_\Lambda^{c_i}(\widetilde{\A})$, i.e., we have the following commutative diagram
\begin{equation}\label{fenjie}
\xymatrix{(\mathcal {D}\mathcal {H}_\Lambda^{c_i}(\widetilde{\A}),\star)\ar[r]^-{\psi_i}\ar[d]_-{\Delta}&(\mathcal{T}_\Lambda,\star)\\(\mathcal {D}\mathcal {H}_q^{c_i}(\widetilde{\A})\otimes\mathcal {D}\mathcal {H}_q^{c_i}(\widetilde{\A}),\star)\ar[r]^-{\int\otimes\int}&(\mathcal{T}_m\otimes\mathcal{T}_m,\star).\ar[u]_-{\mu}}
\end{equation}
Using Lemmas \ref{twyucheng12}, \ref{twjf12} and Proposition \ref{tzh}, we obtain that the map $\psi_i$ is a homomorphism of algebras for each $i=1,2$. To sum up, we have the following
\begin{proposition}{\rm(\cite[Theorem 8.12]{FPZ})}
The map $\psi_1:(\mathcal {D}\mathcal {H}_\Lambda^{c_1}(\widetilde{\A}),\star)\longrightarrow(\mathcal{T}_\Lambda,\star)$ is a homomorphism of algebras. Moreover, for any $M\in\widetilde{\A}$ and $P\in\P_{\widetilde{\A}}$,
$$\psi_1(u_{M\oplus P[1]})=\sum\limits_{\mathbf{e}}v^{\lr{\mathbf{p}-\mathbf{e},\mathbf{m}-\mathbf{e}}}|\mathrm{Gr}_{\mathbf{e}}M|
X^{(\mathbf{p}-\mathbf{e})^\ast-^\ast(\mathbf{m}-\mathbf{e})}.$$
\end{proposition}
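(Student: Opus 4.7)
The plan is to exploit the factorization
$\psi_1=\mu\circ(\int\otimes\int)\circ\Delta$
encoded in diagram~(\ref{fenjie}). The homomorphism property is then immediate: the restriction of $\Delta$ to $\mathcal{DH}_\Lambda^{c_1}(\widetilde{\A})$ is an algebra homomorphism by Lemma~\ref{twyucheng12}, the map $\int\otimes\int$ is an algebra homomorphism between the $\star$-twisted tensor algebras by Lemma~\ref{twjf12}, and $\mu$ is an algebra homomorphism by Proposition~\ref{tzh}. Composing the three gives the assertion that $\psi_1$ is an algebra homomorphism.

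For the explicit formula on a generator $u_{M\oplus P[1]}$ with $M\in\widetilde{\A}$ and $P\in\P_{\widetilde{\A}}$, I would trace an element through the diagram. First, specializing the definition~(\ref{ecyucheng}) to the case $I=0$ yields
\[
\Delta(u_{M\oplus P[1]})=\sum_{[A],[B]}q^{\lr{\hat{A},\hat{B}-\hat{P}}}F^{M}_{AB}\,(u_{A}\otimes u_{B\oplus P[1]}).
\]
Applying $\int\otimes\int$, and using that $\int u_{A}=X^{\hat{A}}$ and $\int u_{B\oplus P[1]}=X^{\hat{B}-\hat{P}}$ (since $P[1]$ contributes $-\hat{P}$ to the total dimension under $f$), produces
\[
\sum_{[A],[B]}q^{\lr{\hat{A},\hat{B}-\hat{P}}}F^{M}_{AB}\,(X^{\hat{A}}\otimes X^{\hat{B}-\hat{P}}).
\]
Then $\mu$ converts each tensor factor via $\mu(X^{\alpha}\otimes X^{\beta})=v^{-(\alpha,\beta)-\lr{\alpha,\beta}}X^{-^{\ast}\alpha-\beta^{\ast}}$; taking $\alpha=\hat{A}$ and $\beta=\hat{B}-\hat{P}$ and collecting $v$-powers reduces the exponent (using that the symmetric form is $(\alpha,\beta)=\lr{\alpha,\beta}+\lr{\beta,\alpha}$) to $-\lr{\hat{B}-\hat{P},\hat{A}}$, and the monomial part to $X^{(\hat{P}-\hat{B})^{\ast}-{}^{\ast}\hat{A}}$.

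The last step is to repackage the double sum over isomorphism classes as a sum over dimension vectors. Because both the $v$-exponent and the monomial depend on $A,B$ only through $\hat{A}$ and $\hat{B}$, I would introduce $\mathbf{e}=\hat{B}$ (so $\hat{A}=\mathbf{m}-\mathbf{e}$ whenever $F^{M}_{AB}\ne 0$) and use the standard identity
\[
\sum_{[A],[B]:\,\hat{B}=\mathbf{e}}F^{M}_{AB}=|\mathrm{Gr}_{\mathbf{e}}M|,
\]
which follows from the fact that counting short exact sequences $0\to B\to M\to A\to 0$ with fixed $\mathbf{e}=\hat{B}$ is the same as counting submodules of $M$ of dimension $\mathbf{e}$. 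Substituting $\mathbf{m}-\mathbf{e}$ for $\hat{A}$ and $\mathbf{e}$ for $\hat{B}$ in the remaining exponents gives $\lr{\mathbf{p}-\mathbf{e},\mathbf{m}-\mathbf{e}}$ and $(\mathbf{p}-\mathbf{e})^{\ast}-{}^{\ast}(\mathbf{m}-\mathbf{e})$, producing the claimed closed form.

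The only slightly delicate point is bookkeeping of the $v$-exponents: one must check carefully that the $q$-power from $\Delta$, the twist built into $\mu$, and the symmetrized Euler form combine to leave exactly $v^{\lr{\mathbf{p}-\mathbf{e},\mathbf{m}-\mathbf{e}}}$. Everything else is routine once the factorization and the Grassmannian identity are in place.
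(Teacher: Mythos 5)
Your proposal is correct and takes essentially the same route as the paper: both factor $\psi_1 = \mu\circ(\int\otimes\int)\circ\Delta$ through the commutative diagram~(\ref{fenjie}), invoke Lemmas~\ref{twyucheng12} and~\ref{twjf12} and Proposition~\ref{tzh} for the homomorphism property, and then trace $u_{M\oplus P[1]}$ (or more generally $u_{I[-1]\oplus M\oplus P[1]}$ with $I=0$) through the three maps, collecting $v$-powers and summing the Hall numbers $F^M_{AB}$ over fixed $\hat B=\mathbf e$ to get $|\mathrm{Gr}_{\mathbf e}M|$. The bookkeeping of exponents you carry out is exactly what the paper describes as the direct computation of $\psi(u_{I[-1]\oplus M\oplus P[1]})$.
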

\begin{proposition}
The map $\psi_2:(\mathcal {D}\mathcal {H}_\Lambda^{c_2}(\widetilde{\A}),\star)\longrightarrow(\mathcal{T}_\Lambda,\star)$ is a homomorphism of algebras. Moreover, for any $M\in\widetilde{\A}$ and $I\in\I_{\widetilde{\A}}$,
$$\psi_2(u_{M\oplus I[-1]})=\sum\limits_{\mathbf{e}}v^{-\lr{\mathbf{e},\mathbf{m}-\mathbf{e}-\mathbf{i}}}|\mathrm{Gr}_{\mathbf{e}}M|
X^{-\mathbf{e}^\ast-^\ast(\mathbf{m}-\mathbf{e}-\mathbf{i})}.$$
\end{proposition}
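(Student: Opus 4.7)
The plan is to exploit the factorisation $\psi_2 = \mu \circ (\int \otimes \int) \circ \Delta$ from diagram \eqref{fenjie} and to piggyback on the general formula for $\psi$ that was already displayed right after diagram \eqref{0fenjie}. The argument is the exact dual of the $\psi_1$ case, and no new techniques are required; the proof is really a two-step assembly.

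For the algebra-homomorphism claim, I would simply note that each of the three arrows in the restricted diagram \eqref{fenjie} is a homomorphism in the $c_2$-setting. Specifically: Lemma~\ref{twyucheng12} with $i=2$ yields that
$$\Delta\colon (\mathcal{D}\mathcal{H}_\Lambda^{c_2}(\widetilde{\A}),\star)\longrightarrow (\mathcal{D}\mathcal{H}_q^{c_2}(\widetilde{\A})\otimes \mathcal{D}\mathcal{H}_q^{c_2}(\widetilde{\A}),\star)$$
is an algebra map; Lemma~\ref{twjf12} with $i=2$ makes $\int\otimes\int$ an algebra map into $(\mathcal{T}_m\otimes\mathcal{T}_m,\star)$; and Proposition~\ref{tzh} supplies the algebra homomorphism $\mu\colon (\mathcal{T}_m\otimes\mathcal{T}_m,\star)\to(\mathcal{T}_\Lambda,\star)$. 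The composition is therefore an algebra homomorphism, which is the first assertion.

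For the explicit formula, the cleanest route is to observe that on the subalgebra $\mathcal{D}\mathcal{H}_\Lambda^{c_2}(\widetilde{\A})$ the value of $\psi_2$ agrees with $\psi$, so one only has to specialise the computed expression
$$\psi(u_{I[-1]\oplus M\oplus P[1]})=\sum_{\mathbf{e}} v^{\lr{\mathbf{p}-\mathbf{e},\mathbf{m}-\mathbf{e}-\mathbf{i}}}|\mathrm{Gr}_{\mathbf{e}}M|\, X^{(\mathbf{p}-\mathbf{e})^\ast-^\ast(\mathbf{m}-\mathbf{e}-\mathbf{i})}$$
to $P=0$ (hence $\mathbf{p}=0$), yielding
$$\psi_2(u_{M\oplus I[-1]})=\sum_{\mathbf{e}} v^{-\lr{\mathbf{e},\mathbf{m}-\mathbf{e}-\mathbf{i}}}|\mathrm{Gr}_{\mathbf{e}}M|\, X^{-\mathbf{e}^\ast-^\ast(\mathbf{m}-\mathbf{e}-\mathbf{i})},$$
as required. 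As a sanity check, I would also be prepared to rederive the formula from scratch by running $\Delta$ on $u_{M\oplus I[-1]}$ via \eqref{ecyucheng}, applying $\int\otimes\int$ termwise, and then $\mu$; the sum over pairs $([A],[B])$ with $[B]$ a submodule class of dimension $\mathbf{e}$ groups to $|\mathrm{Gr}_{\mathbf{e}}M|$ and the combined Euler-form weights collapse to $v^{-\lr{\mathbf{e},\mathbf{m}-\mathbf{e}-\mathbf{i}}}$.

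There is no genuine obstacle here; the only bookkeeping to watch is that the bilinear symmetric form $(-,-)$ appearing in $\mu$ combines with the $q^{\lr{\,,\,}}$ factor coming from \eqref{ecyucheng} to leave precisely one copy of the Euler form in the final exponent, with the sign flipping because of the $-\mathbf{e}$ in place of $\mathbf{p}-\mathbf{e}$. Given that this balance was already verified (by the text's computation of $\psi$ for the general $c^e$-case), the $c_2$-restriction is automatic.
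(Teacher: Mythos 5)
Your proof is correct and coincides with the paper's own argument: the homomorphism claim follows by composing the three algebra maps from Lemma~\ref{twyucheng12} (with $i=2$), Lemma~\ref{twjf12} (with $i=2$), and Proposition~\ref{tzh} via diagram \eqref{fenjie}, and the explicit formula is obtained by specializing the already-computed expression for $\psi$ to $P=0$. The sign bookkeeping you flag (linearity of $(-)^\ast$ giving $(-\mathbf{e})^\ast=-\mathbf{e}^\ast$ and $\lr{-\mathbf{e},\cdot}=-\lr{\mathbf{e},\cdot}$) is exactly what is needed and is handled correctly.
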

In fact, the map $\psi$ is also a homomorphism of algebras, i.e., we have the following
\begin{theorem}\label{mainresult}
The map $\psi:(\mathcal {D}\mathcal {H}_\Lambda^{ec}(\widetilde{\A}),\star)\longrightarrow(\mathcal{T}_\Lambda,\star)$ defined by
$$\psi(u_{I[-1]\oplus M\oplus P[1]})=\sum\limits_{\mathbf{e}}v^{\lr{{\bf p}-\mathbf{e},\mathbf{m}-\mathbf{e}-\mathbf{i}}}|\mathrm{Gr}_{\mathbf{e}}M|
X^{({\bf p}-\mathbf{e})^\ast-^\ast(\mathbf{m}-\mathbf{e}-\mathbf{i})},$$ for any $M\in\widetilde{\A}$, $I\in\I_{\widetilde{\A}}$ and $P\in\P_{\widetilde{\A}}$, is a homomorphism of algebras.
\end{theorem}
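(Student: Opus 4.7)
The plan is to extend the factorization $\psi=\mu\circ(\int\otimes\int)\circ\Delta$ already used to treat $\psi_1$ and $\psi_2$. The crucial observation is that, by the defining formula (\ref{ecyucheng}), the image of $\Delta$ on $\mathcal{DH}_\Lambda^{ec}(\widetilde{\A})$ in fact lies in the smaller tensor product $\mathcal{DH}_q^{c_2}(\widetilde{\A})\otimes\mathcal{DH}_q^{c_1}(\widetilde{\A})$: each summand $u_{M\oplus I[-1]}\otimes u_{N\oplus P[1]}$ has its first tensor factor in $C_{\widetilde{\A}}^2$ and its second in $C_{\widetilde{\A}}^1$. On this restricted target, Proposition \ref{jfys} applies to each tensor factor separately, which is precisely the mechanism needed to make $\int\otimes\int$ an algebra homomorphism---the feature that fails for $\int$ on all of $\mathcal{DH}_q^{ec}(\widetilde{\A})$.

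Two intermediate homomorphism statements, parallel to Lemmas \ref{twyucheng12} and \ref{twjf12}, then suffice. First, I would show that $\Delta:(\mathcal{DH}_\Lambda^{ec}(\widetilde{\A}),\star)\to(\mathcal{DH}_q^{c_2}(\widetilde{\A})\otimes\mathcal{DH}_q^{c_1}(\widetilde{\A}),\star)$ is a homomorphism. Proposition \ref{ecp} already gives the analogous statement for the untwisted $\ast$-products, and $\Delta$ is homogeneous---every summand of $\Delta(u_{I[-1]\oplus L\oplus P[1]})$ has total dimension vector $\mathbf{l}-\mathbf{i}-\mathbf{p}$, equal to that of the source---so the twisting factors appearing in $\Delta(u\star v)$ and in $\Delta(u)\star\Delta(v)$ agree, exactly as in the proof of Lemma \ref{twyucheng12}. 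Second, $\int\otimes\int:(\mathcal{DH}_q^{c_2}(\widetilde{\A})\otimes\mathcal{DH}_q^{c_1}(\widetilde{\A}),\star)\to(\mathcal{T}_m\otimes\mathcal{T}_m,\star)$ is a homomorphism: the argument follows Lemma \ref{twjf12} verbatim, since Proposition \ref{jfys} applies to each tensor factor with the appropriate $i\in\{1,2\}$, and the twisting formulas on both sides depend only on the total dimension vectors.

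Composing these two intermediate results with Proposition \ref{tzh} yields $\psi$ as an algebra homomorphism. The explicit formula in the theorem statement is then obtained by tracing the three arrows of diagram (\ref{0fenjie}) on the basis element $u_{I[-1]\oplus M\oplus P[1]}$: $\Delta$ produces a sum indexed by submodules of dimension $\mathbf{e}$ of $M$, $\int\otimes\int$ converts it into a sum of tensors of monomials $X^{(\cdot)}\otimes X^{(\cdot)}$, and $\mu(X^\alpha\otimes X^\beta)=v^{-(\alpha,\beta)-\langle\alpha,\beta\rangle}X^{-{}^\ast\alpha-\beta^\ast}$ collapses each tensor. The main obstacle I anticipate is the careful bookkeeping of the quadratic twisting factors in verifying the two intermediate homomorphism statements; these reduce to the same bilinear identities involving $\Lambda$, $B(\widetilde{Q})$ and $E'(\widetilde{Q})$ from Lemma \ref{sjishu} already used for $\psi_1$ and $\psi_2$, so no conceptual novelty is required beyond correctly matching the scalars on both sides of each arrow.
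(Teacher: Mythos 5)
Your proposal is correct, and it takes a genuinely different route from the paper's. The paper's proof is a generators-and-relations argument: by Proposition~\ref{elsubalgebra}, $\mathcal{DH}_\Lambda^{ec}(\widetilde{\A})$ is presented by generators $u_{P[1]},u_M,u_{I[-1]}$ and relations (\ref{lgx2})--(\ref{lgx8}); relations (\ref{lgx2})--(\ref{lgx3}) live in $\mathcal{DH}_\Lambda^{c_1}$ and (\ref{lgx3})--(\ref{lgx6}) in $\mathcal{DH}_\Lambda^{c_2}$, so the already-established fact that $\psi_1,\psi_2$ are homomorphisms disposes of all relations except the cross relation (\ref{lgx8}), which the authors verify by a short direct computation of $X^{{}^\ast\mathbf{i}}X^{\mathbf{p}^\ast}$ in $\mathcal{T}_\Lambda$. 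You instead observe, directly from formula (\ref{ecyucheng}), that $\Delta(\mathcal{DH}_\Lambda^{ec}(\widetilde{\A}))\subseteq\mathcal{DH}_q^{c_2}(\widetilde{\A})\otimes\mathcal{DH}_q^{c_1}(\widetilde{\A})$, and that this target is a subalgebra on which $\int\otimes\int$ becomes an algebra homomorphism because Proposition~\ref{jfys} applies to each tensor factor; corestricting $\Delta$ and following Lemmas~\ref{twyucheng12} and \ref{twjf12} verbatim (the twisting factors depend only on total dimension vectors, which both $\Delta$ and $\int$ respect) makes the full composition $\mu\circ(\int\otimes\int)\circ\Delta$ a homomorphism with no residual relation to check. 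Your argument is slightly more conceptual---it explains structurally why the diagram (\ref{0fenjie}) can be made commutative with all four arrows being homomorphisms, rather than patching the last relation by hand---while the paper's is shorter given that $\psi_1,\psi_2$ and Proposition~\ref{elsubalgebra} are already in place. Both are correct; yours does not rely on knowing the presentation of $\mathcal{DH}_\Lambda^{ec}(\widetilde{\A})$ but does require noting the closure of $\mathcal{DH}_q^{c_2}\otimes\mathcal{DH}_q^{c_1}$ under the twisted product, which follows because $C_{\widetilde{\A}}^1$ and $C_{\widetilde{\A}}^2$ are extension-closed.
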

\begin{proof}
Since we have that $\psi_i$ is a homomorphism of algebras for each $i=1,2$,
it suffices to prove that $\psi$ preserves the relation (\ref{lgx8}).

In the quantum torus $\mathcal{T}_\Lambda$, noting that
\begin{flalign*}
\Lambda({\bf p}^\ast,^\ast{\bf i})=\Lambda({E}'(\widetilde{Q}){\bf p},{E}'(\widetilde{Q}){\bf i})-\Lambda({E}'(\widetilde{Q}){\bf p},B(\widetilde{Q}){\bf i})=\Lambda({\bf p}^\ast,{\bf i}^\ast)+\lr{{\bf p},{\bf i}},
\end{flalign*}
we have that
\begin{flalign*}
X^{^\ast{\bf i}}X^{{\bf p}^\ast}=q^{\Lambda(^\ast{\bf i},{\bf p}^\ast)}X^{{\bf p}^\ast}X^{^\ast{\bf i}}=q^{\Lambda({\bf i}^\ast,{\bf p}^\ast)-\lr{{\bf p},{\bf i}}}X^{{\bf p^\ast}}X^{^\ast{\bf i}}.
\end{flalign*}
Since
\begin{flalign*}X^{{\bf p}^\ast}X^{^\ast{\bf i}}=v^{\Lambda({\bf p}^\ast,^\ast{\bf i})}X^{{\bf p}^\ast+^\ast{\bf i}}=v^{\Lambda({\bf p}^\ast,{\bf i}^\ast)+\lr{{\bf p},{\bf i}}}X^{{\bf p}^\ast+^\ast{\bf i}},\end{flalign*}
we obtain that
\begin{flalign*}\psi(v^{\Lambda({\bf i}^\ast,{\bf p}^\ast)}u_{I[-1]\oplus P[1]})=
v^{\Lambda({\bf i}^\ast,{\bf p}^\ast)-\lr{{\bf p},{\bf i}}}X^{{\bf p}^\ast+^\ast{\bf i}}=
q^{\Lambda({\bf i}^\ast,{\bf p}^\ast)-\lr{{\bf p},{\bf i}}}X^{{\bf p}^\ast}X^{^\ast{\bf i}}.\end{flalign*}
Therefore, the relation (\ref{lgx8}) is preserved under $\psi$, and thus we complete the proof.
\end{proof}

For any $M\in\widetilde{\A}$, $I\in\I_{\widetilde{\A}}$ and $P\in\P_{\widetilde{\A}}$, define the {\em quantum cluster character} $X_{I[-1]\oplus M\oplus P[1]}$ in the quantum torus $\mathcal{T}_\Lambda$ by
\begin{equation}\label{qcltz}X_{I[-1]\oplus M\oplus P[1]}=\sum\limits_{\mathbf{e}}v^{\lr{{\bf p}-\mathbf{e},\mathbf{m}-\mathbf{e}-\mathbf{i}}}|\mathrm{Gr}_{\mathbf{e}}M|
X^{({\bf p}-\mathbf{e})^\ast-^\ast(\mathbf{m}-\mathbf{e}-\mathbf{i})}.\end{equation}
Here the quantum cluster characters $X_M$, $X_{M\oplus I[-1]}$ and $X_{M\oplus P[1]}$ recover those given in \cite{Rupel1}, \cite{D} and \cite{DXZ}, respectively.

\begin{remark}\label{yizhi}
Let $P\in\P_{\widetilde{\A}}$ and $I=\nu(P)$, where $\nu$ is the Nakayama functor in $\widetilde{\A}$. Then $\soc I\cong \top P$. Noting that $\Dim\soc I={^\ast{\bf i}}$ and $\Dim\top P={\bf p}^\ast$, we have that ${^\ast{\bf i}}={{\bf p}^\ast}$. Thus, $X_{I[-1]}=X_{P[1]}$. However, in general, $X_{M\oplus I[-1]}\neq X_{M\oplus P[1]}$ for $M\in\widetilde{\A}$. On the other hand, if $\Hom_{\widetilde{\A}}(M,I)\cong\Hom_{\widetilde{\A}}(P,M)=0$, i.e., $\lr{{\bf m},{\bf i}}=\lr{{\bf p},{\bf m}}=0$, then $X_{M\oplus I[-1]}= X_{M\oplus P[1]}$. In fact, in this case, for any submodule $E$ of $M$, it is easy to see that $\Hom_{\widetilde{\A}}(E,I)=0$ and $\Hom_{\widetilde{\A}}(P,M/E)=0$.
\end{remark}

Using Theorem \ref{mainresult} together with Proposition \ref{elsubalgebra}, we obtain the following
\begin{corollary}\label{Hallcfgs}
We have the following cluster multiplication formulas in the quantum torus $\mathcal{T}_\Lambda$:
\begin{flalign}
&X_{P[1]} X_{Q[1]}=q^{\frac{1}{2}\Lambda({\bf p}^\ast,{\bf q}^\ast)}
X_{(P\oplus Q)[1]}
=q^{\Lambda({\bf p}^\ast,{\bf q}^\ast)}X_{Q[1]} X_{P[1]};\label{qre2}\\
&X_{M} X_{P[1]}=q^{-\frac{1}{2}\Lambda({\bf m}^\ast,{\bf p}^\ast)}X_{M\oplus
P[1]};\label{qre5}\\
&X_{P[1]} X_{M}=q^{-\frac{1}{2}\Lambda({\bf p}^\ast,{\bf m}^\ast)-\lr{{\bf p},{\bf m}}}
\sum\limits_{[F],[P']}|{}_{P'}\Hom_{\widetilde{\A}}(P,M)_F|X_{F\oplus P'[1]};\label{qre7}\\
&X_{M} X_{N}=q^{\frac{1}{2}\Lambda({\bf m}^\ast,{\bf n}^\ast)+\lr{{\bf m},{\bf n}}}\sum_{[L]}\frac{|\mathrm{Ext}_{\widetilde{\A}}^{1}(M,N)_{L}|}{|\mathrm{Hom}_{\widetilde{\A}}(M,N)|}X_L;\label{qre3}\\
&X_{I[-1]}X_{J[-1]}=
q^{\frac{1}{2}\Lambda({\bf i}^\ast,{\bf j}^\ast)}X_{(I\oplus J)[-1]}
=q^{\Lambda({\bf i}^\ast,{\bf j}^\ast)}X_{J[-1]}X_{I[-1]};\label{qre1}\\
&X_{I[-1]} X_{M}=q^{-\frac{1}{2}\Lambda({\bf i}^\ast,{\bf m}^\ast)}X_{M\oplus
I[-1]};\label{qre4}\\
&X_{M} X_{I[-1]}=q^{-\frac{1}{2}\Lambda({\bf m}^\ast,{\bf i}^\ast)-\lr{{\bf m},{\bf i}}}
\sum\limits_{[G],[I']}|{}_{G}\Hom_{\widetilde{\A}}(M,I)_{I'}|X_{G\oplus I'[-1]};\label{qre6}\\
&X_{I[-1]} X_{P[1]}=q^{\Lambda({\bf i}^\ast,{\bf p}^\ast)-\lr{{\bf p},{\bf i}}}X_{P[1]} X_{I[-1]}=q^{\frac{1}{2}\Lambda({\bf i}^\ast,{\bf p}^\ast)}X_{I[-1]\oplus P[1]};\label{qre8}\end{flalign}
for any $P,Q\in\P_{\widetilde{\A}}$, $M,N\in\widetilde{\A}$ and $I,J\in\I_{\widetilde{\A}}$.
\end{corollary}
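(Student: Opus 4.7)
My plan is to derive the eight cluster multiplication formulas (\ref{qre2})--(\ref{qre8}) by pushing the defining relations (\ref{lgx2})--(\ref{lgx8}) of $\mathcal{D}\mathcal{H}_\Lambda^{ec}(\widetilde{\A})$ through the algebra homomorphism $\psi$ of Theorem \ref{mainresult}. The key observation is that, by construction, $\psi$ sends the basis element $u_{I[-1]\oplus M\oplus P[1]}$ to precisely the quantum cluster character $X_{I[-1]\oplus M\oplus P[1]}$ defined in (\ref{qcltz}); specializing the appropriate summands to zero yields $\psi(u_{P[1]}) = X_{P[1]}$, $\psi(u_M) = X_M$, $\psi(u_{I[-1]}) = X_{I[-1]}$, $\psi(u_{M\oplus P[1]}) = X_{M\oplus P[1]}$, and $\psi(u_{M\oplus I[-1]}) = X_{M\oplus I[-1]}$. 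Consequently, the corollary should be essentially a transcription once $\psi$ is known to be multiplicative.

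Concretely, I would walk through the eight relations in Proposition \ref{elsubalgebra} one by one and apply $\psi$. Since $\psi$ is $\ZZ[v,v^{-1}]$-linear and multiplicative with respect to the twisted product $\star$ on both source and target, every scalar $q^{?}$ and every combinatorial coefficient of the form $|{}_?\Hom_{\widetilde{\A}}(?,?)_?|$ or $|\Ext_{\widetilde{\A}}^{1}(?,?)_?|/|\Hom_{\widetilde{\A}}(?,?)|$ appearing on the right passes through unchanged, while each product $u_A \star u_B$ on the left becomes $X_A \cdot X_B$ in $\mathcal{T}_\Lambda$ (with the multiplication (\ref{ljhtorus})). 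In this way relation (\ref{lgx2}) yields (\ref{qre2}), (\ref{lgx5}) yields (\ref{qre5}), and so on through (\ref{lgx8}) yielding (\ref{qre8}).

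The only mildly subtle point to verify is that the two alternative expressions on the right of the ``symmetric'' formulas (\ref{qre2}), (\ref{qre1}) and (\ref{qre8}) are consistent in $\mathcal{T}_\Lambda$; for instance one must check that $q^{\frac{1}{2}\Lambda({\bf p}^\ast,{\bf q}^\ast)} X_{(P\oplus Q)[1]} = q^{\Lambda({\bf p}^\ast,{\bf q}^\ast)} X_{Q[1]} X_{P[1]}$, which follows immediately by combining the first equality with the commutation rule (\ref{ljhtorus}) in the quantum torus. Beyond this small bookkeeping step there is no genuine obstacle: the substantive work of constructing the comultiplication, the integration maps, the algebra homomorphism $\mu$, and verifying that the composite $\psi$ respects the delicate relation (\ref{lgx8}) has already been carried out in Theorem \ref{mainresult}, so the corollary follows as an immediate consequence.
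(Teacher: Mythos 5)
Your proposal is correct and is exactly the paper's route: the paper introduces the corollary with ``Using Theorem~\ref{mainresult} together with Proposition~\ref{elsubalgebra}, we obtain the following'' and gives no further argument, relying precisely on the fact that the algebra homomorphism $\psi$ sends $u_{I[-1]\oplus M\oplus P[1]}$ to $X_{I[-1]\oplus M\oplus P[1]}$ so that the relations (\ref{lgx2})--(\ref{lgx8}) transcribe directly into (\ref{qre2})--(\ref{qre8}).
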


\begin{corollary}
Let $M\in\widetilde{\A}$, $I\in\I_{\widetilde{\A}}$ and $P\in\P_{\widetilde{\A}}$.
\begin{itemize}
\item[$(1)$] If $\Hom_{\widetilde{\A}}(M,I)=0$, $X_MX^{{^\ast{\bf i}}}=q^{-\Lambda({^\ast{\bf m}},{^\ast{\bf i}})}X^{{^\ast{\bf i}}}X_M$;
\item[$(2)$] If $\Hom_{\widetilde{\A}}(P,M)=0$, $X_MX^{{{\bf p}^\ast}}=q^{-\Lambda({{\bf m}^\ast},{{\bf p}^\ast})}X^{{{\bf p}^\ast}}X_M$.
\end{itemize}
\end{corollary}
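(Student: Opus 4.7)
The plan is to recognise $X^{{^\ast{\bf i}}}$ and $X^{{{\bf p}^\ast}}$ as the simplest specialisations of the quantum cluster character formula~(\ref{qcltz}): setting $M=P=0$ gives $X_{I[-1]}=X^{{^\ast{\bf i}}}$ (only the term $\mathbf{e}=0$ survives in the sum), and setting $M=I=0$ gives $X_{P[1]}=X^{{{\bf p}^\ast}}$. Hence each commutation statement reduces to comparing the two multiplication formulas in Corollary~\ref{Hallcfgs} that produce $X_M X_{I[-1]}$ versus $X_{I[-1]} X_M$ (respectively $X_M X_{P[1]}$ versus $X_{P[1]} X_M$) under the hypothesis of vanishing Hom.

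For part~$(1)$, the assumption $\Hom_{\widetilde{\A}}(M,I)=0$ forces the only contribution to the sum in~(\ref{qre6}) to be the zero morphism, which has kernel $M$ and cokernel $I$, so the sum collapses to the single term $G=M$, $I'=I$. Moreover, since $I$ is injective one has $\Ext^{1}_{\widetilde{\A}}(M,I)=0$, hence $\lr{{\bf m},{\bf i}}=0$. Substituting into~(\ref{qre6}) yields
\[
X_M\, X_{I[-1]}=q^{-\frac{1}{2}\Lambda({\bf m}^\ast,{\bf i}^\ast)}X_{M\oplus I[-1]},
\]
while~(\ref{qre4}) unconditionally gives $X_{I[-1]} X_M=q^{-\frac{1}{2}\Lambda({\bf i}^\ast,{\bf m}^\ast)}X_{M\oplus I[-1]}$. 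Dividing the two identities and using the skew-symmetry of $\Lambda$ yields $X_M X_{I[-1]}=q^{-\Lambda({\bf m}^\ast,{\bf i}^\ast)}X_{I[-1]}X_M$. Finally, Lemma~\ref{sjishu}(4) identifies $\Lambda({\bf m}^\ast,{\bf i}^\ast)=\Lambda({^\ast{\bf m}},{^\ast{\bf i}})$, and after substituting $X^{{^\ast{\bf i}}}$ for $X_{I[-1]}$ one obtains the claimed relation.

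Part~$(2)$ is strictly dual. The hypothesis $\Hom_{\widetilde{\A}}(P,M)=0$ reduces ${}_{P'}\Hom_{\widetilde{\A}}(P,M)_{F}$ to the zero morphism with $P'=P$, $F=M$, and projectivity of $P$ gives $\lr{{\bf p},{\bf m}}=0$, so~(\ref{qre7}) collapses to $X_{P[1]}X_M=q^{-\frac{1}{2}\Lambda({\bf p}^\ast,{\bf m}^\ast)}X_{M\oplus P[1]}$. Comparing with~(\ref{qre5}) and invoking the skew-symmetry of $\Lambda$ produces the desired commutation relation directly; since both sides already involve ${\bf m}^\ast$ and ${\bf p}^\ast$, no appeal to Lemma~\ref{sjishu}(4) is needed.

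The proof is almost entirely bookkeeping once the multiplication formulas in Corollary~\ref{Hallcfgs} are invoked. The only two non-mechanical observations are the collapse of the Ringel--Hall-type sum to a single term (immediate from the definition of ${}_{\,\cdot}\Hom(\cdot,\cdot)_{\,\cdot}$ together with vanishing of Hom), and the identification ${\bf m}^\ast\leftrightarrow{^\ast{\bf m}}$ under $\Lambda$ needed in part~$(1)$, which is precisely Lemma~\ref{sjishu}(4). No genuine obstacle is anticipated.
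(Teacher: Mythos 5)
Your proof is correct and follows essentially the same route as the paper: identify $X^{^\ast{\bf i}}=X_{I[-1]}$ and $X^{{\bf p}^\ast}=X_{P[1]}$ from (\ref{qcltz}), collapse (\ref{qre6}) (resp.\ (\ref{qre7})) to the single zero-morphism term using the Hom-vanishing and injectivity (resp.\ projectivity), compare with (\ref{qre4}) (resp.\ (\ref{qre5})) via skew-symmetry of $\Lambda$, and invoke Lemma~\ref{sjishu}(4) for the change of $\ast$-decoration in part (1). The paper states the same chain more tersely; your version simply spells out the intermediate justifications.
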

\begin{proof}
$(1)$~~Since $\Hom_{\widetilde{\A}}(M,I)=0$, we have that \begin{flalign*}X_MX^{{^\ast{\bf i}}}&=X_MX_{I[-1]}=v^{-\Lambda({{\bf m}^\ast},{{\bf i}^\ast})}X_{M\oplus I[-1]}
\\&=q^{-\Lambda({{\bf m}^\ast},{{\bf i}^\ast})}X_{I[-1]}X_M=q^{-\Lambda({^\ast{\bf m}},{^\ast{\bf i}})}X_{I[-1]}X_M.\end{flalign*}

$(2)$~~Similarly.
\end{proof}

\begin{corollary}\label{shuyu}
For any $M\in\widetilde{\A}$, $I\in\I_{\widetilde{\A}}$ and $P\in\P_{\widetilde{\A}}$, we have that
\begin{flalign*}v^{\Lambda({\bf i}^\ast,{\bf m}^\ast)}X_{I[-1]}X_{M\oplus P[1]}&=v^{\Lambda({\bf m}^\ast,{\bf p}^\ast)}X_{M\oplus I[-1]}X_{P[1]}\\
&=v^{\Lambda({\bf m}^\ast,({\bf p}-{\bf i})^\ast)}X_{I[-1]}X_{M}X_{P[1]}\\
&=v^{\Lambda({{\bf i}^\ast},{{\bf p}^\ast})}X_{I[-1]\oplus M\oplus P[1]}.\end{flalign*}
\end{corollary}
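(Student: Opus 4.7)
The plan is to reduce the four expressions successively, first identifying the first three via formulas (\ref{qre5}) and (\ref{qre4}) of Corollary~\ref{Hallcfgs} together with skew-symmetry of $\Lambda$, and then treating $X_{I[-1]\oplus M\oplus P[1]}$ through Theorem~\ref{mainresult}.

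For the first three equalities, by (\ref{qre5}) one has $X_{M\oplus P[1]}=v^{\Lambda({\bf m}^\ast,{\bf p}^\ast)}X_M X_{P[1]}$, and by (\ref{qre4}) one has $X_{M\oplus I[-1]}=v^{\Lambda({\bf i}^\ast,{\bf m}^\ast)}X_{I[-1]}X_M$. Substituting both, the first and second expressions each become $v^{\Lambda({\bf i}^\ast,{\bf m}^\ast)+\Lambda({\bf m}^\ast,{\bf p}^\ast)}X_{I[-1]}X_M X_{P[1]}$, and the skew-symmetry identity $\Lambda({\bf m}^\ast,({\bf p}-{\bf i})^\ast)=\Lambda({\bf m}^\ast,{\bf p}^\ast)+\Lambda({\bf i}^\ast,{\bf m}^\ast)$ shows that the third expression matches as well.

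For the final equality I would lift it to $\mathcal{D}\mathcal{H}_\Lambda^{ec}(\widetilde{\A})$ and use that $\psi$ is an algebra homomorphism (Theorem~\ref{mainresult}). The central step is the Hall-algebra identity $u_{I[-1]}\ast u_{M\oplus P[1]}=u_{I[-1]\oplus M\oplus P[1]}$ in the untwisted $\mathcal{D}\mathcal{H}_q^{ec}(\widetilde{\A})$: applying the defining formula of the derived Hall product, the hereditary vanishing $\mathrm{Ext}^1_{D^b(\widetilde{\A})}(I[-1],M\oplus P[1])=0$ collapses the sum onto the single cone $I[-1]\oplus M\oplus P[1]$, and a routine check shows the Euler factor $q^{\lr{I[-1],M\oplus P[1]}}$ cancels the alternating product of $\mathrm{Hom}$-dimensions. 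Twisting via (\ref{yizhitwisting1}) yields $u_{I[-1]\oplus M\oplus P[1]}=v^{\Lambda({\bf i}^\ast,({\bf m}-{\bf p})^\ast)}u_{I[-1]}\star u_{M\oplus P[1]}$, and applying $\psi$ gives $X_{I[-1]\oplus M\oplus P[1]}=v^{\Lambda({\bf i}^\ast,({\bf m}-{\bf p})^\ast)}X_{I[-1]}X_{M\oplus P[1]}$. Multiplying by $v^{\Lambda({\bf i}^\ast,{\bf p}^\ast)}$ and using $\Lambda({\bf i}^\ast,{\bf p}^\ast)+\Lambda({\bf i}^\ast,({\bf m}-{\bf p})^\ast)=\Lambda({\bf i}^\ast,{\bf m}^\ast)$ recovers the first expression, completing the chain. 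The main obstacle is precisely this bookkeeping in the Hall computation, though the hereditary Euler cancellations keep it short; an alternative route entirely within the quantum torus (expanding via (\ref{qcltz}) and matching coefficients using Lemma~\ref{sjishu} to convert between $\Lambda(^\ast\alpha,\beta^\ast)$, $\Lambda(\alpha^\ast,\beta^\ast)$, and $\lr{\alpha,\beta}$) is also available.
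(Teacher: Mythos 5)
Your proposal is correct and is essentially the paper's own argument. The paper works entirely inside $\mathcal{D}\mathcal{H}_\Lambda^{ec}(\widetilde{\A})$, first establishing $u_{I[-1]}\ast u_{M\oplus P[1]}=u_{I[-1]\oplus M\oplus P[1]}$ in the untwisted algebra (the same hereditary Euler-factor cancellation you describe), then chaining the twisted-product relations (\ref{lgx5}), (\ref{lgx4}) and finally applying $\psi$; you do the analogous bookkeeping partly in $\mathcal{T}_\Lambda$ via the torus-level formulas (\ref{qre5}), (\ref{qre4}) for the first three expressions and lift to the Hall algebra only for the last equality, which amounts to the same computation in a slightly different order.
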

\begin{proof}
In $\mathcal {D}\mathcal {H}_q(\widetilde{\A})$, by definition, \begin{flalign*}u_{I[-1]}\ast u_{M\oplus P[1]}&=q^{\lr{-{\bf i},{\bf m}-{\bf p}}}\frac{1}{\prod\limits_{i\geq0}|\Hom_{D^b(\widetilde{\A})}(I[i-1],{M\oplus P[1]})|^{(-1)^i}}u_{I[-1]\oplus M\oplus P[1]}\\
&=u_{I[-1]\oplus M\oplus P[1]}.\end{flalign*}
Thus, in $\mathcal {D}\mathcal {H}_\Lambda^{ec}(\widetilde{\A})$, we have that
\begin{flalign*}u_{I[-1]}\star u_{M\oplus P[1]}&=v^{-\Lambda({\bf i}^\ast,({\bf m}-{\bf p})^\ast)}u_{I[-1]}\ast u_{M\oplus P[1]}\\&=v^{-\Lambda({\bf i}^\ast,({\bf m}-{\bf p})^\ast)}u_{I[-1]\oplus M\oplus P[1]}.\end{flalign*}
Hence, we have that \begin{flalign*}u_{I[-1]\oplus M\oplus P[1]}&=v^{\Lambda({\bf i}^\ast,({\bf m}-{\bf p})^\ast)}u_{I[-1]}\star u_{M\oplus P[1]}\\
&=v^{\Lambda({\bf i}^\ast,({\bf m}-{\bf p})^\ast)+\Lambda({\bf m}^\ast,{\bf p}^\ast)}u_{I[-1]}\star u_{M}\star u_{P[1]}\\
&=v^{\Lambda({\bf i}^\ast,({\bf m}-{\bf p})^\ast)+\Lambda({\bf m}^\ast,{\bf p}^\ast)-\Lambda({\bf i}^\ast,{\bf m}^\ast)}u_{M\oplus I[-1]}\star u_{P[1]}\\
&=v^{\Lambda({\bf m}^\ast,{\bf p}^\ast)-\Lambda({\bf i}^\ast,{\bf p}^\ast)}u_{M\oplus I[-1]}\star u_{P[1]},\end{flalign*}
which can imply  the desired equations under the homomorphism $\psi$.
\end{proof}

\begin{remark}
We should be reminded that all cluster multiplication formulas in Corollary \ref{Hallcfgs}, which were proved directly in the quantum cluster algebra (cf. \cite{Qin,DX,Fei,D,DSC,DXZ}), are deduced by the relations in Hall algebras.
\end{remark}

\section{Cluster multiplication theorem, $\textrm{I}$}\label{q-cluster}
In this section, we introduce a certain quotient of the derived Hall subalgebra $\mathcal {D}\mathcal {H}_\Lambda^{ec}(\widetilde{\A})$, and then use this quotient algebra to obtain the cluster multiplication theorem for the quantum cluster characters $X_M$ and $X_{P[1]}$.

\subsection{Hall algebra $\mathcal {D}\mathcal {H}_\Lambda^{{cl}_1}(\widetilde{\A})$}
Let $\mathfrak{I}_1$ be the two-sided ideal of the derived Hall subalgebra $\mathcal {D}\mathcal {H}_\Lambda^{ec}(\widetilde{\A})$ generated by the elements in the set $$\mathcal {S}_1:=\{u_{\nu^{-1}(I)[1]}-u_{I[-1]}~|~I\in\I_{\widetilde{\A}}\}.$$ Clearly, $\mathfrak{I}_1\subseteq\Ker \psi$. Let us define \begin{equation}\mathcal {D}\mathcal {H}_\Lambda^{{cl}_1}(\widetilde{\A}):=\mathcal {D}\mathcal {H}_\Lambda^{ec}(\widetilde{\A})/\mathfrak{I}_1.\end{equation} In the following, for any $u_{I[-1]\oplus M\oplus P[1]}\in\mathcal {D}\mathcal {H}_\Lambda^{ec}(\widetilde{\A})$, we use the same notation to denote the image of $u_{I[-1]\oplus M\oplus P[1]}$ in $\mathcal {D}\mathcal {H}_\Lambda^{{cl}_1}(\widetilde{\A})$. Thus we have that $u_{P[1]}=u_{I[-1]}$ in $\mathcal {D}\mathcal {H}_\Lambda^{{cl}_1}(\widetilde{\A})$ for any $I\in\I_{\widetilde{\A}}$ and $P=\nu^{-1}(I)$. Moreover, all the relations in Proposition \ref{elsubalgebra} also hold in $\mathcal {D}\mathcal {H}_\Lambda^{{cl}_1}(\widetilde{\A})$.

\begin{lemma}\label{yinl1}
Let $M\in\widetilde{\A}, I\in\I_{\widetilde{\A}}$ and $P=\nu^{-1}(I)$. Then we have the following equations in $\mathcal {D}\mathcal {H}_\Lambda^{{cl}_1}(\widetilde{\A}):$
\begin{flalign}
u_{M\oplus I[-1]}=v^{-\lr{{\bf p},{\bf m}}}\sum\limits_{[F],[P']}|{}_{P'}\Hom_{\widetilde{\A}}(P,M)_F|u_{F\oplus P'[1]}
\end{flalign}
and \begin{flalign}
u_{M\oplus P[1]}=v^{-\lr{{\bf m},{\bf i}}}\sum\limits_{[G],[I']}|{}_{G}\Hom_{\widetilde{\A}}(M,I)_{I'}|u_{G\oplus I'[-1]}.
\end{flalign}
\end{lemma}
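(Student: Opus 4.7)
The key leverage provided by the quotient is the defining relation $u_{P[1]}=u_{I[-1]}$ in $\mathcal {D}\mathcal {H}_\Lambda^{{cl}_1}(\widetilde{\A})$ whenever $I=\nu(P)$. The plan is to compute the products $u_{I[-1]}\star u_{M}$ and $u_{P[1]}\star u_{M}$ separately using relations (\ref{lgx4}) and (\ref{lgx7}), observe that the left-hand sides are equal under the quotient, and then clean up the twisting scalars. The second equation is obtained dually by computing $u_{M}\star u_{P[1]}$ and $u_{M}\star u_{I[-1]}$ via (\ref{lgx5}) and (\ref{lgx6}), and equating.

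For the first identity, I would begin with the two expressions
\begin{equation*}
u_{I[-1]}\star u_{M}=q^{-\tfrac12\Lambda({\bf i}^\ast,{\bf m}^\ast)}u_{M\oplus I[-1]},\qquad
u_{P[1]}\star u_{M}=q^{-\tfrac12\Lambda({\bf p}^\ast,{\bf m}^\ast)-\lr{{\bf p},{\bf m}}}\!\sum_{[F],[P']}\!|{}_{P'}\Hom_{\widetilde{\A}}(P,M)_F|\,u_{F\oplus P'[1]},
\end{equation*}
and set them equal in $\mathcal {D}\mathcal {H}_\Lambda^{{cl}_1}(\widetilde{\A})$. To collapse the scalars, I would use Remark~\ref{yizhi} (which gives $^\ast{\bf i}={\bf p}^\ast$), together with Lemma~\ref{sjishu}(4) to get $\Lambda({\bf i}^\ast,{\bf m}^\ast)=\Lambda(^\ast{\bf i},^\ast{\bf m})=\Lambda({\bf p}^\ast,^\ast{\bf m})$. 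The discrepancy $\Lambda({\bf p}^\ast,{\bf m}^\ast)-\Lambda({\bf p}^\ast,^\ast{\bf m})=\Lambda({\bf p}^\ast,-B(\widetilde{Q}){\bf m})$ is then evaluated by Lemma~\ref{sjishu}(2), yielding $\Lambda({\bf i}^\ast,{\bf m}^\ast)=\Lambda({\bf p}^\ast,{\bf m}^\ast)+\lr{{\bf p},{\bf m}}$. Substituting this back and cancelling a common factor of $q^{-\tfrac12\Lambda({\bf p}^\ast,{\bf m}^\ast)-\tfrac12\lr{{\bf p},{\bf m}}}$ leaves exactly $u_{M\oplus I[-1]}=v^{-\lr{{\bf p},{\bf m}}}\sum |{}_{P'}\Hom_{\widetilde{\A}}(P,M)_F|\,u_{F\oplus P'[1]}$.

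For the second identity, the analogous comparison of $u_{M}\star u_{P[1]}$ with $u_{M}\star u_{I[-1]}$ produces the scalar difference $\Lambda({\bf m}^\ast,{\bf p}^\ast)-\Lambda({\bf m}^\ast,{\bf i}^\ast)$, which by the same manipulations equals $\lr{{\bf p},{\bf m}}$. To convert this into $\lr{{\bf m},{\bf i}}$ (which is the exponent that appears in the target formula), I would invoke Nakayama duality: since $\widetilde{\A}$ is hereditary, $P$ is projective and $I=\nu(P)$ is injective, one has $\Hom_{\widetilde{\A}}(M,I)\cong D\Hom_{\widetilde{\A}}(P,M)$ and both Ext groups vanish, whence $\lr{{\bf p},{\bf m}}=\dim\Hom_{\widetilde{\A}}(P,M)=\dim\Hom_{\widetilde{\A}}(M,I)=\lr{{\bf m},{\bf i}}$. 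After cancellation the desired formula drops out.

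The routine ingredients (the relations in Proposition~\ref{elsubalgebra} and the quotient identification) do almost all the work; the only genuine obstacle is the careful bookkeeping of the $\Lambda$-coefficients and the conversion between $\lr{{\bf p},{\bf m}}$ and $\lr{{\bf m},{\bf i}}$ via Nakayama duality. Once these are in hand, both equations follow from a direct comparison of the two ways of rewriting $u_{P[1]}\star u_M$ (respectively $u_M\star u_{P[1]}$) in $\mathcal {D}\mathcal {H}_\Lambda^{{cl}_1}(\widetilde{\A})$.
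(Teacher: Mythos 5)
Your proposal matches the paper's proof almost exactly: both exploit the quotient relation $u_{P[1]}=u_{I[-1]}$, equate the two expansions of $u_{P[1]}\star u_M=u_{I[-1]}\star u_M$ arising from (\ref{lgx7}) and (\ref{lgx4}), and reduce everything to the scalar identity $\Lambda({\bf i}^\ast,{\bf m}^\ast)=\Lambda({\bf p}^\ast,{\bf m}^\ast)+\lr{{\bf p},{\bf m}}$, with the second equation handled dually via (\ref{lgx5}) and (\ref{lgx6}). The paper derives the scalar identity by writing $E'(\widetilde{Q})=E(\widetilde{Q})+B(\widetilde{Q})$ in the first argument and invoking Lemma~\ref{sjishu}(2) plus $\lr{{\bf m},{\bf i}}=\lr{{\bf p},{\bf m}}$ (stated without comment), whereas you go through Lemma~\ref{sjishu}(4) and $^\ast{\bf i}={\bf p}^\ast$ first and then supply the Nakayama-duality argument for $\lr{{\bf p},{\bf m}}=\lr{{\bf m},{\bf i}}$ explicitly; these are interchangeable elementary manipulations of the same bilinear-form identities. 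One small slip worth flagging: since ${\bf m}^\ast-{}^\ast{\bf m}=B(\widetilde{Q}){\bf m}$ (not $-B(\widetilde{Q}){\bf m}$), the discrepancy $\Lambda({\bf p}^\ast,{\bf m}^\ast)-\Lambda({\bf p}^\ast,{}^\ast{\bf m})$ equals $\Lambda({\bf p}^\ast,B(\widetilde{Q}){\bf m})=-\lr{{\bf p},{\bf m}}$; the conclusion you then state is nevertheless the correct one, so this is a sign typo rather than a gap.
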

\begin{proof}
By the relation (\ref{lgx7}) in Proposition \ref{elsubalgebra}, we have that in $\mathcal {D}\mathcal {H}_\Lambda^{ec}(\widetilde{\A})$ $$u_{P[1]}\star u_{M}=q^{-\frac{1}{2}\Lambda({\bf p}^\ast,{\bf m}^\ast)-\lr{{\bf p},{\bf m}}}
\sum\limits_{[F],[P']}|{}_{P'}\Hom_{\widetilde{\A}}(P,M)_F|u_{F\oplus P'[1]}.$$
Noting that $u_{P[1]}=u_{I[-1]}$ in $\mathcal {D}\mathcal {H}_\Lambda^{{cl}_1}(\widetilde{\A})$, we obtain that in $\mathcal {D}\mathcal {H}_\Lambda^{{cl}_1}(\widetilde{\A})$
\begin{equation}\label{s1}u_{I[-1]}\star u_{M}=q^{-\frac{1}{2}\Lambda({\bf p}^\ast,{\bf m}^\ast)-\lr{{\bf p},{\bf m}}}
\sum\limits_{[F],[P']}|{}_{P'}\Hom_{\widetilde{\A}}(P,M)_F|u_{F\oplus P'[1]}.\end{equation}

On the other hand, by the relation (\ref{lgx4}) in Proposition \ref{elsubalgebra}, we have that $u_{I[-1]}\star u_{M}=v^{-\Lambda({\bf i}^\ast,{\bf m}^\ast)}u_{M\oplus
I[-1]}$ in $\mathcal {D}\mathcal {H}_\Lambda^{ec}(\widetilde{\A})$, which implies that in $\mathcal {D}\mathcal {H}_\Lambda^{{cl}_1}(\widetilde{\A})$
\begin{equation}\label{s2}u_{I[-1]}\star u_{M}=v^{-\Lambda({\bf i}^\ast,{\bf m}^\ast)}u_{M\oplus
I[-1]}.\end{equation}
Since \begin{flalign*}\Lambda({\bf i}^\ast,{\bf m}^\ast)&=\Lambda({E}'(\widetilde{Q}){\bf i},{E}'(\widetilde{Q}){\bf m})\\&=\Lambda({E}(\widetilde{Q}){\bf i},{E}'(\widetilde{Q}){\bf m})+\Lambda({B}(\widetilde{Q}){\bf i},{E}'(\widetilde{Q}){\bf m})\\&=\Lambda(^\ast{\bf i},{\bf m}^\ast)+\lr{{\bf m},{\bf i}}\\&=\Lambda({\bf p}^\ast,{\bf m}^\ast)+\lr{{\bf p},{\bf m}},\end{flalign*}
combining (\ref{s1}) with (\ref{s2}), we complete the proof of the first equation. Similarly, we can prove the second equation.
\end{proof}
\begin{lemma}\label{yinlx}
Let $M\in\widetilde{\A}, I\in\I_{\widetilde{\A}}$ and $P=\nu^{-1}(I)$. Then we have the following equations in $\mathcal {D}\mathcal {H}_\Lambda^{{cl}_1}(\widetilde{\A}):$
\begin{equation}\label{fc3}
\begin{split}
&(q^{\lr{{\bf m},{\bf i}}}-1)u_{M\oplus I[-1]}\\&=q^{\frac{1}{2}\lr{{\bf p},{\bf m}}}\sum\limits_{\begin{smallmatrix}[F],[P']\\P'\ncong P\end{smallmatrix}}|{}_{P'}\Hom_{\widetilde{\A}}(P,M)_F|u_{F\oplus P'[1]}+\sum\limits_{\begin{smallmatrix}[G],[I']\\I'\ncong I\end{smallmatrix}}|{}_{G}\Hom_{\widetilde{\A}}(M,I)_{I'}|u_{G\oplus I'[-1]}
\end{split}\end{equation}
and
\begin{equation}\label{fc4}
\begin{split}
&(q^{\lr{{\bf p},{\bf m}}}-1)u_{M\oplus P[1]}\\&=\sum\limits_{\begin{smallmatrix}[F],[P']\\P'\ncong P\end{smallmatrix}}|{}_{P'}\Hom_{\widetilde{\A}}(P,M)_F|u_{F\oplus P'[1]}+q^{\frac{1}{2}\lr{{\bf m},{\bf i}}}\sum\limits_{\begin{smallmatrix}[G],[I']\\I'\ncong I\end{smallmatrix}}|{}_{G}\Hom_{\widetilde{\A}}(M,I)_{I'}|u_{G\oplus I'[-1]}.
\end{split}\end{equation}
\end{lemma}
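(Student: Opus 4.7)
The plan is to derive both (\ref{fc3}) and (\ref{fc4}) by iterating the two expansions of Lemma \ref{yinl1}, isolating a single distinguished summand in each of them, and then solving the resulting $2\times 2$ linear system in $u_{M\oplus I[-1]}$ and $u_{M\oplus P[1]}$.

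First I would extract the ``diagonal'' summands. In the sum $\sum_{[F],[P']}|{}_{P'}\Hom_{\widetilde{\A}}(P,M)_F|\,u_{F\oplus P'[1]}$, the terms with $P'\cong P$ correspond to morphisms $f\colon P\to M$ whose kernel is isomorphic to $P$; since any submodule of $P$ of dimension $\dim P$ must equal $P$, such an $f$ is forced to be the zero map, so $F\cong M$ and the unique contribution is $u_{M\oplus P[1]}$. Dually, the $I'\cong I$ part of the other sum contributes exactly $u_{M\oplus I[-1]}$. Writing the remaining off-diagonal tails as $\Sigma_P$ and $\Sigma_I$, Lemma \ref{yinl1} becomes
\begin{equation*}
u_{M\oplus I[-1]}=v^{-\lr{{\bf p},{\bf m}}}(u_{M\oplus P[1]}+\Sigma_P), \qquad u_{M\oplus P[1]}=v^{-\lr{{\bf m},{\bf i}}}(u_{M\oplus I[-1]}+\Sigma_I).
\end{equation*}

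The key numerical input that makes this system solvable is the identity $\lr{{\bf p},{\bf m}}=\lr{{\bf m},{\bf i}}$. Since $\widetilde{\A}$ is hereditary with $P$ projective and $I=\nu(P)$ injective, we have $\lr{{\bf p},{\bf m}}=\dim\Hom_{\widetilde{\A}}(P,M)$ and $\lr{{\bf m},{\bf i}}=\dim\Hom_{\widetilde{\A}}(M,I)$, and the Auslander--Reiten duality $\Hom_{\widetilde{\A}}(M,\nu P)\cong D\Hom_{\widetilde{\A}}(P,M)$ gives the desired equality. Substituting the second relation above into the first and using this identity, the self-coefficient of $u_{M\oplus I[-1]}$ becomes $q^{-\lr{{\bf m},{\bf i}}}$, leading to
\begin{equation*}
(1-q^{-\lr{{\bf m},{\bf i}}})\,u_{M\oplus I[-1]}=v^{-\lr{{\bf p},{\bf m}}}\Sigma_P+q^{-\lr{{\bf m},{\bf i}}}\Sigma_I.
\end{equation*}
Multiplying by $q^{\lr{{\bf m},{\bf i}}}$ and simplifying $q^{\lr{{\bf m},{\bf i}}}v^{-\lr{{\bf p},{\bf m}}}=q^{\frac{1}{2}\lr{{\bf p},{\bf m}}}$ recovers exactly (\ref{fc3}). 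Substituting in the opposite direction and performing the symmetric calculation will yield (\ref{fc4}).

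The main obstacle is not conceptual but bookkeeping: one must confirm that the ``diagonal'' summand truly is a single term (which rests on finite-dimensionality of $P$, not on hereditariness), establish the identity $\lr{{\bf p},{\bf m}}=\lr{{\bf m},{\bf i}}$ cleanly from Auslander--Reiten duality, and then carefully track the half-integer exponents of $v$ through the substitution so that the residual powers on the right come out to exactly $q^{\frac{1}{2}\lr{{\bf p},{\bf m}}}$ and $q^{\frac{1}{2}\lr{{\bf m},{\bf i}}}$ as claimed.
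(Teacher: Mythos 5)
Your proposal is correct and follows essentially the same route as the paper: isolate the $P'\cong P$ and $I'\cong I$ summands in the two expansions of Lemma \ref{yinl1} (each contributing a single term since the relevant morphism is forced to be zero), use $\lr{{\bf p},{\bf m}}=\lr{{\bf m},{\bf i}}$, and solve the resulting $2\times 2$ linear system; the exponent bookkeeping $q^{\lr{{\bf m},{\bf i}}}v^{-\lr{{\bf p},{\bf m}}}=q^{\frac12\lr{{\bf p},{\bf m}}}$ works out exactly as you indicate.
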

\begin{proof}
Using Lemma \ref{yinl1}, we obtain that in $\mathcal {D}\mathcal {H}_\Lambda^{{cl}_1}(\widetilde{\A}):$
\begin{flalign}\label{fc1}v^{\lr{{\bf p},{\bf m}}}u_{M\oplus I[-1]}-u_{M\oplus P[1]}=\sum\limits_{\begin{smallmatrix}[F],[P']\\P'\ncong P\end{smallmatrix}}|{}_{P'}\Hom_{\widetilde{\A}}(P,M)_F|u_{F\oplus P'[1]}\end{flalign} and
\begin{flalign}\label{fc2}v^{\lr{{\bf m},{\bf i}}}u_{M\oplus P[1]}-u_{M\oplus I[-1]}=\sum\limits_{\begin{smallmatrix}[G],[I']\\I'\ncong I\end{smallmatrix}}|{}_{G}\Hom_{\widetilde{\A}}(M,I)_{I'}|u_{G\oplus I'[-1]}.\end{flalign}
Noting that $\lr{{\bf p},{\bf m}}=\lr{{\bf m},{\bf i}}$, by the equations (\ref{fc1}) and (\ref{fc2}), we complete the proof.
\end{proof}

\begin{proposition}\label{tgmcf}
Let $M\in\widetilde{\A}, I\in\I_{\widetilde{\A}}$ and $P=\nu^{-1}(I)$. Then we have the following equations in $\mathcal {D}\mathcal {H}_\Lambda^{{cl}_1}(\widetilde{\A}):$
\begin{flalign*}
&(q^{\lr{{\bf p},{\bf m}}}-1)u_{P[1]}\star u_{M}=\\&q^{\frac{1}{2}\Lambda({\bf m}^\ast,{\bf p}^\ast)}(\sum\limits_{\begin{smallmatrix}[F],[P']\\P'\ncong P\end{smallmatrix}}|{}_{P'}\Hom_{\widetilde{\A}}(P,M)_F|u_{F\oplus P'[1]}+q^{-\frac{1}{2}\lr{{\bf m},{\bf i}}}\sum\limits_{\begin{smallmatrix}[G],[I']\\I'\ncong I\end{smallmatrix}}|{}_{G}\Hom_{\widetilde{\A}}(M,I)_{I'}|u_{G\oplus I'[-1]})
\end{flalign*}
and \begin{flalign*}
&(q^{\lr{{\bf m},{\bf i}}}-1)u_{M}\star u_{I[-1]}=\\&q^{\frac{1}{2}\Lambda({\bf i}^\ast,{\bf m}^\ast)}(q^{-\frac{1}{2}\lr{{\bf p},{\bf m}}}\sum\limits_{\begin{smallmatrix}[F],[P']\\P'\ncong P\end{smallmatrix}}|{}_{P'}\Hom_{\widetilde{\A}}(P,M)_F|u_{F\oplus P'[1]}+\sum\limits_{\begin{smallmatrix}[G],[I']\\I'\ncong I\end{smallmatrix}}|{}_{G}\Hom_{\widetilde{\A}}(M,I)_{I'}|u_{G\oplus I'[-1]}).
\end{flalign*}
\end{proposition}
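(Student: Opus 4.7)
The plan is to derive both identities directly from the multiplication relations (\ref{lgx7}) and (\ref{lgx6}) of Proposition \ref{elsubalgebra}, combined with Lemma \ref{yinlx}. The strategy in each case is to expand the relevant product, peel off the one ``degenerate'' summand (where the morphism is forced to be zero), and then match the remainder against the corresponding identity of Lemma \ref{yinlx}.

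Concretely, for the first equation I would expand $u_{P[1]}\star u_M$ using (\ref{lgx7}). The summand indexed by $P'\cong P$ corresponds to morphisms $f\colon P\to M$ with $\Ker f\cong P$, which are forced to vanish by the dimension count $\dim\im f=\dim P-\dim\Ker f=0$; hence $|{}_{P}\Hom_{\widetilde{\A}}(P,M)_{M}|=1$ and this summand contributes exactly $u_{M\oplus P[1]}$. Rearranging yields
\begin{flalign*}
q^{\frac{1}{2}\Lambda({\bf p}^\ast,{\bf m}^\ast)+\lr{{\bf p},{\bf m}}}u_{P[1]}\star u_{M}= u_{M\oplus P[1]}+\sum_{\substack{[F],[P']\\P'\ncong P}}|{}_{P'}\Hom_{\widetilde{\A}}(P,M)_{F}|\,u_{F\oplus P'[1]}.
\end{flalign*}
Multiplying through by $(q^{\lr{{\bf p},{\bf m}}}-1)$, using $\Lambda({\bf m}^\ast,{\bf p}^\ast)=-\Lambda({\bf p}^\ast,{\bf m}^\ast)$, and substituting equation (\ref{fc4}) of Lemma \ref{yinlx} to rewrite $(q^{\lr{{\bf p},{\bf m}}}-1)u_{M\oplus P[1]}$, the two contributions involving $\sum_{P'\ncong P}|\cdots|\,u_{F\oplus P'[1]}$ combine (their coefficients telescoping to $1$), while the $\sum_{I'\ncong I}$ term picks up the exponent $q^{-\lr{{\bf p},{\bf m}}+\frac{1}{2}\lr{{\bf m},{\bf i}}}$. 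The identity $\lr{{\bf p},{\bf m}}=\lr{{\bf m},{\bf i}}$, a consequence of the Nakayama duality $\Hom_{\widetilde{\A}}(P,M)\cong D\Hom_{\widetilde{\A}}(M,\nu(P))=D\Hom_{\widetilde{\A}}(M,I)$ in the hereditary category, collapses this exponent to $-\frac{1}{2}\lr{{\bf m},{\bf i}}$, producing exactly the stated right-hand side.

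The second equation is proved by an entirely symmetric argument: start from (\ref{lgx6}) for $u_{M}\star u_{I[-1]}$, separate the $I'\cong I$ summand (where $g\colon M\to I$ is again forced to be zero by the same dimension count, so that $G=M$ and the summand equals $u_{M\oplus I[-1]}$), multiply by $(q^{\lr{{\bf m},{\bf i}}}-1)$, and substitute using equation (\ref{fc3}) of Lemma \ref{yinlx} in place of (\ref{fc4}). The main obstacle is purely computational bookkeeping: tracking the various half-integer powers of $q$ and $v$ and verifying that the skew-symmetry of $\Lambda$ and the identity $\lr{{\bf p},{\bf m}}=\lr{{\bf m},{\bf i}}$ conspire so that every coefficient matches. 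Conceptually the proposition is simply a repackaging of Lemma \ref{yinlx} into the normal-ordered products $u_{P[1]}\star u_{M}$ and $u_{M}\star u_{I[-1]}$, which is the form that will be needed in the subsequent derivation of the cluster multiplication theorem.
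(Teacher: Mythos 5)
Your argument is correct and reaches the same destination as the paper, but by a slightly different route. You expand $u_{P[1]}\star u_M$ via the full-sum relation (\ref{lgx7}), isolate the degenerate $P'\cong P$ summand (which is indeed $u_{M\oplus P[1]}$ since $\Ker f\cong P$ forces $f=0$), and then substitute equation (\ref{fc4}) of Lemma \ref{yinlx}, relying on $\lr{{\bf p},{\bf m}}=\lr{{\bf m},{\bf i}}$ to collapse the exponents. The paper instead invokes the quotient relation $u_{P[1]}=u_{I[-1]}$ in $\mathcal {D}\mathcal {H}_\Lambda^{{cl}_1}(\widetilde{\A})$ at the very first step, which lets it use the one-term relation (\ref{lgx4}) to convert $u_{I[-1]}\star u_M$ directly to $v^{-\Lambda({\bf i}^\ast,{\bf m}^\ast)}u_{M\oplus I[-1]}$; it then substitutes the other half of Lemma \ref{yinlx}, namely (\ref{fc3}), and needs only the identity $\Lambda({\bf i}^\ast,{\bf m}^\ast)=\Lambda({\bf p}^\ast,{\bf m}^\ast)+\lr{{\bf p},{\bf m}}$ already established in Lemma \ref{yinl1}. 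The paper's route avoids the peeling-off and cancellation bookkeeping entirely; your route is marginally longer but arrives at the same coefficients. One small wording imprecision in your write-up: after substituting (\ref{fc4}) the coefficients of the $\sum_{P'\ncong P}$ contributions combine to $q^{\lr{{\bf p},{\bf m}}}$, not $1$; it is only after absorbing the prefactor $q^{-\frac12\Lambda({\bf p}^\ast,{\bf m}^\ast)-\lr{{\bf p},{\bf m}}}$ that the $\lr{{\bf p},{\bf m}}$ cancels and the net coefficient of that sum inside the parentheses becomes $1$. The symmetric second argument via (\ref{lgx6}) and (\ref{fc3}) is likewise sound.
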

\begin{proof}
Noting that $u_{P[1]}=u_{I[-1]}$ in $\mathcal {D}\mathcal {H}_\Lambda^{{cl}_1}(\widetilde{\A})$, we obtain that in $\mathcal {D}\mathcal {H}_\Lambda^{{cl}_1}(\widetilde{\A})$
\begin{flalign*}
(q^{\lr{{\bf p},{\bf m}}}-1)u_{P[1]}\star u_{M}&=(q^{\lr{{\bf m},{\bf i}}}-1)u_{I[-1]}\star u_{M}\\
&=(q^{\lr{{\bf m},{\bf i}}}-1)v^{-\Lambda({\bf i}^\ast,{\bf m}^\ast)}u_{M\oplus I[-1]}\\
&=(q^{\lr{{\bf m},{\bf i}}}-1)v^{\Lambda({\bf m}^\ast,{\bf p}^\ast)-\lr{{\bf m},{\bf i}}}u_{M\oplus I[-1]}.\end{flalign*}
Then using the equation (\ref{fc3}), we complete the proof of the first equation.  Noting that $\Lambda({\bf m}^\ast,{\bf p}^\ast)=\Lambda(^\ast{\bf m},{^\ast{\bf i}})+\lr{{\bf m},{\bf i}}=\Lambda({\bf m}^\ast,{{\bf i}^\ast})+\lr{{\bf p},{\bf m}}$, similarly, we
can prove the second equation.
\end{proof}

\subsection{Cluster multiplication formulas via $\mathcal {D}\mathcal {H}_\Lambda^{{cl}_1}(\widetilde{\A})$}
Since $\mathfrak{I}_1\subseteq\Ker \psi$, the homomorphism of algebras $\psi:\mathcal {D}\mathcal {H}_\Lambda^{ec}(\widetilde{\A})\longrightarrow\mathcal{T}_\Lambda$ induces a homomorphism of algebras \begin{equation}\label{xuyaomap}\varphi_1:\mathcal {D}\mathcal {H}_\Lambda^{{cl}_1}(\widetilde{\A})\longrightarrow\mathcal{T}_\Lambda.\end{equation}

Let $\mathcal{A}\mathcal{H}_q^{\circ}(\widetilde{Q})$ be the subalgebra of $\mathcal{T}_\Lambda$ generated by the quantum cluster characters $\{X_M, X_{P[1]}~|~M\in\widetilde{\A}, P\in\P_{\widetilde{\A}}\}$, which also equals to that generated by $\{X_M, X_{I[-1]}~|~M\in\widetilde{\A}, I\in\I_{\widetilde{\A}}\}$.
\begin{proposition}\label{morphism1}
The map $\varphi_1:\mathcal {D}\mathcal {H}_\Lambda^{{cl}_1}(\widetilde{\A})\longrightarrow\mathcal{A}\mathcal{H}_q^{\circ}(\widetilde{Q})$ defined by
$$\varphi_1(u_{I[-1]\oplus M\oplus P[1]})=X_{I[-1]\oplus M\oplus P[1]},$$ for any $M\in\widetilde{\A}$, $I\in\I_{\widetilde{\A}}$ and $P\in\P_{\widetilde{\A}}$, is a surjective homomorphism of algebras.
\end{proposition}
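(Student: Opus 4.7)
The plan is to realize $\varphi_1$ as the algebra homomorphism induced by $\psi$ upon passing to the quotient by $\mathfrak{I}_1$, and then read off surjectivity from the compatible systems of generators on both sides. Everything reduces to two observations: (i) $\mathfrak{I}_1 \subseteq \Ker\psi$, which makes $\varphi_1$ well-defined; and (ii) the generators of $\mathcal{D}\mathcal{H}_\Lambda^{{cl}_1}(\widetilde{\A})$ map onto the generators of $\mathcal{A}\mathcal{H}_q^{\circ}(\widetilde{Q})$.

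First I would record that the inclusion $\mathfrak{I}_1 \subseteq \Ker\psi$, which is already asserted at the definition of $\mathcal{D}\mathcal{H}_\Lambda^{{cl}_1}(\widetilde{\A})$, follows directly from Theorem \ref{mainresult} and Remark \ref{yizhi}. Indeed, taking $M=0$ in formula \eqref{qcltz} gives $\psi(u_{P[1]}) = X^{{\bf p}^\ast}$ and $\psi(u_{I[-1]}) = X^{{}^\ast{\bf i}}$. For $P=\nu^{-1}(I)$, Remark \ref{yizhi} says $\soc I \cong \top P$, hence ${}^\ast{\bf i} = {\bf p}^\ast$, so each generator $u_{\nu^{-1}(I)[1]} - u_{I[-1]}$ of $\mathcal{S}_1$ lies in $\Ker\psi$. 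Since $\psi$ is an algebra homomorphism (Theorem \ref{mainresult}), the two-sided ideal $\mathfrak{I}_1$ is contained in $\Ker\psi$, and $\psi$ descends to a unique algebra homomorphism $\varphi_1 : \mathcal{D}\mathcal{H}_\Lambda^{{cl}_1}(\widetilde{\A}) \longrightarrow \mathcal{T}_\Lambda$. The equality $\varphi_1(u_{I[-1]\oplus M \oplus P[1]}) = X_{I[-1]\oplus M \oplus P[1]}$ is then just the formula for $\psi$ combined with the definition \eqref{qcltz} of the quantum cluster character.

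Finally, for surjectivity, I would identify the image of $\varphi_1$ with $\mathcal{A}\mathcal{H}_q^{\circ}(\widetilde{Q})$ by matching generators. By Proposition \ref{elsubalgebra}, the algebra $\mathcal{D}\mathcal{H}_\Lambda^{ec}(\widetilde{\A})$, and hence its quotient $\mathcal{D}\mathcal{H}_\Lambda^{{cl}_1}(\widetilde{\A})$, is generated by $\{u_M, u_{P[1]}, u_{I[-1]} \mid M \in \widetilde{\A}, P \in \P_{\widetilde{\A}}, I \in \I_{\widetilde{\A}}\}$, which maps under $\varphi_1$ to $\{X_M, X_{P[1]}, X_{I[-1]}\}$; all of these elements lie in $\mathcal{A}\mathcal{H}_q^{\circ}(\widetilde{Q})$ by its second description as generated by $\{X_M, X_{I[-1]}\}$. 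Conversely, the preferred generators $X_M$ and $X_{P[1]}$ of $\mathcal{A}\mathcal{H}_q^{\circ}(\widetilde{Q})$ are the images of $u_M$ and $u_{P[1]}$, so $\varphi_1$ is surjective onto $\mathcal{A}\mathcal{H}_q^{\circ}(\widetilde{Q})$.

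There is no real obstacle here beyond the identification $X_{P[1]} = X_{\nu(P)[-1]}$ made explicit in Remark \ref{yizhi}; its role is precisely to ensure that the Nakayama relation $u_{P[1]} = u_{\nu(P)[-1]}$, imposed by quotienting out $\mathfrak{I}_1$, is already respected by the integrated map $\psi$. Once this is checked, the proposition is a formal consequence of Theorem \ref{mainresult} and the two descriptions of $\mathcal{A}\mathcal{H}_q^{\circ}(\widetilde{Q})$.
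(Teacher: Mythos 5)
Your proof is correct and follows essentially the same route as the paper: both take the already-constructed homomorphism $\varphi_1$ of (\ref{xuyaomap}) from the quotient to $\mathcal{T}_\Lambda$ and then check the image equals $\mathcal{A}\mathcal{H}_q^{\circ}(\widetilde{Q})$. The one minor difference is that for the containment step the paper cites Corollary \ref{shuyu} (which expresses $X_{I[-1]\oplus M\oplus P[1]}$ explicitly as a scalar multiple of $X_{I[-1]}X_M X_{P[1]}$), while you instead invoke the generator presentation of Proposition \ref{elsubalgebra}; both are valid, and the logic is otherwise identical.
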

\begin{proof}
We only need to note that $X_{I[-1]\oplus M\oplus P[1]}\in\mathcal{A}\mathcal{H}_q^{\circ}(\widetilde{Q})$ by Corollary \ref{shuyu}.
\end{proof}

Let $\mathcal{A}\mathcal{H}_q({Q})$ be the subalgebra of $\mathcal{A}\mathcal{H}_q^{\circ}(\widetilde{Q})$ generated by the quantum cluster characters $\{X_M, X_{P[1]}~|~M\in{\A}, P\in\P_{\widetilde{\A}}\}$, which also equals to that generated by $\{X_M, X_{I[-1]}~|~M\in{\A}, I\in\I_{\widetilde{\A}}\}$. Let $\mathcal {D}\mathcal {H}_\Lambda^{{\tilde{c}l}_1}({\A})$ be the subalgebra of $\mathcal {D}\mathcal {H}_\Lambda^{{cl}_1}(\widetilde{\A})$ spanned by the elements $\{u_{I[-1]\oplus M\oplus P[1]}~|~~M\in{\A},I\in\I_{\widetilde{\A}},P\in\P_{\widetilde{\A}}\}$.

\begin{proposition}
The map $\tilde{\varphi}_1:\mathcal {D}\mathcal {H}_\Lambda^{{\tilde{c}l}_1}({\A})\longrightarrow\mathcal{A}\mathcal{H}_q({Q})$ defined by
$$\tilde{\varphi}_1(u_{I[-1]\oplus M\oplus P[1]})=X_{I[-1]\oplus M\oplus P[1]},$$ for any $M\in{\A}$, $I\in\I_{\widetilde{\A}}$ and $P\in\P_{\widetilde{\A}}$, is a surjective homomorphism of algebras.
\end{proposition}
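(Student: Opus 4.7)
The plan is to realize $\tilde{\varphi}_1$ as the restriction of the surjective algebra homomorphism $\varphi_1$ from Proposition~\ref{morphism1}, and then identify the image of this restriction with $\mathcal{A}\mathcal{H}_q(Q)$. The argument therefore splits into three steps: first, verify that $\mathcal {D}\mathcal {H}_\Lambda^{{\tilde{c}l}_1}({\A})$ is genuinely a subalgebra of $\mathcal {D}\mathcal {H}_\Lambda^{{cl}_1}(\widetilde{\A})$; second, observe that $\tilde{\varphi}_1$ is then automatically a well-defined algebra homomorphism, being the restriction of $\varphi_1$; third, check that its image coincides with $\mathcal{A}\mathcal{H}_q(Q)$.

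The decisive observation for the subalgebra step is that $\A$ is the full subcategory of $\widetilde{\A}$ consisting of modules supported on the vertices of $Q$, and such a subcategory is closed in $\widetilde{\A}$ under subobjects, quotients, and extensions. Running through the defining relations (\ref{lgx2})--(\ref{lgx8}) of Proposition~\ref{elsubalgebra}, every product of two generators whose $\widetilde{\A}$-component lies in $\A$ produces a linear combination of basis elements whose new $\widetilde{\A}$-component is either an extension of two $\A$-modules (via (\ref{lgx3})), a cokernel of a map from a projective into an $\A$-module (via (\ref{lgx7})), a kernel of a map from an $\A$-module into an injective (via (\ref{lgx6})), or is left unchanged (the remaining relations). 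In each case the resulting module still lies in $\A$, so $\mathcal {D}\mathcal {H}_\Lambda^{{\tilde{c}l}_1}({\A})$ is indeed closed under the twisted multiplication $\star$.

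Once closure is in place, $\tilde{\varphi}_1$ is literally the restriction of $\varphi_1$, hence an algebra homomorphism. For surjectivity, note that $u_M$ for $M\in\A$ and $u_{P[1]}$ for $P\in\P_{\widetilde{\A}}$ all lie in $\mathcal {D}\mathcal {H}_\Lambda^{{\tilde{c}l}_1}({\A})$ and are sent to $X_M$ and $X_{P[1]}$ respectively, so the image contains every generator of $\mathcal{A}\mathcal{H}_q(Q)$. Conversely, Corollary~\ref{shuyu} writes each $X_{I[-1]\oplus M\oplus P[1]}$ with $M\in\A$ as an explicit scalar multiple of $X_{I[-1]}X_M X_{P[1]}$, and by Remark~\ref{yizhi} one may rewrite $X_{I[-1]}=X_{\nu^{-1}(I)[1]}$ with $\nu^{-1}(I)\in\P_{\widetilde{\A}}$; hence every spanning element of $\mathcal {D}\mathcal {H}_\Lambda^{{\tilde{c}l}_1}({\A})$ is sent into $\mathcal{A}\mathcal{H}_q(Q)$, yielding the desired equality of images.

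The only nontrivial point is the closure verification in Step~1, and even there the work is pure bookkeeping through the eight defining relations together with the extension-closure of $\A$ in $\widetilde{\A}$; I therefore expect no substantial obstacle.
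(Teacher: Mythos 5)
Your proof is correct and takes essentially the same approach the paper intends (the paper states this proposition without proof, relying on the reader to see it as the restriction of $\varphi_1$ from Proposition~\ref{morphism1}). Your added verification that the span defining $\mathcal{D}\mathcal{H}_\Lambda^{\tilde{c}l_1}(\A)$ is closed under $\star$ — using that $\A$ is closed in $\widetilde{\A}$ under subobjects, quotients and extensions — together with the use of Corollary~\ref{shuyu} and Remark~\ref{yizhi} to pin down the image as $\mathcal{A}\mathcal{H}_q(Q)$, is exactly the bookkeeping the paper leaves implicit.
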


\begin{remark}The quantum cluster algebras of $Q$ with coefficients are just the subalgebras of those of $\widetilde{Q}$ without coefficients.
Therefore, in what follows, we focus on studying the multiplication formulas in $\mathcal{A}\mathcal{H}_q^{\circ}(\widetilde{Q})$. To get the corresponding formulas in $\mathcal{A}\mathcal{H}_q({Q})$, we only need to restrict the modules $M$ that we consider to $\A$.\end{remark}

Each formula in Lemma \ref{yinl1}, Lemma \ref{yinlx} and Proposition \ref{tgmcf} is mapped to a corresponding formula in $\mathcal{T}_\Lambda$ under the homomorphism (\ref{xuyaomap}). In order to save space, we only write the formula corresponding to Proposition \ref{tgmcf} as follows:
\begin{theorem}\label{dyggs}
Let $M\in\widetilde{\A}, I\in\I_{\widetilde{\A}}$ and $P=\nu^{-1}(I)$. Then we have the following equations in $\mathcal{T}_\Lambda:$
\begin{equation*}\label{xjyan1}\begin{split}
&(q^{\lr{{\bf p},{\bf m}}}-1)X_{P[1]} X_{M}=\\&q^{\frac{1}{2}\Lambda({\bf m}^\ast,{\bf p}^\ast)}(\sum\limits_{\begin{smallmatrix}[F],[P']\\P'\ncong P\end{smallmatrix}}|{}_{P'}\Hom_{\widetilde{\A}}(P,M)_F|X_{F\oplus P'[1]}+q^{-\frac{1}{2}\lr{{\bf m},{\bf i}}}\sum\limits_{\begin{smallmatrix}[G],[I']\\I'\ncong I\end{smallmatrix}}|{}_{G}\Hom_{\widetilde{\A}}(M,I)_{I'}|X_{G\oplus I'[-1]})
\end{split}\end{equation*}
and \begin{flalign*}
&(q^{\lr{{\bf m},{\bf i}}}-1)X_{M} X_{I[-1]}=\\&q^{\frac{1}{2}\Lambda({\bf i}^\ast,{\bf m}^\ast)}(q^{-\frac{1}{2}\lr{{\bf p},{\bf m}}}\sum\limits_{\begin{smallmatrix}[F],[P']\\P'\ncong P\end{smallmatrix}}|{}_{P'}\Hom_{\widetilde{\A}}(P,M)_F|X_{F\oplus P'[1]}+\sum\limits_{\begin{smallmatrix}[G],[I']\\I'\ncong I\end{smallmatrix}}|{}_{G}\Hom_{\widetilde{\A}}(M,I)_{I'}|X_{G\oplus I'[-1]}).
\end{flalign*}
\end{theorem}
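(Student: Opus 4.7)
The plan is to obtain Theorem \ref{dyggs} as the image, under the surjective algebra homomorphism $\varphi_1:\mathcal{DH}_\Lambda^{cl_1}(\widetilde{\A})\to \mathcal{AH}_q^\circ(\widetilde{Q})$ of Proposition \ref{morphism1}, of the two identities in Proposition \ref{tgmcf}. Since $\varphi_1$ sends each generator $u_{I[-1]\oplus M\oplus P[1]}$ to the corresponding quantum cluster character $X_{I[-1]\oplus M\oplus P[1]}$ and respects the multiplication $\star$, every identity that holds in the quotient algebra $\mathcal{DH}_\Lambda^{cl_1}(\widetilde{\A})$ is automatically translated into an identity among quantum cluster characters in the quantum torus $\mathcal{T}_\Lambda$.

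Concretely, I would proceed as follows. First, I would use that in $\mathcal{DH}_\Lambda^{cl_1}(\widetilde{\A})$ one has $u_{P[1]}=u_{I[-1]}$ (by construction of the ideal $\mathfrak{I}_1$ with generating set $\mathcal{S}_1$), together with the identifications $\varphi_1(u_{P[1]})=X_{P[1]}$, $\varphi_1(u_M)=X_M$ and $\varphi_1(u_{I[-1]})=X_{I[-1]}$, to conclude $\varphi_1(u_{P[1]}\star u_M)=X_{P[1]}X_M$ and $\varphi_1(u_M\star u_{I[-1]})=X_M X_{I[-1]}$ in $\mathcal{T}_\Lambda$. Next, I would apply $\varphi_1$ term by term to the right-hand sides of the two equations in Proposition \ref{tgmcf}: since $\varphi_1$ is $\mathbb{Z}[v,v^{-1}]$-linear, the scalar prefactors $q^{\lr{{\bf p},{\bf m}}}-1$, $q^{\lr{{\bf m},{\bf i}}}-1$, $q^{\frac{1}{2}\Lambda({\bf m}^\ast,{\bf p}^\ast)}$, $q^{\frac{1}{2}\Lambda({\bf i}^\ast,{\bf m}^\ast)}$, $q^{-\frac{1}{2}\lr{{\bf m},{\bf i}}}$, $q^{-\frac{1}{2}\lr{{\bf p},{\bf m}}}$ pass through $\varphi_1$ unchanged, while the summands $u_{F\oplus P'[1]}$ and $u_{G\oplus I'[-1]}$ are sent directly to $X_{F\oplus P'[1]}$ and $X_{G\oplus I'[-1]}$. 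Comparing the resulting expressions with the two displayed formulas then finishes the argument.

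As this is essentially a bookkeeping exercise given the framework built in the previous sections, there is no significant obstacle at this stage. The only point that deserves a brief verification is that $\varphi_1$ really does map the abstract products $u_{P[1]}\star u_M$ and $u_M\star u_{I[-1]}$ to the ordinary products of quantum cluster characters in $\mathcal{T}_\Lambda$, which is immediate from $\varphi_1$ being an algebra homomorphism. The conceptual work actually lies upstream: in Proposition \ref{tgmcf}, which rests on Lemmas \ref{yinl1} and \ref{yinlx} and, ultimately, on the Hall-algebra relations (\ref{lgx7}) and (\ref{lgx6}) together with the identification $u_{P[1]}=u_{I[-1]}$ forced by the ideal $\mathfrak{I}_1$. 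With those ingredients in place, Theorem \ref{dyggs} is just the image of Proposition \ref{tgmcf} under $\varphi_1$.
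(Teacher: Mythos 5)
Your proposal is correct and matches the paper's own argument exactly: the paper states that ``Each formula in Lemma \ref{yinl1}, Lemma \ref{yinlx} and Proposition \ref{tgmcf} is mapped to a corresponding formula in $\mathcal{T}_\Lambda$ under the homomorphism (\ref{xuyaomap})'' and then records Theorem \ref{dyggs} as the image of Proposition \ref{tgmcf} under $\varphi_1$, precisely as you describe.
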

The following corollary is a generalization of \cite[Theorem 4.8]{Rupel1}.
\begin{corollary}\label{mcf1}
Let $M\in\widetilde{\A}, I\in\I_{\widetilde{\A}}$ and $P=\nu^{-1}(I)$. Assume that there exist unique (up to scalar) morphisms $f\in\Hom_{\widetilde{\A}}(M,I)$ and $g\in\Hom_{\widetilde{\A}}(P,M)$, in particular, $$\dim_{{\rm End}(I)}\Hom_{\widetilde{\A}}(M,I)=\dim_{{\rm End}(P)}\Hom_{\widetilde{\A}}(P,M)=1.$$ Then we have the following equations in $\mathcal{T}_\Lambda:$
\begin{flalign*}
X_{P[1]} X_M=q^{\frac{1}{2}\Lambda({\bf m}^\ast,{\bf p}^\ast)}(X_{F\oplus P'[1]}+q^{-\frac{1}{2}\lr{{\bf i},{\bf i}}}X_{G\oplus I'[-1]})
\end{flalign*}
and
\begin{flalign*}
X_M X_{I[-1]}=q^{\frac{1}{2}\Lambda({\bf i}^\ast,{\bf m}^\ast)}(q^{-\frac{1}{2}\lr{{\bf p},{\bf p}}}X_{F\oplus P'[1]}+X_{G\oplus I'[-1]}),
\end{flalign*}
where $G=\Ker f, I'=\Coker f$, $P'=\Ker g, F=\Coker g$.
\end{corollary}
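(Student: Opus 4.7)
The plan is to derive this corollary by specializing Theorem \ref{dyggs} under the one-dimensional Hom hypotheses, so the two sums on the right-hand sides of that theorem collapse to single terms. The first step is to collect several identities among Euler-form pairings. Since $P\in\P_{\widetilde{\A}}$ and $I\in\I_{\widetilde{\A}}$ in the hereditary category $\widetilde{\A}$, we have $\Ext^1_{\widetilde{\A}}(P,M)=0=\Ext^1_{\widetilde{\A}}(M,I)$, hence $\lr{{\bf p},{\bf m}}=\dim_k\Hom_{\widetilde{\A}}(P,M)$ and $\lr{{\bf m},{\bf i}}=\dim_k\Hom_{\widetilde{\A}}(M,I)$. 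The rank-one conditions give $\Hom_{\widetilde{\A}}(P,M)\cong\End(P)$ and $\Hom_{\widetilde{\A}}(M,I)\cong\End(I)$ as one-sided modules, so $\lr{{\bf p},{\bf m}}=\lr{{\bf p},{\bf p}}$ and $\lr{{\bf m},{\bf i}}=\lr{{\bf i},{\bf i}}$. Finally, the Nakayama equivalence $\nu:\P_{\widetilde{\A}}\xrightarrow{\sim}\I_{\widetilde{\A}}$ with $\nu(P)=I$ yields $\End(P)\cong\End(I)$, so all four quantities $\lr{{\bf p},{\bf m}}$, $\lr{{\bf m},{\bf i}}$, $\lr{{\bf p},{\bf p}}$, $\lr{{\bf i},{\bf i}}$ coincide.

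The second step is to collapse each of the two sums. Under the one-dimensional hypothesis, in the species setting $\End(P)$ and $\End(I)$ are division algebras, so every nonzero morphism in $\Hom_{\widetilde{\A}}(P,M)$ differs from $g$ by composition with an invertible endomorphism and therefore shares the kernel $P'=\Ker g$ and cokernel $F=\Coker g$; dually every nonzero morphism in $\Hom_{\widetilde{\A}}(M,I)$ shares the kernel $G=\Ker f$ and cokernel $I'=\Coker f$. Hence
\begin{align*}
\sum_{[F''],[P'']:\,P''\ncong P}|{}_{P''}\Hom_{\widetilde{\A}}(P,M)_{F''}|\,X_{F''\oplus P''[1]}
&=(|\Hom_{\widetilde{\A}}(P,M)|-1)\,X_{F\oplus P'[1]}\\
&=(q^{\lr{{\bf p},{\bf m}}}-1)\,X_{F\oplus P'[1]},
\end{align*}
and a parallel computation yields $(q^{\lr{{\bf m},{\bf i}}}-1)\,X_{G\oplus I'[-1]}$ for the second sum.

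The third step is substitution and cancellation. Plugging the collapsed sums into the two formulas of Theorem \ref{dyggs} and dividing by the common nonzero factor $q^{\lr{{\bf p},{\bf m}}}-1=q^{\lr{{\bf m},{\bf i}}}-1$ (nonzero because the Hom spaces are nonzero by assumption) produces
\[
X_{P[1]}X_M=q^{\frac{1}{2}\Lambda({\bf m}^\ast,{\bf p}^\ast)}\bigl(X_{F\oplus P'[1]}+q^{-\frac{1}{2}\lr{{\bf m},{\bf i}}}X_{G\oplus I'[-1]}\bigr)
\]
and the parallel identity for $X_MX_{I[-1]}$ with the factor $q^{-\frac{1}{2}\lr{{\bf p},{\bf m}}}$ attached to $X_{F\oplus P'[1]}$. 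Rewriting $\lr{{\bf m},{\bf i}}=\lr{{\bf i},{\bf i}}$ in the first and $\lr{{\bf p},{\bf m}}=\lr{{\bf p},{\bf p}}$ in the second, via the identities from step one, produces exactly the formulas claimed. The only delicate point in this proposal is the key reduction in step two—that the rank-one hypothesis really forces a single kernel/cokernel pair among all nonzero morphisms; this is precisely where one uses that $\End(P)$ and $\End(I)$ are division algebras. Everything else is bookkeeping with the Euler form.
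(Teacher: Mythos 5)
Your proposal is correct and follows essentially the same route as the paper: specialize Theorem \ref{dyggs}, establish $\lr{{\bf p},{\bf m}}=\lr{{\bf p},{\bf p}}$ and $\lr{{\bf m},{\bf i}}=\lr{{\bf i},{\bf i}}$ from the rank-one hypothesis, collapse each sum to a single term of size $q^{\lr{{\bf p},{\bf p}}}-1$ (resp. $q^{\lr{{\bf i},{\bf i}}}-1$), use $\End(P)\cong\End(I)$, and divide by the common nonzero scalar. The paper records these observations more tersely, while you make explicit the division-algebra argument showing that all nonzero morphisms share the same kernel and cokernel, which is the point the paper compresses into the single line $|{}_{P'}\Hom_{\widetilde{\A}}(P,M)_F|=q^{\lr{{\bf p},{\bf p}}}-1$.
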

\begin{proof}
Since $\dim_{{\rm End}(I)}\Hom_{\widetilde{\A}}(M,I)=\dim_{{\rm End}(P)}\Hom_{\widetilde{\A}}(P,M)=1$, we obtain that $\lr{{\bf p},{\bf m}}=\dim_k\Hom_{\widetilde{\A}}(P,M)=\dim_k\End(P)=\lr{{\bf p},{\bf p}}$, similarly, $\lr{{\bf m},{\bf i}}=\lr{{\bf i},{\bf i}}$. Noting that $|{}_{P'}\Hom_{\widetilde{\A}}(P,M)_F|=q^{\lr{{\bf p},{\bf p}}}-1$, $|{}_{G}\Hom_{\widetilde{\A}}(M,I)_{I'}|=q^{\lr{{\bf i},{\bf i}}}-1$ and $\End(P)\cong\End(I)$, we complete the proof.
\end{proof}

\section{Multiplication formulas in quotients of Hall algebras}
In this section, we introduce another subalgebra of the derived Hall algebra and show a multiplication formula in its certain quotient.

\subsection{Hall algebra $\mathcal {D}\mathcal {H}_\Lambda^{{cl}_2}(\widetilde{\A})$}
Let $\mathcal {D}\mathcal {H}_q^{\langle0,1]}(\widetilde{\A})$ be the subalgebra of the derived Hall algebra $\mathcal {D}\mathcal {H}_q(\widetilde{\A})$ spanned by all elements $u_{M[-1]\oplus N\oplus P[1]}$ with $M,N\in\widetilde{\A}$ and $P\in\P_{\widetilde{\A}}$. Define $\mathcal {D}\mathcal {H}_\Lambda^{\langle0,1]}(\widetilde{\A})$
to be the same module as $\mathcal {D}\mathcal {H}_q^{\langle0,1]}(\widetilde{\A})$ but with the twisted
multiplication defined on basis elements by
{\begin{equation}
\begin{split}
   u_{M[-1]\oplus N\oplus P[1]}\star u_{X[-1]\oplus Y\oplus Q[1]}:=
   v^{\Lambda(({\bf n}-{\bf m}-{\bf p})^\ast,(\bf{y}
   -{\bf x}-\bf{q})^\ast)}
   u_{M[-1]\oplus N\oplus P[1]}\ast u_{X[-1]\oplus Y\oplus Q[1]},\end{split}
\end{equation}}where $M, N, X, Y\in\widetilde{\A}$ and $P, Q\in\I_{\widetilde{\A}}$.
Then by Proposition \ref{twistderived}, we have the following
\begin{proposition}
The subalgebra $\mathcal {D}\mathcal {H}_\Lambda^{\langle0,1]}(\widetilde{\A})$ is generated by the elements in the set $\{u_{M[-1]},u_{N},u_{P[1]}~|~M,N\in\widetilde{\A},P\in\P_{\widetilde{\A}}\}$, and the following relations
\begin{flalign}
&u_{M[i]}\star u_{N[i]}=q^{\frac{1}{2}\Lambda({\bf m}^\ast,{\bf n}^\ast)+\lr{{\bf m},{\bf n}}}\sum_{[L]}\frac{|\mathrm{Ext}_{\widetilde{\A}}^{1}(M,N)_{L}|}{|\mathrm{Hom}_{\widetilde{\A}}(M,N)|}u_{L[i]},~~i=-1,0;\\\label{Hallcheng}
&u_{P[1]}\star u_{Q[1]}=q^{\frac{1}{2}\Lambda({\bf p}^\ast,{\bf q}^\ast)}
u_{(P\oplus Q)[1]}
=q^{\Lambda({\bf p}^\ast,{\bf q}^\ast)}u_{Q[1]}\star u_{P[1]};\\
&u_{M[-1]}\star u_{N}=q^{-\frac{1}{2}\Lambda({\bf m}^\ast,{\bf n}^\ast)}u_{N\oplus
M[-1]};\\
&u_{N}\star u_{P[1]}=q^{-\frac{1}{2}\Lambda({\bf n}^\ast,{\bf p}^\ast)}u_{N\oplus
P[1]};\\
&u_{N}\star u_{M[-1]}=q^{-\frac{1}{2}\Lambda({\bf n}^\ast,{\bf m}^\ast)-\lr{{\bf n},{\bf m}}}
\sum\limits_{[X],[Y]}|{}_{X}\Hom_{\widetilde{\A}}(N,M)_{Y}|u_{X\oplus Y[-1]};\\
&u_{P[1]}\star u_{N}=q^{-\frac{1}{2}\Lambda({\bf p}^\ast,{\bf n}^\ast)-\lr{{\bf p},{\bf n}}}
\sum\limits_{[Q],[F]}|{}_{Q}\Hom_{\widetilde{\A}}(P,N)_F|u_{F\oplus Q[1]};\end{flalign}\begin{flalign}
u_{M[-1]}\star u_{P[1]}=q^{\Lambda({\bf m}^\ast,{\bf p}^\ast)-\lr{{\bf p},{\bf m}}}u_{P[1]}\star u_{M[-1]}=q^{\frac{1}{2}\Lambda({\bf m}^\ast,{\bf p}^\ast)}u_{M[-1]\oplus P[1]};\end{flalign}
for any $M,N\in\widetilde{\A}$ and $P,Q\in\P_{\widetilde{\A}}$.
\end{proposition}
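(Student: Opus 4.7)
The plan is to follow the same template used to derive Proposition \ref{elsubalgebra} from Proposition \ref{twistderived}, namely to take each relation in the untwisted algebra $\mathcal{DH}_q(\widetilde{\A})$ listed in Proposition \ref{twistderived}, restrict attention to the arguments that live in degrees $-1$, $0$, or $1$ (with the degree-$1$ part constrained to lie in $\P_{\widetilde{\A}}$ as required by the definition of $\mathcal{DH}_q^{\langle 0,1]}(\widetilde{\A})$), and then multiply through by the appropriate power of $v$ coming from the $\Lambda$-twist $v^{\Lambda(({\bf n}-{\bf m}-{\bf p})^\ast,(\bf{y}-{\bf x}-\bf{q})^\ast)}$.

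First I would verify that $\{u_{M[-1]},u_N,u_{P[1]}\mid M,N\in\widetilde{\A},P\in\P_{\widetilde{\A}}\}$ generates the subalgebra. This is immediate from the unmixed-degree product formulas (2.4) of Proposition \ref{twistderived}, which give $u_{M[-1]}\ast u_N=u_{M[-1]\oplus N}$ and $u_N\ast u_{P[1]}=u_{N\oplus P[1]}$; composing these yields $u_{M[-1]\oplus N\oplus P[1]}$ up to a known scalar, so every basis vector is recovered from the proposed generators. Note that the reason for restricting the $[1]$-part to $\P_{\widetilde{\A}}$ is exactly that $\Ext^1_{D^b(\widetilde{\A})}(P[1],N)\cong\Ext^1_{\widetilde{\A}}(P,N)=0$, which allows $u_N\ast u_{P[1]}$ to be a single basis element rather than a nontrivial sum.

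Next I would check each displayed relation individually. The same-degree products (degree $0$ or $-1$) come directly from relation (2.2) of Proposition \ref{twistderived}, multiplied by the $\Lambda$-factor $v^{\Lambda({\bf m}^\ast,{\bf n}^\ast)}$; the purely $[1]$-relation comes from (2.3) with a factor $v^{\Lambda({\bf p}^\ast,{\bf q}^\ast)}$, and both orderings in this case give the same underlying basis element $u_{(P\oplus Q)[1]}$, yielding the commutation formula by comparing the two twists. The mixed relations $u_{M[-1]}\star u_N$ and $u_N\star u_{P[1]}$ come from (2.3); the relations $u_N\star u_{M[-1]}$ and $u_{P[1]}\star u_N$ come from (2.4) (applied with $(i,i+1)=(-1,0)$ and $(0,1)$ respectively). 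The final relation $u_{M[-1]}\star u_{P[1]}=q^{\Lambda(\ldots)-\lr{\bf p,\bf m}}u_{P[1]}\star u_{M[-1]}$ is obtained from (2.5), since $|i-j|=2>1$, using $\lr{P[1],M[-1]}=-\lr{P,M}=-\lr{\bf p,\bf m}$ and the fact that $u_{M[-1]}\ast u_{P[1]}=u_{M[-1]\oplus P[1]}$ (again because $\Ext^1_{D^b}(M[-1],P[1])=\Ext^2_{\widetilde{\A}}(M,P)=0$ as $\widetilde{\A}$ is hereditary).

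The main bookkeeping burden is tracking the $\Lambda$-factors, particularly for the asymmetric formulas in lines involving $u_N\star u_{M[-1]}$ and $u_{P[1]}\star u_N$, where one must combine Lemma \ref{sjishu} with the identity $\lr{\bf n,\bf m}=\lr{\bf n^\ast,\ldots}$-type rewritings used in the proof of Lemma \ref{yinl1}; the hard part is not any single calculation, but confirming the coherence of signs and of the $^\ast$ versus $(\cdot)^\ast$ conventions across all eight relations. Once the same pattern of twisting verified for Proposition \ref{elsubalgebra} is applied uniformly, each formula drops out, and completeness of the relation list follows because the underlying untwisted algebra $\mathcal{DH}_q^{\langle 0,1]}(\widetilde{\A})$ is already known to be defined by exactly the same relations (restricted to the appropriate generators) via Proposition \ref{twistderived}.
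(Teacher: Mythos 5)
Your proposal is correct and coincides with the paper's own (essentially one-line) argument, which simply invokes Proposition~\ref{twistderived} and the $\Lambda$-twist to read off each relation. One small slip: $\Ext^1_{D^b(\widetilde{\A})}(M[-1],P[1])=\Hom_{D^b(\widetilde{\A})}(M,P[3])=\Ext^3_{\widetilde{\A}}(M,P)$, not $\Ext^2_{\widetilde{\A}}(M,P)$, but the conclusion (vanishing for a hereditary category) is unaffected.
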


Let $\mathfrak{I}_2$ be the two-sided ideal of the derived Hall subalgebra $\mathcal {D}\mathcal {H}_\Lambda^{\langle0,1]}(\widetilde{\A})$ generated by the elements in the set $$\mathcal {S}_2:=\{u_{\uptau^{-1}M}-u_{M[-1]}~|~M\in\widetilde{\A}\},$$ where $\uptau$ is the Auslander-Reiten translation in $D^b(\widetilde{\A})$.
Let us define \begin{equation}\mathcal {D}\mathcal {H}_\Lambda^{{cl}_2}(\widetilde{\A}):=\mathcal {D}\mathcal {H}_\Lambda^{\langle0,1]}(\widetilde{\A})/\mathfrak{I}_2.\end{equation}
\begin{remark}\label{gxyz}
For $I\in\I_{\widetilde{\A}}$, we have that $\uptau^{-1}(I)=P[1]$ in $D^b(\widetilde{\A})$, where $P=\nu^{-1}(I)$. Thus, $u_{\uptau^{-1}I}-u_{I[-1]}=u_{P[1]}-u_{I[-1]}$, which coincides with the relation in $\mathcal{S}_1$.
\end{remark}

In what follows, we denote by $\tau$ the Auslander-Reiten translation in $\widetilde{\A}$. Then for any $M\in\widetilde{\A}$, if $M$ has no nonzero projective (resp. injective) direct summands, then we have that $\uptau M=\tau M$ (resp. $\uptau^{-1} M=\tau^{-1} M$).

\begin{theorem}\label{twoalg}
The natural embedding of algebras $\xymatrix{\lambda:\mathcal {D}\mathcal {H}_\Lambda^{ec}(\widetilde{\A})\ar@{^{(}->}[r]&\mathcal {D}\mathcal {H}_\Lambda^{\langle0,1]}(\widetilde{\A})}$ induces a surjective homomorphism of algebras $\theta:\mathcal {D}\mathcal {H}_\Lambda^{{cl}_1}(\widetilde{\A})\longrightarrow \mathcal {D}\mathcal {H}_\Lambda^{{cl}_2}(\widetilde{\A})$.
\end{theorem}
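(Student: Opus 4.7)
The plan is to prove the statement in three steps: first check that $\lambda$ is actually an algebra embedding, then verify that $\lambda(\mathfrak{I}_1)\subseteq\mathfrak{I}_2$ so that a well-defined map $\theta$ on the quotients exists, and finally establish surjectivity by tracing the generators.

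For the first two steps, note that the underlying module $\mathcal{D}\mathcal{H}_\Lambda^{ec}(\widetilde{\A})$ sits inside $\mathcal{D}\mathcal{H}_\Lambda^{\langle0,1]}(\widetilde{\A})$ as the span of the $u_{X_\bullet}$ with $X_\bullet = I[-1]\oplus M\oplus P[1]$, which is a special case of $M'[-1]\oplus N\oplus P[1]$ (take $M'=I$). The $\Lambda$-twisting factor in both algebras on a pair of basis elements depends only on the classes $\widehat{X_\bullet}^\ast$ and $\widehat{Y_\bullet}^\ast$ of the two complexes in $K(D^b(\widetilde{\A}))$, namely $v^{\Lambda(\widehat{X_\bullet}^\ast,\widehat{Y_\bullet}^\ast)}$ times the untwisted Hall product, so the two twists agree on the common basis; thus $\lambda$ is indeed an embedding of algebras. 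For well-definedness, it is enough to send the generators of $\mathfrak{I}_1$ into $\mathfrak{I}_2$. By Remark~\ref{gxyz}, for every $I\in\I_{\widetilde{\A}}$ one has $\uptau^{-1}(I)=P[1]$ in $D^b(\widetilde{\A})$ with $P=\nu^{-1}(I)$, so the defining element $u_{\nu^{-1}(I)[1]}-u_{I[-1]}\in\mathcal{S}_1$ equals $u_{\uptau^{-1}I}-u_{I[-1]}\in\mathcal{S}_2$, which lies in $\mathfrak{I}_2$. Hence $\lambda$ descends to a well-defined algebra homomorphism $\theta:\mathcal{D}\mathcal{H}_\Lambda^{cl_1}(\widetilde{\A})\to\mathcal{D}\mathcal{H}_\Lambda^{cl_2}(\widetilde{\A})$.

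For surjectivity, the codomain is generated by the classes of $u_N$, $u_{P[1]}$ and $u_{M[-1]}$, with $N,M\in\widetilde{\A}$ and $P\in\P_{\widetilde{\A}}$. The first two types already live in $\mathcal{D}\mathcal{H}_\Lambda^{ec}(\widetilde{\A})$, hence lie trivially in the image of $\theta$. The key point is the third: given $M\in\widetilde{\A}$, decompose $M=M'\oplus I$ with $M'$ having no nonzero injective summand and $I\in\I_{\widetilde{\A}}$. Using the paragraph preceding Theorem~\ref{twoalg}, one has $\uptau^{-1}M'=\tau^{-1}M'\in\widetilde{\A}$, while $\uptau^{-1}I=\nu^{-1}(I)[1]=P[1]$ with $P=\nu^{-1}(I)\in\P_{\widetilde{\A}}$. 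Therefore
\begin{equation*}
\uptau^{-1}M=\tau^{-1}(M')\oplus P[1],
\end{equation*}
which is an object of the subcategory $C_{\widetilde{\A}}^{1}$ (indeed of $C_{\widetilde{\A}}^{e}$). Hence $u_{\tau^{-1}(M')\oplus P[1]}$ is an element of $\mathcal{D}\mathcal{H}_\Lambda^{ec}(\widetilde{\A})$, and modulo $\mathfrak{I}_2$ we have
\begin{equation*}
u_{M[-1]}\equiv u_{\uptau^{-1}M}=u_{\tau^{-1}(M')\oplus P[1]},
\end{equation*}
so the class of $u_{M[-1]}$ in $\mathcal{D}\mathcal{H}_\Lambda^{cl_2}(\widetilde{\A})$ lies in the image of $\theta$. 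This exhausts the generators and proves surjectivity.

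The proof is essentially bookkeeping, and the only subtle point to keep in mind is that the decomposition $M=M'\oplus I$ is needed precisely because $\uptau^{-1}$ restricted to modules without injective summands remains in $\widetilde{\A}$, while on the injective part it produces shifted projectives; these are exactly the two kinds of summands allowed in the basis of $\mathcal{D}\mathcal{H}_\Lambda^{ec}(\widetilde{\A})$. So the ideal $\mathfrak{I}_2$ is designed to reach back, via Auslander--Reiten translation, everything of the form $u_{M[-1]}$ from the smaller algebra, which is exactly what makes $\theta$ surjective.
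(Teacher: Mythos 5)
Your proof is correct and follows the same core strategy as the paper: use Remark~\ref{gxyz} to see $\mathcal{S}_1\subseteq\mathcal{S}_2$ so that $\theta$ is well-defined, then decompose $M=M'\oplus I$ with $M'$ injective-free and apply $\uptau^{-1}$ to land back inside $\mathcal{D}\mathcal{H}_\Lambda^{ec}(\widetilde{\A})$. Where you diverge from the paper is in the surjectivity step: the paper works at the level of arbitrary basis elements $u_{M[-1]\oplus N\oplus P[1]}$, factoring each as $u_{\bar{M}[-1]}\ast u_{I[-1]\oplus N\oplus P[1]}$ and carrying around explicit powers of $v^{\Lambda(\cdot,\cdot)}$ so that both factors are individually recognized as images; you instead argue at the level of the \emph{generating set} $\{u_{M[-1]},u_N,u_{P[1]}\}$ of $\mathcal{D}\mathcal{H}_\Lambda^{\langle 0,1]}(\widetilde{\A})$ furnished by the Proposition at the start of Section 6, which is enough for surjectivity of a ring homomorphism onto the quotient. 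Your route is cleaner in that it bypasses the scalar bookkeeping entirely; the paper's route has the side benefit of exhibiting an explicit preimage for each basis element, which is occasionally useful later. Both arguments hinge on exactly the same observation, namely that $\uptau^{-1}M=\tau^{-1}M'\oplus\nu^{-1}(I)[1]$ lies in $C^1_{\widetilde{\A}}\subseteq C^e_{\widetilde{\A}}$.
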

\begin{proof}
By Remark \ref{gxyz}, $\mathcal{S}_1\subseteq\mathcal{S}_2$, so $\lambda$ induces a homomorphism of algebras
$$\theta:\mathcal {D}\mathcal {H}_\Lambda^{{cl}_1}(\widetilde{\A})\longrightarrow \mathcal {D}\mathcal {H}_\Lambda^{{cl}_2}(\widetilde{\A}).$$

For any $u_{M[-1]\oplus N\oplus P[1]}\in\mathcal {D}\mathcal {H}_\Lambda^{{cl}_2}(\widetilde{\A})$, write $M=\bar{M}\oplus I$ such that $\bar{M}$ has no injective direct summands, then it is straightforward to check that $u_{\bar{M}[-1]}\ast u_{I[-1]\oplus N\oplus P[1]}=u_{M[-1]\oplus N\oplus P[1]}$ in $\mathcal {D}\mathcal {H}_q^{\langle0,1]}(\widetilde{\A})$.
Thus, we have that in $\mathcal {D}\mathcal {H}_\Lambda^{{cl}_2}(\widetilde{\A})$
\begin{flalign*}u_{\tau^{-1}\bar{M}}\star u_{I[-1]\oplus N\oplus P[1]}&=u_{\bar{M}[-1]}\star u_{I[-1]\oplus N\oplus P[1]}\\&=v^{-\Lambda({\bar{{\bf m}}}^\ast,({\bf n}-{\bf i}-{\bf p})^\ast)}u_{M[-1]\oplus N\oplus P[1]}.\end{flalign*}
Then $\theta(v^{\Lambda({\bar{{\bf m}}}^\ast,({\bf n}-{\bf i}-{\bf p})^\ast)}u_{\tau^{-1}\bar{M}}\star u_{I[-1]\oplus N\oplus P[1]})=u_{M[-1]\oplus N\oplus P[1]}$, i.e., $\theta$ is surjective.
\end{proof}

\subsection{Multiplication formulas in $\mathcal {D}\mathcal {H}_\Lambda^{{cl}_2}(\widetilde{\A})$}\label{jihao}
Given $M,N\in\widetilde{\A}$, where $M=M'\oplus P'$ and $P'$ is the maximal projective direct summand of $M$. For each morphism $\theta:N\longrightarrow\tau M'$, we have an exact sequence
\begin{equation}\label{zhxl}
\xymatrix{0\ar[r]&D\ar[r]&N\ar[r]^-\theta&\tau M'\ar[r]&\tau A'\oplus I\ar[r]&0}
\end{equation}where $D=\Ker \theta, \tau A'\oplus I=\Coker \theta$, $I\in\I_{\widetilde{\A}}$, and $A'$ has no nonzero projective direct summands. Note that by the Auslander-Reiten formula we have that $\Ext^1_{\widetilde{\A}}(M,N)\cong\Hom_{\widetilde{\A}}(N,\tau M')$.
For the simplicity of notation, we set $[M,N]^1:=\dim_k\Ext^1_{\widetilde{\A}}(M,N)$ for any $M,N\in\widetilde{\A}$.

Firstly, let us give the following lemma, which is significant in the later calculations.
\begin{lemma}
For any $M,N\in\widetilde{\A}$, we have that $\lr{{\bf m},\uptau{\bf n}}=-\lr{{\bf n},{\bf m}}$ and ${^*\uptau{\bf n}}=-{\bf n}^*$.
\end{lemma}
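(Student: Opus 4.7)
The plan is to reduce both identities to the single fact that, at the level of the Grothendieck group $K(\widetilde{\A})\cong\ZZ^m$, the Auslander--Reiten translation $\uptau$ of $D^b(\widetilde{\A})$ is represented by the Coxeter transformation $c:=-E(\widetilde{Q})^{-1}E'(\widetilde{Q})$.

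First I would compute the dimension vectors of the indecomposable projectives and injectives. Using the matrix $D_mE(\widetilde{Q})$ of the Euler form on the basis of simples (recorded just before Lemma~\ref{sjishu}), the elementary identities $\lr{\hat{P_i},\hat{S_j}}=\delta_{ij}d_i$ and $\lr{\hat{S_i},\hat{I_j}}=\delta_{ij}d_j$, together with the valued-quiver compatibility $D_mE'(\widetilde{Q})=E(\widetilde{Q})^TD_m$ (which encodes $r'_{ij}d_i=r_{ji}d_j$), yield
\[\hat{P_i}=E'(\widetilde{Q})^{-1}e_i,\qquad \hat{I_i}=E(\widetilde{Q})^{-1}e_i.\]
The excerpt already uses $\uptau^{-1}(I)=\nu^{-1}(I)[1]$ in $D^b(\widetilde{\A})$, equivalently $\uptau P_i=I_i[-1]$; since the shift acts by $-1$ on $K(\widetilde{\A})$, this reads $\uptau\hat{P_i}=-\hat{I_i}$. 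Because $\{\hat{P_i}\}$ is a $\ZZ$-basis of $K(\widetilde{\A})$, this pins the action of $\uptau$ down uniquely as $c$.

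With this identification, the second equality is immediate:
\[{}^{\ast}\uptau{\bf n}=E(\widetilde{Q})\uptau{\bf n}=-E(\widetilde{Q})E(\widetilde{Q})^{-1}E'(\widetilde{Q}){\bf n}=-E'(\widetilde{Q}){\bf n}=-{\bf n}^{\ast}.\]
For the first, express the Euler pairing as $\lr{x,y}=x^TD_mE(\widetilde{Q})y$ and compute
\[\lr{{\bf m},\uptau{\bf n}}={\bf m}^TD_mE(\widetilde{Q})\,c\,{\bf n}=-{\bf m}^TD_mE'(\widetilde{Q}){\bf n}=-{\bf m}^TE(\widetilde{Q})^TD_m{\bf n},\]
the last step using the compatibility $D_mE'(\widetilde{Q})=E(\widetilde{Q})^TD_m$; transposing this scalar then gives $-{\bf n}^TD_mE(\widetilde{Q}){\bf m}=-\lr{{\bf n},{\bf m}}$.

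There is no genuine obstacle here: the argument is a pure Grothendieck-group and matrix manipulation. The only point demanding care is to keep the valued-quiver asymmetry between $E(\widetilde{Q})$ and $E'(\widetilde{Q})$ (equivalently between ${}^{\ast}{\bf x}$ and ${\bf x}^{\ast}$) straight throughout, and this enters precisely through the single identity $D_mE'(\widetilde{Q})=E(\widetilde{Q})^TD_m$.
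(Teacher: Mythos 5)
Your proof is correct and takes a genuinely different route from the paper's. The paper proves only $\lr{{\bf m},\uptau{\bf n}}=-\lr{{\bf n},{\bf m}}$, observes that ${}^*\uptau{\bf n}=-{\bf n}^*$ follows from it via the Euler-form matrix, and establishes the first identity by a case split: for $N$ with no nonzero projective direct summand it invokes the Auslander--Reiten formulas (citing Rupel), and for $N$ projective it uses $\uptau P=\nu(P)[-1]$ and computes directly (the general case then follows by additivity on $K(\widetilde{\A})$). You instead pin down the action of $\uptau$ on $K(\widetilde{\A})\cong\ZZ^m$ as the Coxeter matrix $c=-E(\widetilde{Q})^{-1}E'(\widetilde{Q})$, using $\hat{P_i}=E'(\widetilde{Q})^{-1}e_i$, $\hat{I_i}=E(\widetilde{Q})^{-1}e_i$ and $\uptau\hat{P_i}=-\hat{I_i}$, after which both identities become short matrix computations from the single valued-quiver symmetry $D_mE'(\widetilde{Q})=E(\widetilde{Q})^{T}D_m$. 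Your argument is uniform (no case split), self-contained (no appeal to an external AR formula), and makes explicit that the lemma is purely a Grothendieck-group statement. One small bonus of your more careful setup: the paper records the Euler-form matrix as $E(\widetilde{Q})D_m=D_mE(\widetilde{Q})$, which is not quite right in the non-simply-laced setting (these two products differ when $d_i\neq d_j$ with $r'_{ij}\neq0$); the Euler-form matrix is $D_mE(\widetilde{Q})$, and the relevant symmetry is $D_mE(\widetilde{Q})=E'(\widetilde{Q})^{T}D_m$, which is precisely the identity your computation is built on.
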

\begin{proof}
For the convenience of readers, we give a simple proof.
Recall that the matrix representing the Euler form under the standard basis is $E(\widetilde{Q})D_m=D_mE(\widetilde{Q})$. Then the second equation follows from the first one.
So we only prove the first one.

If $N$ has no nonzero projective direct summands, it follows from the Auslander-Reiten formulas in $\widetilde{\A}$ (see for example, \cite[Proposition 2.4]{Rupel2}).

If $N=P$ is projective, $\uptau P=I[-1]$, where $I=\nu(P)$. Then we have that $\lr{{\bf m},\uptau{\bf n}}=\lr{{\bf m},-{\bf i}}=-\lr{{\bf p},{\bf m}}.$
\end{proof}
\begin{theorem}\label{ddl}
Let $M,N\in\widetilde{\A}$ such that $\Ext_{\widetilde{\A}}^1(N,M)=0$. Then we have the following equations in $\mathcal {D}\mathcal {H}_\Lambda^{{cl}_2}(\widetilde{\A}):$
\begin{equation}\label{halltimes}
\begin{split}
&(q^{[M,N]^1}-1)u_M\star u_N=q^{\frac{1}{2}\Lambda({\bf m}^*,{\bf n}^*)}\sum\limits_{E\ncong M\oplus N}
|\Ext_{\widetilde{\A}}^1(M,N)_E|u_E+\\&\sum\limits_{\begin{smallmatrix}D,A,I\\D\ncong N\end{smallmatrix}}q^{\frac{1}{2}\Lambda(({\bf m}-{{\bf a}})^*,{({\bf n}+{\bf a})}^*)+\frac{1}{2}\lr{{{\bf m}}-{{\bf a}},{\bf n}}}|_D\Hom_{\widetilde{\A}}(N,\tau M)_{\tau A\oplus I}|u_{A}\star u_{D\oplus I[-1]}
\end{split}\end{equation}
and
\begin{equation}
\begin{split}
&(q^{[M,N]^1}-1)u_{M\oplus N}=\sum\limits_{E\ncong M\oplus N}
|\Ext_{\widetilde{\A}}^1(M,N)_E|u_E+\\&\sum\limits_{\begin{smallmatrix}D,A,I\\D\ncong N\end{smallmatrix}}q^{\frac{1}{2}\Lambda(({\bf m}+{\bf n}-{{\bf a}})^*,{{\bf a}}^*)+\frac{1}{2}\lr{{{\bf m}}-{{\bf a}},{\bf n}}+[M,N]^1}|_D\Hom_{\widetilde{\A}}(N,\tau M)_{\tau A\oplus I}|u_{A}\star u_{D\oplus I[-1]},
\end{split}
\end{equation}
where each $A$ has the same maximal projective direct summand as $M$ in both equations.
\end{theorem}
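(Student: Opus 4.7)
The plan is to exploit the defining relation $u_{\uptau^{-1}X}=u_{X[-1]}$ of the quotient $\mathcal{D}\mathcal{H}_\Lambda^{{cl}_2}(\widetilde{\A})$, which specialises to $u_{M'}=u_{(\tau M')[-1]}$ since $M'$ has no projective direct summands. Inside the quotient the identity $u_N\star u_{M'}=u_N\star u_{(\tau M')[-1]}$ holds; expanding the two sides via distinct Hall multiplication relations and matching them yields the theorem after reassembling the projective summand $P'$.

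The left side evaluates directly via the Hall formula (\ref{lgx3}): since $\Ext^1(N,M)=0$ forces $\Ext^1(N,M')=0$, only the split extension contributes, and using $|\Hom(N,M')|=q^{\lr{{\bf n},{\bf m}'}}$ one obtains $u_N\star u_{M'}=q^{\frac{1}{2}\Lambda({\bf n}^\ast,{\bf m}'^\ast)}u_{N\oplus M'}$. For the right side I would apply the relation for $u_N\star u_{X[-1]}$ from the proposition characterising $\mathcal{D}\mathcal{H}_\Lambda^{\langle 0,1]}(\widetilde{\A})$ with $X=\tau M'$, producing a sum over morphisms $\theta\colon N\to\tau M'$ indexed by kernel $D$ and cokernel $Z$. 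By the Auslander--Reiten duality $\Ext^1(M,N)\cong D\Hom(N,\tau M')$ the non-zero morphisms are in bijection with the non-split extensions of $M$ by $N$; the 4-term exact sequence (\ref{zhxl}) writes each cokernel as $Z=\tau A'\oplus I$ with $A'$ projective-free and $I$ injective, and determines the middle term $E\cong L\oplus P'$ of the corresponding extension from the data $(D,A',I)$ (the projectivity of $P'$ forcing it as a direct summand of $E$).

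Passing to the quotient I would simplify each non-zero morphism term $u_{D\oplus(\tau A')[-1]\oplus I[-1]}$ using $u_{(\tau A')[-1]}=u_{A'}$ to obtain an explicit twist of $u_D\star u_{A'}\star u_{I[-1]}$, and then absorb $u_{A'}$ with the projective factor $u_{P'}$ via $\Ext^1(A',P')=0$ to recast it as $u_A\star u_{D\oplus I[-1]}$ with $A=A'\oplus P'$. The zero-morphism contribution $u_{N\oplus(\tau M')[-1]}$ simplifies in the quotient to a twist multiple of $u_{M'}\star u_N$, which by (\ref{lgx3}) expands as a sum over extensions of $M$ by $N$ indexed by their middle terms. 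The split piece $E=M\oplus N$ of this expansion combines with the left side and, together with $|\Ext^1(M,N)_{M\oplus N}|=1$, the coefficient comparison introduces the factor $q^{[M,N]^1}-1$ in front of $u_{N\oplus M'}$. Multiplying through by $u_{P'}$ via projectivity promotes $u_{N\oplus M'}\mapsto u_{M\oplus N}$ (and $u_{E/P'}\mapsto u_E$) to produce the second equation; the first equation then follows by substituting the Hall expansion $u_M\star u_N=q^{\frac{1}{2}\Lambda({\bf m}^\ast,{\bf n}^\ast)-[M,N]^1}\sum_L|\Ext^1(M,N)_L|u_L$ and isolating the residual $L=M\oplus N$ summand.

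The main obstacle will be the careful bookkeeping of the quantum twists $q^{\frac{1}{2}\Lambda(\cdot,\cdot)}$ and Euler-form factors $q^{\lr{\cdot,\cdot}}$ across the three algebra levels, together with verifying that the AR bijection pairs the kernel-cokernel count $|{}_D\Hom(N,\tau M')_{\tau A'\oplus I}|$ correctly with the extension count $|\Ext^1(M,N)_E|$ for the $E$ reconstructed from $(D,A',I)$. The identity $\lr{{\bf m},\uptau{\bf n}}=-\lr{{\bf n},{\bf m}}$ and its consequence ${}^\ast\uptau{\bf n}=-{\bf n}^\ast$ established immediately before the theorem will be indispensable for reconciling the various exponents.
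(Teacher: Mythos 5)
Your proposal takes essentially the same route as the paper's proof: it hinges on the quotient relation $u_{(\tau M')[-1]}=u_{M'}$, computes the product with $N$ two ways via the Hall relations (\ref{lgx3}) and the $u_N\star u_{X[-1]}$ relation, re-expresses the zero-morphism term $u_{N\oplus(\tau M')[-1]}$ back through $u_{M'}\star u_N$, converts $u_{(\tau A'\oplus I)[-1]}$ to $u_{A'\oplus P[1]}$, and reassembles the maximal projective summand $P'$ by a left multiplication at the end. One small remark: the concern you flag about explicitly pairing each cokernel class $(D,A',I)$ with a single middle term $E$ via the Auslander--Reiten isomorphism is unnecessary --- the two sums over $E$ and over $(D,A,I)$ are kept as separate summands in the conclusion, so no term-by-term matching is needed, only the aggregate algebraic identity.
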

\begin{proof}
We assume that $\Ext_{\widetilde{\A}}^1(M,N)\neq0$, otherwise, the two equations are trivial.

Write $M=M'\oplus P'$ such that $P'$ is the maximal projective direct summand of $M$.
Set $\tilde{M}=\tau M'$. On the one hand,
\begin{equation}\label{nm'3}
\begin{split}
&u_{\tilde{M}[-1]}\star u_N=u_{\tau^{-1}\tilde{M}}\star u_N=u_{M'}\star u_N\\
&=q^{\frac{1}{2}\Lambda({{\bf m}'}^*,{\bf n}^*)+\lr{{\bf m}',{\bf n}}}\sum\limits_{L\ncong M'\oplus N}\frac{|\Ext_{\widetilde{\A}}^1(M',N)_L|}{|\Hom_{\widetilde{\A}}(M',N)|}u_L+q^{\frac{1}{2}\Lambda({{\bf m}'}^*,{\bf n}^*)-[M',N]^1}u_{M'\oplus N}
\end{split}\end{equation}
and \begin{equation}\label{nm'4}
u_{\tilde{M}[-1]}\star u_N=q^{-\frac{1}{2}\Lambda({\tilde{{\bf m}}}^*,{\bf n}^*)}u_{N\oplus \tilde{M}[-1]}.\end{equation}
Thus, combining (\ref{nm'3}) with (\ref{nm'4}), we obtain that
\begin{equation}\label{fangchen1}
\begin{split}
&q^{-\frac{1}{2}\Lambda({\tilde{{\bf m}}}^*,{\bf n}^*)}u_{N\oplus \tilde{M}[-1]}-q^{\frac{1}{2}\Lambda({{\bf m}'}^*,{\bf n}^*)-[M',N]^1}u_{M'\oplus N}\\&=q^{\frac{1}{2}\Lambda({{\bf m}'}^*,{\bf n}^*)-[M',N]^1}\sum\limits_{L\ncong M'\oplus N}{|\Ext_{\widetilde{\A}}^1(M',N)_L|}u_L.
\end{split}\end{equation}

On the other hand, since $\Ext_{\widetilde{\A}}^1(N,M)=0$ implies $\Ext_{\widetilde{\A}}^1(N,M')=0$, we have that
\begin{flalign}\label{nm'1}
u_N\star u_{M'}=q^{\frac{1}{2}\Lambda({\bf n}^*,{{\bf m}'}^*)}u_{M'\oplus N}.
\end{flalign}
Meanwhile, we also have that
\begin{equation}\label{nm'2}
\begin{split}
&u_N\star u_{M'}=u_N\star u_{\tau^{-1}\tilde{M}}=u_N\star u_{\tilde{M}[-1]}\\&=q^{-\frac{1}{2}\Lambda({\bf n}^*,{\tilde{{\bf m}}}^*)-\lr{{\bf n},\tilde{{\bf m}}}}(u_{N\oplus \tilde{M}[-1]}+\sum\limits_{\begin{smallmatrix}D,A',I\\D\ncong N\end{smallmatrix}}|_D\Hom_{\widetilde{\A}}(N,\tau M')_{\tau A'\oplus I}|u_{D\oplus (\tau A'\oplus I)[-1]})\\
&=q^{t_0}(u_{N\oplus \tilde{M}[-1]}+\sum\limits_{\begin{smallmatrix}D,A',I\\D\ncong N\end{smallmatrix}}|_D\Hom_{\widetilde{\A}}(N,\tau M')_{\tau A'\oplus I}|q^{\frac{1}{2}\Lambda((\tau {\bf a}'+{\bf i})^*,{\bf d}^*)}u_{(\tau A'\oplus I)[-1]}\star u_D)\\
&=q^{t_0}(u_{N\oplus \tilde{M}[-1]}+\sum\limits_{\begin{smallmatrix}D,A',I\\D\ncong N\end{smallmatrix}}|_D\Hom_{\widetilde{\A}}(N,\tau M')_{\tau A'\oplus I}|q^{\frac{1}{2}\Lambda((\tau {\bf a}'+{\bf i})^*,{\bf d}^*)}u_{A'\oplus P[1]}\star u_D)
\end{split}\end{equation}
where $t_0=-\frac{1}{2}\Lambda({\bf n}^*,{\tilde{{\bf m}}}^*)-\lr{{\bf n},\tilde{{\bf m}}}$ and $P=\nu^{-1}(I)$ for each $I$ in the last sum in (\ref{nm'2}).
Thus, combining (\ref{nm'1}) with (\ref{nm'2}), we obtain that
\begin{equation}\label{fangchen2}
\begin{split}
&q^{\frac{1}{2}\Lambda({\bf n}^*,{{\bf m}'}^*)+\frac{1}{2}\Lambda({\bf n}^*,{\tilde{{\bf m}}}^*)+\lr{{\bf n},\tilde{{\bf m}}}}u_{M'\oplus N}-u_{N\oplus \tilde{M}[-1]}\\&=\sum\limits_{\begin{smallmatrix}D,A',I\\D\ncong N\end{smallmatrix}}|_D\Hom_{\widetilde{\A}}(N,\tau M')_{\tau A'\oplus I}|q^{\frac{1}{2}\Lambda((\tau {\bf a}'+{\bf i})^*,{\bf d}^*)}u_{A'\oplus P[1]}\star u_D.
\end{split}\end{equation}

Noting that \begin{flalign*}&\Lambda({\bf n}^*,{\tilde{{\bf m}}}^*)=\Lambda(^*{\bf n},{^*{\tilde{{\bf m}}}})=\Lambda(^*{\bf n},{^*(\tau{\bf m}')})=\Lambda(^*{\bf n},-{{\bf m}'}^*)=\Lambda({{\bf m}'}^*,{^*{\bf n}})\\&=\Lambda({{\bf m}'}^*,{{\bf n}^*})-\Lambda(E'(\widetilde{Q}){\bf m}',B(\widetilde{Q}){\bf n})=\Lambda({{\bf m}'}^*,{\bf n}^*)+\lr{{\bf m}',{\bf n}},\end{flalign*}
$\lr{{\bf n},{\tilde{{\bf m}}}}=\lr{{\bf n},\tau{\bf m}'}=-\lr{{\bf m}',{\bf n}}$~and~$[M',N]^1=[M,N]^1$, we rewrite (\ref{fangchen1}) and (\ref{fangchen2}) as
\begin{equation}\label{fangchen3}
\begin{split}q^{\frac{1}{2}\lr{{\bf m}',{\bf n}}+[M,N]^1}u_{N\oplus \tilde{M}[-1]}-u_{M'\oplus N}
=\sum\limits_{L\ncong M'\oplus N}{|\Ext_{\widetilde{\A}}^1(M',N)_L|}u_L
\end{split}\end{equation}
and
\begin{equation}\label{fangchen4}
\begin{split}&q^{-\frac{1}{2}\lr{{\bf m}',{\bf n}}}u_{M'\oplus N}-u_{N\oplus \tilde{M}[-1]}\\&
=\sum\limits_{\begin{smallmatrix}D,A',I\\D\ncong N\end{smallmatrix}}|_D\Hom_{\widetilde{\A}}(N,\tau M')_{\tau A'\oplus I}|q^{\frac{1}{2}\Lambda((\tau {\bf a}'+{\bf i})^*,{\bf d}^*)}u_{A'\oplus P[1]}\star u_D,
\end{split}\end{equation}
respectively. Then combining (\ref{fangchen3}) with (\ref{fangchen4}), we work out that
\begin{equation}\label{zjjg1}
\begin{split}
(q^{[M,N]^1}-1)&u_{N\oplus \tilde{M}[-1]}=q^{-\frac{1}{2}\lr{{\bf m}',{\bf n}}}\sum\limits_{L\ncong M'\oplus N}{|\Ext_{\widetilde{\A}}^1(M',N)_L|}u_L+\\&\sum\limits_{\begin{smallmatrix}D,A',I\\D\ncong N\end{smallmatrix}}|_D\Hom_{\widetilde{\A}}(N,\tau M')_{\tau A'\oplus I}|q^{\frac{1}{2}\Lambda((\tau {\bf a}'+{\bf i})^*,{\bf d}^*)}u_{A'\oplus P[1]}\star u_D
\end{split}\end{equation}
and
\begin{equation}\label{zjjg2}
\begin{split}
&(q^{[M,N]^1}-1)u_{M'\oplus N}=\sum\limits_{L\ncong M'\oplus N}{|\Ext_{\widetilde{\A}}^1(M',N)_L|}u_L+\\&q^{\frac{1}{2}\lr{{\bf m}',{\bf n}}+[M,N]^1}\sum\limits_{\begin{smallmatrix}D,A',I\\D\ncong N\end{smallmatrix}}|_D\Hom_{\widetilde{\A}}(N,\tau M')_{\tau A'\oplus I}|q^{\frac{1}{2}\Lambda((\tau {\bf a}'+{\bf i})^*,{\bf d}^*)}u_{A'\oplus P[1]}\star u_D.
\end{split}\end{equation}

By (\ref{nm'4}) and (\ref{nm'3}), we have that
\begin{equation}\label{dairu1}
\begin{split}
u_{N\oplus \tilde{M}[-1]}&=q^{\frac{1}{2}\Lambda({\tilde{{\bf m}}}^*,{\bf n}^*)}u_{\tilde{M}[-1]}\star u_N\\
&=q^{-\frac{1}{2}\Lambda({{\bf m}'}^*,{\bf n}^*)-\frac{1}{2}\lr{{\bf m}',{\bf n}}}u_{M'}\star u_{N}.
\end{split}\end{equation}
Substituting (\ref{dairu1}) into (\ref{zjjg1}), we obtain that
\begin{equation}\label{zhds1}
\begin{split}
&(q^{[M,N]^1}-1)u_{M'}\star u_{N}=q^{\frac{1}{2}\Lambda({{\bf m}'}^*,{\bf n}^*)}(\sum\limits_{L\ncong M'\oplus N}{|\Ext_{\widetilde{\A}}^1(M',N)_L|}u_L+\\&q^{\frac{1}{2}\lr{{\bf m}',{\bf n}}}\sum\limits_{\begin{smallmatrix}D,A',I\\D\ncong N\end{smallmatrix}}|_D\Hom_{\widetilde{\A}}(N,\tau M')_{\tau A'\oplus I}|q^{\frac{1}{2}\Lambda((\tau {\bf a}'+{\bf i})^*,{\bf d}^*)}u_{A'\oplus P[1]}\star u_D).
\end{split}\end{equation}
Substituting (\ref{nm'1}) into (\ref{zjjg2}), we obtain that
\begin{equation}\label{zhds2}
\begin{split}
&(q^{[M,N]^1}-1)u_{N}\star u_{M'}=q^{\frac{1}{2}\Lambda({{\bf n}^*},{{\bf m}'}^*)}(\sum\limits_{L\ncong M'\oplus N}{|\Ext_{\widetilde{\A}}^1(M',N)_L|}u_L+\\&q^{\frac{1}{2}\lr{{\bf m}',{\bf n}}+[M,N]^1}\sum\limits_{\begin{smallmatrix}D,A',I\\D\ncong N\end{smallmatrix}}|_D\Hom_{\widetilde{\A}}(N,\tau M')_{\tau A'\oplus I}|q^{\frac{1}{2}\Lambda((\tau {\bf a}'+{\bf i})^*,{\bf d}^*)}u_{A'\oplus P[1]}\star u_D).
\end{split}\end{equation}

By (\ref{Hallcheng}), we have that
$u_{P'}\star u_{M'}=q^{\frac{1}{2}\Lambda({{\bf p}'}^*,{{\bf m}'}^*)}u_M$
and $u_{P'}\star u_{L}=q^{\frac{1}{2}\Lambda({{\bf p}'}^*,{{\bf l}}^*)}u_{P'\oplus L}$. It is also easy to see that
$u_{P'}\star u_{A'\oplus P[1]}=q^{\frac{1}{2}\Lambda({{\bf p}'}^*,({\bf a}'-{\bf p})^*)}u_{P'\oplus A'\oplus P[1]}$.
Left multiplying $u_{P'}$ on both sides of the equation (\ref{zhds1}) and substituting these equations above, we obtain that
\begin{equation}\label{zhds3}
\begin{split}
&(q^{[M,N]^1}-1)q^{\frac{1}{2}\Lambda({{\bf p}'}^*,{{\bf m}'}^*)}u_{M}\star u_{N}\\&=q^{\frac{1}{2}\Lambda({{\bf m}'}^*,{\bf n}^*)}(\sum\limits_{L\ncong M'\oplus N}{|\Ext_{\widetilde{\A}}^1(M',N)_L|}q^{\frac{1}{2}\Lambda({{\bf p}'}^*,{{\bf m}'}^*+{{\bf n}}^*)}u_{P'\oplus L}+\\&q^{\frac{1}{2}\lr{{\bf m}',{\bf n}}}\sum\limits_{\begin{smallmatrix}D,A',I\\D\ncong N\end{smallmatrix}}|_D\Hom_{\widetilde{\A}}(N,\tau M')_{\tau A'\oplus I}|q^{\frac{1}{2}\Lambda((\tau {\bf a}'+{\bf i})^*,{\bf d}^*)}q^{\frac{1}{2}\Lambda({{\bf p}'}^*,({\bf a}'-{\bf p})^*)}u_{P'\oplus A'\oplus P[1]}\star u_D).
\end{split}\end{equation}
For each $L$ in the first sum in (\ref{zhds3}), we set $E=L\oplus P'$. It is easy to see that $|\Ext_{\widetilde{\A}}^1(M',N)_L|=|\Ext_{\widetilde{\A}}^1(M,N)_E|$. For each $A'$ in the second sum in (\ref{zhds3}), we set $A=A'\oplus P'$. Then
\begin{equation}
\begin{split}&u_{P'\oplus A'\oplus P[1]}\star u_D=u_{A\oplus P[1]}\star u_D=q^{\frac{1}{2}\Lambda({\bf a}^*,{\bf p}^*)}u_A\star u_{P[1]}\star u_D\\
&=q^{\frac{1}{2}\Lambda({\bf a}^*,{\bf p}^*)}u_A\star u_{I[-1]}\star u_D=q^{\frac{1}{2}\Lambda({\bf a}^*,{\bf p}^*)-\frac{1}{2}\Lambda({\bf i}^*,{\bf d}^*)}u_A\star u_{D\oplus I[-1]}.
\end{split}
\end{equation}
Note that \begin{equation}
\begin{split}
&\Lambda((\tau{{\bf a}'}+{\bf i})^\ast,{\bf d}^*)=\Lambda({^\ast(\tau{{\bf a}'}+{\bf i})},{^\ast{\bf d}})=\Lambda(-{{\bf a}'}^*+{\bf p}^*,{^\ast{\bf d}})\\
&=\Lambda(({\bf p}-{\bf a}')^*,{\bf d}^*)-\Lambda({E}'(\widetilde{Q})({\bf p}-{\bf a}'),B(\widetilde{Q}){\bf d})\\
&=\Lambda(({\bf p}-{\bf a}')^*,{\bf d}^*)+\lr{{\bf p}-{\bf a}',{\bf d}}.
\end{split}
\end{equation}
Hence, we rewrite the equation (\ref{zhds3}) as
\begin{equation}\label{dsdier}
\begin{split}
&(q^{[M,N]^1}-1)u_{M}\star u_{N}=q^{\frac{1}{2}\Lambda({{\bf m}}^*,{\bf n}^*)}\sum\limits_{E\ncong M\oplus N}{|\Ext_{\widetilde{\A}}^1(M,N)_E|}u_{E}+\\&q^{\frac{1}{2}\Lambda({{\bf m}'}^*,{\bf n}^*)-\frac{1}{2}\Lambda({{\bf p}'}^*,{{\bf m}'}^*)+\frac{1}{2}\lr{{\bf m}',{\bf n}}}\sum\limits_{\begin{smallmatrix}D,A',I\\D\ncong N\end{smallmatrix}}|_D\Hom_{\widetilde{\A}}(N,\tau M')_{\tau A'\oplus I}|q^{\frac{1}{2}x}u_{A}\star u_{D\oplus I[-1]},
\end{split}\end{equation}
where
$$x=\Lambda(({\bf p}-{\bf a}')^*,{\bf d}^*)+\lr{{\bf p}-{\bf a}',{\bf d}}+\Lambda({{\bf p}'}^*,({\bf a}'-{\bf p})^*)+\Lambda({\bf a}^*,{\bf p}^*)-\Lambda({\bf i}^*,{\bf d}^*).$$
Since $\Lambda({\bf i}^*,{\bf d}^*)=\Lambda({^*{\bf i}}+B(\widetilde{Q}){\bf i},{\bf d}^*)=\Lambda({\bf p}^*,{\bf d}^*)+\lr{{\bf d},{\bf i}}=\Lambda({\bf p}^*,{\bf d}^*)+\lr{{\bf p},{\bf d}}$,
we obtain that
\begin{equation*}
\begin{split}x&=-\Lambda({{\bf a}'}^*,{\bf d}^*)-\lr{{\bf a}',{\bf d}}+\Lambda({{\bf p}'}^*,({\bf a}'-{\bf p})^*)+\Lambda({({\bf a}'+{\bf p}')}^*,{\bf p}^*)
\\
&=\Lambda(({\bf p}'-{\bf p}+{\bf d})^*,{{\bf a}'}^*)-\lr{{\bf a}',{\bf d}}.
\end{split}
\end{equation*}
By the exact sequence (\ref{zhxl}), we have that ${\bf d}-{\bf n}+\tau{\bf m}'-\tau{\bf a}'-{\bf i}=\mathbf{0}$, and then
\begin{equation*}
\begin{split}
\mathbf{0}={^*({\bf d}-{\bf n}+\tau{\bf m}'-\tau{\bf a}'-{\bf i})}=
({\bf d}-{\bf n})^*-B(\widetilde{Q})({\bf d}-{\bf n})-({\bf m}'-{\bf a}')^*-{\bf p}^*.
\end{split}
\end{equation*}
So $({\bf p}'-{\bf p}+{\bf d})^*=({\bf p}'+{\bf m}'-{\bf a}'+{\bf n})^*+B(\widetilde{Q})({\bf d}-{\bf n})=({\bf p}'-{\bf a}+{\bf m}+{\bf n})^*+B(\widetilde{Q})({\bf d}-{\bf n})$. Hence,
\begin{equation*}
\begin{split}x&=\Lambda(({\bf p}'-{\bf a}+{\bf m}+{\bf n})^*,{{\bf a}'}^*)+\Lambda(B(\widetilde{Q})({\bf d}-{\bf n}),{{\bf a}'}^*)-\lr{{\bf a}',{\bf d}}\\
&=\Lambda(({\bf p}'-{\bf a}+{\bf m}+{\bf n})^*,{{\bf a}'}^*)+\lr{{\bf a}',{\bf d}-{\bf n}}-\lr{{\bf a}',{\bf d}}\\
&=\Lambda({{\bf p}'}^*,{{\bf a}'}^*)+\Lambda(({\bf m}-{\bf a})^*,{{\bf a}'}^*)-\Lambda({{\bf a}'}^*,{\bf n}^*)-\lr{{\bf a}',{\bf n}}.
\end{split}
\end{equation*}
Thus, the equation (\ref{dsdier}) is rewritten as
\begin{equation}
\begin{split}
&(q^{[M,N]^1}-1)u_M\star u_N=q^{\frac{1}{2}\Lambda({\bf m}^*,{\bf n}^*)}\sum\limits_{E\ncong M\oplus N}
|\Ext_{\widetilde{\A}}^1(M,N)_E|u_E+\\&\sum\limits_{\begin{smallmatrix}D,A',I\\D\ncong N\end{smallmatrix}}q^{\frac{1}{2}\Lambda(({\bf m}-{{\bf a}})^*,{({\bf n}+{\bf a})}^*)+\frac{1}{2}\lr{{{\bf m}}-{{\bf a}},{\bf n}}}|_D\Hom_{\widetilde{\A}}(N,\tau M')_{\tau A'\oplus I}|u_{A}\star u_{D\oplus I[-1]}.
\end{split}\end{equation}

For each short exact sequence $0\longrightarrow B\longrightarrow M'\longrightarrow A'\longrightarrow 0$, it is easy to see that $B$ has no nonzero projective direct summands, and $\Hom_{\widetilde{\A}}(B,P')=0$. Then $F_{A'B}^{M'}=F_{AB}^M$. Using \cite[Lemma 4.4]{Rupel2}, we obtain that
$|_D\Hom_{\widetilde{\A}}(N,\tau M')_{\tau A'\oplus I}|=|_D\Hom_{\widetilde{\A}}(N,\tau M)_{\tau A\oplus I}|.$
So we finish the proof of the first equation.

Now, we prove the second equation. Since $\Ext_{\widetilde{\A}}^1(N,M)=0$ implies $\Ext_{\widetilde{\A}}^1(N,P')=0$, we have that
$u_N\star u_{P'}=q^{\frac{1}{2}\Lambda({\bf n}^*,{{\bf p}'}^*)}u_{N\oplus P'}$. Since $u_{P'}\star u_{N}=q^{\frac{1}{2}\Lambda({{\bf p}'}^*,{\bf n}^*)}u_{N\oplus P'}$, we obtain that $u_{P'}\star u_{N}=q^{\Lambda({{\bf p}'}^*,{\bf n}^*)}u_N\star u_{P'}$. Then \begin{equation}
\begin{split}u_{P'}\star u_{N}\star u_{M'}&=q^{\Lambda({{\bf p}'}^*,{\bf n}^*)}u_N\star u_{P'}\star u_{M'}\\&=q^{\Lambda({{\bf p}'}^*,{\bf n}^*)+\frac{1}{2}\Lambda({{\bf p}'}^*,{{\bf m}'}^*)}u_N\star u_M\\&=q^{\Lambda({{\bf p}'}^*,{\bf n}^*)+\frac{1}{2}\Lambda({{\bf p}'}^*,{{\bf m}'}^*)+\frac{1}{2}\Lambda({\bf n}^*,{\bf m}^*)}u_{M\oplus N}.\end{split}\end{equation}
Left multiplying $u_{P'}$ on both sides of the equation (\ref{zhds2}), we obtain that
\begin{equation}
\begin{split}
&(q^{[M,N]^1}-1)q^{\Lambda({{\bf p}'}^*,{\bf n}^*)+\frac{1}{2}\Lambda({{\bf p}'}^*,{{\bf m}'}^*)+\frac{1}{2}\Lambda({\bf n}^*,{\bf m}^*)}u_{M\oplus N}\\&=q^{\frac{1}{2}\Lambda({\bf n}^*,{{\bf m}'}^*)}(\sum\limits_{L\ncong M'\oplus N}{|\Ext_{\widetilde{\A}}^1(M',N)_L|}q^{\frac{1}{2}\Lambda({{\bf p}'}^*,{{\bf m}'}^*+{{\bf n}}^*)}u_{P'\oplus L}+\\&q^{\frac{1}{2}\lr{{\bf m}',{\bf n}}+[M,N]^1}\sum\limits_{\begin{smallmatrix}D,A',I\\D\ncong N\end{smallmatrix}}|_D\Hom_{\widetilde{\A}}(N,\tau M')_{\tau A'\oplus I}|q^{\frac{1}{2}\Lambda((\tau {\bf a}'+{\bf i})^*,{\bf d}^*)}q^{\frac{1}{2}\Lambda({{\bf p}'}^*,({\bf a}'-{\bf p})^*)}u_{P'\oplus A'\oplus P[1]}\star u_D).
\end{split}\end{equation}
Noting that $$\Lambda({{\bf p}'}^*,{\bf n}^*)+\frac{1}{2}\Lambda({{\bf p}'}^*,{{\bf m}'}^*)+\frac{1}{2}\Lambda({\bf n}^*,{\bf m}^*)=\frac{1}{2}\Lambda({\bf n}^*,{{\bf m}'}^*)+\frac{1}{2}\Lambda({{\bf p}'}^*,{{\bf m}'}^*)+\frac{1}{2}\Lambda({{\bf p}'}^*,{{\bf n}}^*),$$
we get that
\begin{equation}
\begin{split}
&(q^{[M,N]^1}-1)u_{M\oplus N}=\\&\sum\limits_{E\ncong M\oplus N}
|\Ext_{\widetilde{\A}}^1(M,N)_E|u_E+\sum\limits_{\begin{smallmatrix}D,A',I\\D\ncong N\end{smallmatrix}}q^y|_D\Hom_{\widetilde{\A}}(N,\tau M')_{\tau A'\oplus I}|u_{A}\star u_{D\oplus I[-1]}
\end{split}\end{equation}
where $A=A'\oplus P'$ and \begin{equation}
\begin{split}y&=\frac{1}{2}\lr{{\bf m}',{\bf n}}+[M,N]^1+\frac{1}{2}\Lambda((\tau {\bf a}'+{\bf i})^*,{\bf d}^*)+\frac{1}{2}\Lambda({{\bf p}'}^*,({\bf a}'-{\bf p})^*)\\&\quad\quad+\frac{1}{2}\Lambda({\bf a}^*,{\bf p}^*)-\frac{1}{2}\Lambda({\bf i}^*,{\bf d}^*)-\frac{1}{2}\Lambda({{\bf p}'}^*,{\bf n}^*)-\frac{1}{2}\Lambda({{\bf p}'}^*,{{\bf m}'}^*)\\&=\frac{1}{2}\lr{{\bf m}',{\bf n}}+[M,N]^1+\frac{1}{2}x-\frac{1}{2}\Lambda({{\bf p}'}^*,{\bf n}^*)-\frac{1}{2}\Lambda({{\bf p}'}^*,{{\bf m}'}^*)\\&=\frac{1}{2}\lr{{\bf m}',{\bf n}}+[M,N]^1+\frac{1}{2}\Lambda(({\bf p}'-{\bf a}+{\bf m}+{\bf n})^*,{{\bf a}'}^*)\\&\quad\quad-\frac{1}{2}\lr{{\bf a}',{\bf n}}-\frac{1}{2}\Lambda({{\bf p}'}^*,{\bf n}^*)-\frac{1}{2}\Lambda({{\bf p}'}^*,{{\bf m}'}^*)\\&=\frac{1}{2}\Lambda(({\bf m}+{\bf n}-{{\bf a}})^*,{{\bf a}}^*)+\frac{1}{2}\lr{{{\bf m}}-{{\bf a}},{\bf n}}+[M,N]^1.\end{split}\end{equation}
Therefore, we complete the proof.
\end{proof}

We fail to establish a homomorphism of algebras from $\mathcal {D}\mathcal {H}_\Lambda^{{cl}_2}(\widetilde{\A})$ to the quantum torus $\mathcal{T}_\Lambda$. That is, we can not immediately get the corresponding equation of Theorem \ref{ddl} in the quantum cluster algebra. In the next section, we will directly prove that the desired equation holds in $\mathcal{T}_\Lambda$ via the quantum cluster algebra approach. What's more, for the quantum cluster algebra case, we do not need to assume the condition that $\Ext_{\widetilde{\A}}^1(N,M)=0$ as in Theorem \ref{ddl}.

\section{Cluster multiplication theorem, $\textrm{II}$}
In this section, we prove the cluster multiplication theorem for the quantum cluster characters $X_M$ and $X_N$ in the quantum cluster algebras, which are of the same form as (\ref{halltimes}) in Theorem \ref{ddl}.

For the simplicity of notation, we set \begin{equation*}[M,N]^0:=\dim_k\Hom_{\widetilde{\A}}(M,N)~\text{and}~\varepsilon_{MN}^E:=|\Ext^1_{\widetilde{\A}}(M,N)_E|\end{equation*}
for any $M,N,E\in\widetilde{\A}$.

First of all, let us collect the following lemmas for later use.

\begin{lemma} $($Green's formula \cite{Gr95}$)$
For any $M,N,X,Y\in\widetilde{\A}$, we have that
\begin{equation}
\sum\limits_{E}\varepsilon_{MN}^EF_{XY}^E=\sum\limits_{A,B,C,D}q^{[M,N]^0-[A,C]^0-[B,D]^0-\lr{{\bf a},{\bf d}}}F_{AB}^MF_{CD}^N\varepsilon_{AC}^X\varepsilon_{BD}^Y.
\end{equation}
\end{lemma}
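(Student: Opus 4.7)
The plan is to prove Green's formula by a classical double-counting argument bridged by the $3\times 3$ (nine) lemma. Both sides will be shown to count the cardinality of a single set, namely
$$\Omega=\bigl\{(\xi,V):\xi\in\mathrm{Ext}^1_{\widetilde{\A}}(M,N)_E\text{ for some }E,\ V\subset E,\ V\cong Y,\ E/V\cong X\bigr\}.$$
Indeed, grouping $\Omega$ by the isomorphism class of the middle term $E$ yields $|\Omega|=\sum_E\varepsilon_{MN}^E F_{XY}^E$, which is the LHS.

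For the RHS, the plan is to associate to each $(\xi,V)\in\Omega$ a $3\times 3$ commutative diagram with exact rows and columns. I set $D:=V\cap N$ (the kernel of the composite $V\hookrightarrow E\twoheadrightarrow M$), $B:=V/D$ (the image of $V$ in $M$), $C:=N/D$ and $A:=M/B$. The middle column is then $\xi$, the middle row is $0\to V\to E\to X\to 0$, and the four outer short exact sequences read $0\to B\to M\to A\to 0$, $0\to D\to N\to C\to 0$, $0\to C\to X\to A\to 0$, and $0\to D\to Y\to B\to 0$. For fixed isomorphism classes of $A,B,C,D$, these contribute, respectively, factors $F_{AB}^M$, $F_{CD}^N$, $\varepsilon_{AC}^X$ and $\varepsilon_{BD}^Y$; summing over $A,B,C,D$ accounts for every way to carry out this decomposition once.

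The remaining step, and the main obstacle, is to count the fiber of this decomposition map, i.e.\ to count how many configurations $(\xi,V)\in\Omega$ produce a prescribed quadruple of corner sequences. Once the corner data is fixed, reconstructing the middle column $\xi$ and the embedding $V\hookrightarrow E$ amounts to choosing compatible lifts in the $9$-lemma; the set of such lifts is naturally a torsor whose size I would compute by combining the long exact $\mathrm{Hom}$-$\mathrm{Ext}$ sequences associated to the horizontal row $0\to B\to M\to A\to 0$ and the vertical column $0\to D\to N\to C\to 0$. The hereditarity of $\widetilde{\A}$ truncates these sequences after $\mathrm{Ext}^1$, so after careful bookkeeping the fiber count reduces to a ratio of Hom-cardinalities and the answer takes the form $q^{[M,N]^0-[A,C]^0-[B,D]^0-\lr{{\bf a},{\bf d}}}$, giving the RHS.

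The most delicate point is the appearance of the Euler-form correction $-\lr{{\bf a},{\bf d}}$. It arises when one passes from Hom-cardinalities to Ext-cardinalities: specifically, the obstruction group $\mathrm{Ext}^1_{\widetilde{\A}}(A,D)$ controls whether a chosen morphism $A\to N/D$ lifts across the two chosen extensions, and its size is pinned down by $\dim_k\mathrm{Hom}_{\widetilde{\A}}(A,D)-\dim_k\mathrm{Ext}^1_{\widetilde{\A}}(A,D)=\lr{{\bf a},{\bf d}}$. After all Hom/Ext dimensions at the other corners cancel in pairs, this is the only surviving Euler-form contribution, which is the classical outcome of Ringel-Green's computation in \cite{Gr95}.
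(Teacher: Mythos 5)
The paper does not prove this lemma; it is quoted verbatim from Green \cite{Gr95}, so there is no internal proof to compare against. What you have written is a faithful outline of Green's original strategy: you define the correct ``cross'' set $\Omega$, you extract the $3\times 3$ diagram in the right way (with the paper's conventions, $F_{AB}^{M}$ counts subobjects isomorphic to $B$ with quotient $A$, and your identifications $D=V\cap N$, $B=V/D$, $C=N/D$, $A=M/B$ are consistent), and you correctly recognize that the LHS is the count of $\Omega$ grouped by the middle term $E$, while the RHS is the count grouped by corner data with a $q$-power fiber factor.

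However, as a proof your proposal has a genuine gap, and you flag it yourself: the fiber count is asserted, not established. That count is the \emph{entire} content of Green's theorem --- showing (i) that the assignment $(\xi,V)\mapsto$ (corner data) is well-posed and surjective onto the right index set, (ii) that its fiber is a torsor of constant cardinality depending only on isomorphism classes, and (iii) that this cardinality equals $q^{[M,N]^{0}-[A,C]^{0}-[B,D]^{0}-\lr{\bf a,\bf d}}$. In Green's paper this takes several pages of delicate bookkeeping with long exact sequences. Your heuristic for the Euler-form term is also slightly off: you write that the size of $\Ext^{1}_{\widetilde{\A}}(A,D)$ is ``pinned down'' by $\dim\Hom(A,D)-\dim\Ext^{1}(A,D)=\lr{\bf a,\bf d}$, but that is just the definition of the Euler form, not a determination of either group's size. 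What actually enters is the \emph{ratio} $|\Ext^{1}(A,D)|/|\Hom(A,D)|=q^{-\lr{\bf a,\bf d}}$, which (after cancellations against $|\Hom(M,N)|$, $|\Hom(A,C)|$, $|\Hom(B,D)|$) produces the full exponent. You also gloss over a definitional subtlety in $\Omega$: an element $\xi\in\Ext^{1}_{\widetilde{\A}}(M,N)_{E}$ only determines its middle term up to isomorphism, so ``$V\subset E$'' needs a fixed representative $E$ per isomorphism class for the double count $\sum_{E}\varepsilon_{MN}^{E}F_{XY}^{E}$ to literally equal $|\Omega|$. In short: right architecture, but the crux is missing and would need to be supplied (or cited, as the paper does).
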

\begin{lemma}
For any ${\bf a},{\bf b},{\bf c},{\bf d}\in\mathbb{Z}^m$,
we have that $$\Lambda(-{\bf b}^*-{^*{\bf a}},-{\bf d}^*-{^*{\bf c}})=\Lambda(({\bf a}+{\bf b})^*,({\bf c}+{\bf d})^*)+\lr{{\bf b},{\bf c}}-\lr{{\bf d},{\bf a}}.$$
\end{lemma}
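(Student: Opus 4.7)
The plan is to unfold both sides by bilinearity of $\Lambda$ and then convert everything to expressions built from $B(\widetilde{Q})$ and $E(\widetilde{Q})$ (or $E'(\widetilde{Q})$), so that Lemma \ref{sjishu} applies directly. The key algebraic identity to exploit is ${\bf x}^* - {^*{\bf x}} = B(\widetilde{Q}){\bf x}$, which follows immediately from $B(\widetilde{Q}) = R'(\widetilde{Q}) - R(\widetilde{Q}) = E'(\widetilde{Q}) - E(\widetilde{Q})$.

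First I would expand the left-hand side as
\begin{equation*}
\Lambda(-{\bf b}^*-{^*{\bf a}},-{\bf d}^*-{^*{\bf c}})
= \Lambda({\bf b}^*,{\bf d}^*) + \Lambda({\bf b}^*,{^*{\bf c}}) + \Lambda({^*{\bf a}},{\bf d}^*) + \Lambda({^*{\bf a}},{^*{\bf c}}),
\end{equation*}
and the right-hand side similarly as
\begin{equation*}
\Lambda({\bf a}^*,{\bf c}^*) + \Lambda({\bf a}^*,{\bf d}^*) + \Lambda({\bf b}^*,{\bf c}^*) + \Lambda({\bf b}^*,{\bf d}^*) + \lr{{\bf b},{\bf c}} - \lr{{\bf d},{\bf a}}.
\end{equation*}
After cancelling the common term $\Lambda({\bf b}^*,{\bf d}^*)$ and using Lemma \ref{sjishu}(4) to identify $\Lambda({^*{\bf a}},{^*{\bf c}})$ with $\Lambda({\bf a}^*,{\bf c}^*)$, the identity reduces to the two cross-terms
\begin{equation*}
\Lambda({\bf b}^*,{^*{\bf c}}) = \Lambda({\bf b}^*,{\bf c}^*) + \lr{{\bf b},{\bf c}}, \qquad \Lambda({^*{\bf a}},{\bf d}^*) = \Lambda({\bf a}^*,{\bf d}^*) - \lr{{\bf d},{\bf a}}.
\end{equation*}

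Each of these is proved by writing ${\bf c}^* = {^*{\bf c}} + B(\widetilde{Q}){\bf c}$ (respectively ${\bf a}^* = {^*{\bf a}} + B(\widetilde{Q}){\bf a}$), expanding, invoking skew-symmetry of $\Lambda$, and then applying Lemma \ref{sjishu}(1) to the $B(\widetilde{Q})$-terms; the remaining $\Lambda({^*{\bf b}},{^*{\bf c}})$-type pieces convert to $\Lambda({\bf b}^*,{\bf c}^*)$-type pieces via Lemma \ref{sjishu}(4). Summing the two gives exactly the required equality.

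There is no real obstacle here: the statement is a routine compatibility between the bilinear form $\Lambda$ and the maps ${^*(-)}$, ${(-)^*}$, and everything reduces to a direct verification once the atomic identities of Lemma \ref{sjishu} are plugged in. The only thing one needs to be a little careful about is the sign bookkeeping coming from skew-symmetry of $\Lambda$ when swapping the arguments of $\Lambda({^*{\bf a}},B(\widetilde{Q}){\bf d})$ before applying part (1) of Lemma \ref{sjishu}.
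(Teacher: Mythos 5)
Your proposal is correct and spells out exactly what the paper's one-line proof ("It is easily proved by using Lemma \ref{sjishu}") leaves to the reader: expand by bilinearity, identify $\Lambda({^*{\bf a}},{^*{\bf c}})$ with $\Lambda({\bf a}^*,{\bf c}^*)$ via part (4), and handle the two cross-terms by substituting ${\bf x}^*-{^*{\bf x}}=B(\widetilde{Q}){\bf x}$ and invoking parts (1) or (2). Your written description of which argument gets decomposed is internally inconsistent (the first sentence says to rewrite ${\bf c}^*$ and ${\bf a}^*$, which would route through part (2) directly, while the later sentences about swapping $\Lambda({^*{\bf a}},B(\widetilde{Q}){\bf d})$ and using parts (1) and (4) correspond to instead decomposing ${\bf b}^*$ and ${\bf d}^*$), but both routes yield the same two cross-term identities $\Lambda({\bf b}^*,{^*{\bf c}})=\Lambda({\bf b}^*,{\bf c}^*)+\lr{{\bf b},{\bf c}}$ and $\Lambda({^*{\bf a}},{\bf d}^*)=\Lambda({\bf a}^*,{\bf d}^*)-\lr{{\bf d},{\bf a}}$, so this is a presentational slip rather than a gap.
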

\begin{proof}
It is easily proved by using Lemma \ref{sjishu}.
\end{proof}
\begin{lemma}\label{zscx}
Let $M,N\in\widetilde{\A}$. Keep the notations in (\ref{zhxl}). Set $A=A'\oplus P'$, where $P'$ is the maximal projective direct summand of $M$. Then we have that
\begin{flalign*}&\Lambda(({\bf m}-{{\bf a}})^*,{({\bf n}+{\bf a})}^*)+\lr{{{\bf m}}-{{\bf a}},{\bf n}}
\\&=\Lambda({\bf m}^*,{\bf n}^*)+\lr{{\bf m},{\bf n}}-\lr{{\bf a},{\bf d}}-\Lambda({\bf a}^*,({\bf d}-{\bf i})^*)+\lr{{\bf a},{\bf i}}.
\end{flalign*}
\end{lemma}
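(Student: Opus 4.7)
My plan is to verify the identity by direct expansion, reducing everything to a single relation that follows from the dimension-vector count of the four-term exact sequence (\ref{zhxl}) together with the $\uptau$-identities from the lemma preceding Theorem~\ref{ddl}.

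First I would use bilinearity and skew-symmetry of $\Lambda$ to expand the left-hand side as
\[
\Lambda(({\bf m}-{\bf a})^*,({\bf n}+{\bf a})^*) = \Lambda({\bf m}^*,{\bf n}^*) + \Lambda({\bf m}^*,{\bf a}^*) + \Lambda({\bf n}^*,{\bf a}^*),
\]
using $\Lambda({\bf a}^*,{\bf a}^*)=0$, and expand $\lr{{\bf m}-{\bf a},{\bf n}} = \lr{{\bf m},{\bf n}}-\lr{{\bf a},{\bf n}}$. Doing the analogous expansion on the right-hand side and cancelling the common summand $\Lambda({\bf m}^*,{\bf n}^*)+\lr{{\bf m},{\bf n}}$, the claim reduces to
\[
\Lambda\bigl(({\bf m}+{\bf n}-{\bf d}+{\bf i})^*,{\bf a}^*\bigr) = \lr{{\bf a},{\bf n}-{\bf d}+{\bf i}}.
\]

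Next I would exploit the exact sequence (\ref{zhxl}), which on dimension vectors gives ${\bf n}-{\bf d}+{\bf i} = \uptau{\bf m}' - \uptau{\bf a}' = \uptau({\bf m}-{\bf a})$ since $M=M'\oplus P'$ and $A=A'\oplus P'$ share the maximal projective summand $P'$. Substituting, the right-hand side becomes $\lr{{\bf a},\uptau({\bf m}-{\bf a})} = -\lr{{\bf m}-{\bf a},{\bf a}}$ by the preceding lemma, while on the left I use ${^*\uptau\beta}=-\beta^*$ and Lemma~\ref{sjishu}(4) (which gives $\Lambda(\alpha^*,\beta^*)=\Lambda({^*\alpha},{^*\beta})$) to rewrite $(\uptau({\bf m}-{\bf a}))^*$; a short calculation using Lemma~\ref{sjishu}(2) to convert between the pairings of $^*(\cdot)$ and $(\cdot)^*$ collapses everything to $-\lr{{\bf m}-{\bf a},{\bf a}}$, matching the right-hand side.

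The routine obstacle is bookkeeping: the identity mixes $\alpha^*$, $^*\alpha$, and $\uptau$ in asymmetric ways, and one must carefully track the correction terms $\pm\lr{\alpha,\beta}$ that appear whenever one swaps $\alpha^*$ for $^*\alpha$ via $B(\widetilde{Q})\alpha=\alpha^*-{^*\alpha}$ and invokes the four parts of Lemma~\ref{sjishu}. There is no conceptual obstruction; the content is entirely the exact-sequence identity ${\bf n}-{\bf d}+{\bf i}=\uptau({\bf m}-{\bf a})$, and the proof is a careful but mechanical unwinding.
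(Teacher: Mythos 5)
Your plan is correct and, modulo minor reorganization, is essentially the paper's own proof: both reduce the identity to the dimension-vector relation ${\bf n}-{\bf d}+{\bf i}=\uptau({\bf m}-{\bf a})$ coming from the exact sequence (\ref{zhxl}), then close with $\lr{{\bf a},\uptau\beta}=-\lr{\beta,{\bf a}}$, ${^*\uptau\beta}=-\beta^*$, and Lemma \ref{sjishu}. The only difference is cosmetic -- you cancel $\Lambda({\bf m}^*,{\bf n}^*)+\lr{{\bf m},{\bf n}}$ up front and verify the reduced identity, while the paper moves $\Lambda({\bf a}^*,({\bf d}-{\bf i})^*)$ to the left and simplifies the resulting sum.
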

\begin{proof}
By the exact sequence (\ref{zhxl}), we have that ${\bf d}-{\bf i}={\bf n}-\tau({\bf m}'-{\bf a}')={\bf n}-\tau({\bf m}-{\bf a})$, and then
${^*({\bf d}-{\bf i})}={^*{\bf n}}-{^*(\tau({\bf m}-{\bf a}))}={^*{\bf n}}+({\bf m}-{\bf a})^*$. Thus, \begin{equation}
\begin{split}\Lambda({\bf a}^*,({\bf d}-{\bf i})^*)&=\Lambda({^*{\bf a}},{^*(\bf{d}-{\bf i})})\\&=\Lambda({^*{\bf a}},{^*{\bf n}})+\Lambda({^*{\bf a}},({\bf m}-{\bf a})^*)\\&=\Lambda({{\bf a}^*},{{\bf n}^*})+\Lambda({{\bf a}^*},({\bf m}-{\bf a})^*)-\Lambda({B(\widetilde{Q})}{\bf a},({\bf m}-{\bf a})^*)\\&=\Lambda({{\bf a}^*},{{\bf n}^*})+\Lambda({{\bf a}^*},({\bf m}-{\bf a})^*)-\lr{{\bf m}-{\bf a},{\bf a}}.\end{split}\end{equation}
Hence,
\begin{equation}
\begin{split}&\Lambda(({\bf m}-{{\bf a}})^*,{({\bf n}+{\bf a})}^*)+\lr{{{\bf m}}-{{\bf a}},{\bf n}}+\Lambda({\bf a}^*,({\bf d}-{\bf i})^*)\\&=\Lambda({\bf m}^*,{\bf n}^*)+\lr{{\bf m}-{\bf a},{\bf n}-{\bf a}}\\
&=\Lambda({\bf m}^*,{\bf n}^*)+\lr{{\bf m},{\bf n}}-\lr{{\bf m}-{\bf a},{\bf a}}-\lr{{\bf a},{\bf n}}\\
&=\Lambda({\bf m}^*,{\bf n}^*)+\lr{{\bf m},{\bf n}}+\lr{{\bf a},\tau({\bf m}-{\bf a})}-\lr{{\bf a},{\bf n}}\\
&=\Lambda({\bf m}^*,{\bf n}^*)+\lr{{\bf m},{\bf n}}+\lr{{\bf a},{\bf n}-{\bf d}+{\bf i}}-\lr{{\bf a},{\bf n}}\\
&=\Lambda({\bf m}^*,{\bf n}^*)+\lr{{\bf m},{\bf n}}-\lr{{\bf a},{\bf d}}+\lr{{\bf a},{\bf i}}.\end{split}\end{equation}
Therefore, we finish the proof.
\end{proof}
\begin{theorem}\label{ddlz}
Let $M,N\in\widetilde{\A}$. Then we have the following equation in $\mathcal{T}_\Lambda:$
\begin{equation}
\begin{split}
&(q^{[M,N]^1}-1)X_M X_N=q^{\frac{1}{2}\Lambda({\bf m}^*,{\bf n}^*)}\sum\limits_{E\ncong M\oplus N}
|\Ext_{\widetilde{\A}}^1(M,N)_E|X_E+\\&\sum\limits_{\begin{smallmatrix}D,A,I\\D\ncong N\end{smallmatrix}}q^{\frac{1}{2}\Lambda(({\bf m}-{{\bf a}})^*,{({\bf n}+{\bf a})}^*)+\frac{1}{2}\lr{{{\bf m}}-{{\bf a}},{\bf n}}}|_D\Hom_{\widetilde{\A}}(N,\tau M)_{\tau A\oplus I}|X_{A} X_{D\oplus I[-1]},
\end{split}\end{equation}
where each $A$ has the same maximal projective direct summand as $M$.
\end{theorem}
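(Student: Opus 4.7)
The strategy is to verify the identity directly in the quantum torus $\mathcal{T}_\Lambda$, bypassing the Hall algebra $\mathcal{DH}_\Lambda^{cl_2}(\widetilde{\A})$ whose image in $\mathcal{T}_\Lambda$ is not well-defined by an algebra homomorphism. Concretely, I will expand both sides using the explicit formula (\ref{qcltz}) for the quantum cluster character, collect $X^\gamma$ coefficients, and match them via Green's formula.

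First, I would use (\ref{qcltz}) to write
\[
X_M X_N = \sum_{\mathbf{e},\mathbf{f}} v^{\langle -\mathbf{e},\mathbf{m}-\mathbf{e}\rangle + \langle -\mathbf{f},\mathbf{n}-\mathbf{f}\rangle}|\mathrm{Gr}_{\mathbf{e}} M|\,|\mathrm{Gr}_{\mathbf{f}} N|\, X^{-\mathbf{e}^\ast - {^\ast(\mathbf{m}-\mathbf{e})}} X^{-\mathbf{f}^\ast - {^\ast(\mathbf{n}-\mathbf{f})}},
\]
apply the quantum torus relation (\ref{ljhtorus}) to combine the two monomials, and reindex by $\boldsymbol{\gamma} = \mathbf{e}+\mathbf{f}$ and $\boldsymbol{\delta} = (\mathbf{m}-\mathbf{e})+(\mathbf{n}-\mathbf{f})$. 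The resulting coefficient of each monomial $X^{-\boldsymbol{\gamma}^\ast - {^\ast\boldsymbol{\delta}}}$ is a weighted count of submodule pairs $(V,W)$ with $V\subseteq M$, $W\subseteq N$, $\Dim V + \Dim W = \boldsymbol{\gamma}$. The twisting exponent simplifies using Lemma \ref{sjishu} to $\tfrac{1}{2}\Lambda(\mathbf{m}^\ast,\mathbf{n}^\ast)$ plus homological terms.

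Second, I would expand the right-hand side. The sum $\sum_{E\ncong M\oplus N}|\Ext^1(M,N)_E|X_E$ produces, via (\ref{qcltz}), the contribution of all genuine extensions; and $X_A X_{D\oplus I[-1]}$ is computed by (\ref{qre4}) and (\ref{qcltz}) as a weighted sum over subspaces of $A$. The plan is to invoke Green's formula to convert $\sum_E \varepsilon_{MN}^E |\mathrm{Gr}_{\boldsymbol{\gamma}} E|$ into $\sum F_{AB}^M F_{CD}^N \varepsilon_{AC}^V \varepsilon_{BD}^W$ type expressions, which is exactly the shape produced on the left after reindexing by subobjects. The missing term $E=M\oplus N$ on the right (excluded from the sum) is what creates the $(q^{[M,N]^1}-1)$ prefactor on the left, since Green's formula includes the trivial split extension.

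Third, the leftover terms on the right (those involving $X_A X_{D\oplus I[-1]}$ with $D\ncong N$) correspond precisely to the non-trivial morphisms $\theta: N\to \tau M$ of (\ref{zhxl}), via the Auslander--Reiten formula $\Ext^1(M,N)\cong D\Hom(N,\tau M)$ combined with $|{}_D\Hom(N,\tau M)_{\tau A\oplus I}|$. I would rewrite the Hall product $\sum F_{AB}^M F_{CD}^N\varepsilon_{AC}^V\varepsilon_{BD}^W$ in these terms and check that the twisting factor matches using Lemma \ref{zscx}, which is manifestly designed to reconcile $\Lambda((\mathbf{m}-\mathbf{a})^\ast,(\mathbf{n}+\mathbf{a})^\ast) + \langle \mathbf{m}-\mathbf{a},\mathbf{n}\rangle$ with $\Lambda(\mathbf{m}^\ast,\mathbf{n}^\ast)+\langle \mathbf{m},\mathbf{n}\rangle$ plus correction terms involving $\mathbf{d},\mathbf{i},\mathbf{a}$.

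The main obstacle is the bookkeeping of the twisting exponents: every reindexing produces powers of $v$ coming from the quantum torus, the Euler form, and the twisted multiplication defining the quantum cluster character, and these must all be collapsed into the single exponent $\tfrac{1}{2}\Lambda((\mathbf{m}-\mathbf{a})^\ast,(\mathbf{n}+\mathbf{a})^\ast) + \tfrac{1}{2}\langle \mathbf{m}-\mathbf{a},\mathbf{n}\rangle$ appearing in the statement. This is exactly the purpose of Lemma \ref{zscx} and the explicit identification ${^\ast\boldsymbol{\tau}\mathbf{n}}=-\mathbf{n}^\ast$ noted earlier; with them the coefficient match is forced, and no hypothesis such as $\Ext^1(N,M)=0$ is needed because Green's formula is symmetric in its two arguments.
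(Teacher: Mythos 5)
Your strategy is essentially the paper's own: expand $X_M X_N$ and the right-hand side via the explicit formula (\ref{qcltz}), identify the coefficient of each monomial $X^{-(\mathbf{b}+\mathbf{d})^\ast - {}^\ast(\mathbf{a}+\mathbf{c})}$, use Green's formula to rewrite $\sum_E\varepsilon_{MN}^E X_E$ in terms of products $F_{AB}^M F_{CD}^N$, and reconcile twisting exponents via Lemma~\ref{zscx}; the observation that no $\Ext^1(N,M)=0$ hypothesis is needed because Green's formula is symmetric also matches the paper's remark at the end of Section~6. The only thing your sketch elides that the paper relies on in a substantial way is the translation of $|{}_D\Hom_{\widetilde{\A}}(N,\tau M)_{\tau A\oplus I}|$ into Ringel--Hall numbers via \cite[Lemma 4.4]{Rupel2} together with the associativity formulas $\sum_A F_{KL}^A F_{AB}^M = \sum_{\tilde A} F_{K\tilde A}^M F_{LB}^{\tilde A}$ (and its dual), which are precisely what allows the second sum to be unified with the Green-formula expansion of the first; without those identities the ``bookkeeping of twisting exponents'' you flag as the obstacle cannot actually be closed, since the two sums run over incompatible index sets.
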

\begin{proof}

Recall from (\ref{qcltz}) that
\begin{flalign*}X_M=\sum\limits_{\mathbf{e}}v^{-\lr{\mathbf{e},\mathbf{m}-\mathbf{e}}}|\mathrm{Gr}_{\mathbf{e}}M|
X^{-\mathbf{e}^\ast-^\ast(\mathbf{m}-\mathbf{e})}=
\sum\limits_{A,B}q^{-\frac{1}{2}\lr{\mathbf{b},\mathbf{a}}}F_{AB}^M
X^{-\mathbf{b}^\ast-^\ast\mathbf{a}}.\end{flalign*}
Then
\begin{flalign*}
X_MX_N&=\sum\limits_{A,B}q^{-\frac{1}{2}\lr{\mathbf{b},\mathbf{a}}}F_{AB}^M
X^{-\mathbf{b}^\ast-^\ast\mathbf{a}}\sum\limits_{C,D}q^{-\frac{1}{2}\lr{\mathbf{d},\mathbf{c}}}F_{CD}^N
X^{-\mathbf{d}^\ast-^\ast\mathbf{c}}\\
&=\sum\limits_{A,B,C,D}q^{-\frac{1}{2}\lr{\mathbf{b},\mathbf{a}}-\frac{1}{2}\lr{\mathbf{d},\mathbf{c}}+\frac{1}{2}\Lambda(-\mathbf{b}^\ast-^\ast\mathbf{a},-\mathbf{d}^\ast-^\ast\mathbf{c})}
F_{AB}^MF_{CD}^NX^{-({\bf b}+{\bf d})^\ast-^\ast({\bf a}+{\bf c})}\\
&=q^{\frac{1}{2}\Lambda({\bf m}^*,{\bf n}^*)}\sum\limits_{A,B,C,D}q^{\lr{{\bf b},{\bf c}}-\frac{1}{2}\lr{{\bf b}+{\bf d},{\bf a}+{\bf c}}}F_{AB}^MF_{CD}^NX^{-({\bf b}+{\bf d})^\ast-^\ast({\bf a}+{\bf c})}.
\end{flalign*}
Using Green's formula, we have that
\begin{flalign*}
&\sum\limits_{E}\varepsilon_{MN}^EX_E=\sum\limits_{E,X,Y}\varepsilon_{MN}^Eq^{-\frac{1}{2}\lr{{\bf y},{\bf x}}}F_{XY}^EX^{-{\bf y}^*-^*{\bf x}}\\
&=\sum\limits_{A,B,C,D,X,Y}q^{[M,N]^0-[A,C]^0-[B,D]^0-\lr{{\bf a},{\bf d}}}q^{-\frac{1}{2}\lr{{\bf b}+{\bf d},{\bf a}+{\bf c}}}F_{AB}^MF_{CD}^N\varepsilon_{AC}^X\varepsilon_{BD}^YX^{-({\bf b}+{\bf d})^\ast-^\ast({\bf a}+{\bf c})}\\
&=\sum\limits_{A,B,C,D}q^{[M,N]^1}q^{\lr{{\bf b},{\bf c}}}q^{-\frac{1}{2}\lr{{\bf b}+{\bf d},{\bf a}+{\bf c}}}F_{AB}^MF_{CD}^NX^{-({\bf b}+{\bf d})^\ast-^\ast({\bf a}+{\bf c})}.
\end{flalign*}

Using an equation on Hall numbers (cf. \cite{Hubery1}, \cite[Lemma 4.11]{Rupel2}) \begin{equation}
\sum\limits_{X,Y}F_{XY}^{M\oplus N}=\sum\limits_{A,B,C,D}q^{[B,C]^0}F_{AB}^MF_{CD}^N,
\end{equation}
we obtain that
\begin{equation}
X_{M\oplus N}=\sum\limits_{A,B,C,D}q^{-\frac{1}{2}\lr{{\bf b}+{\bf d},{\bf a}+{\bf c}}}q^{[B,C]^0}F_{AB}^MF_{CD}^NX^{-({\bf b}+{\bf d})^\ast-^\ast({\bf a}+{\bf c})}.
\end{equation}
Set $$\sigma_1:=q^{\frac{1}{2}\Lambda({\bf m}^*,{\bf n}^*)}\sum\limits_{E\ncong M\oplus N}
\frac{|\Ext_{\widetilde{\A}}^1(M,N)_E|}{q^{[M,N]^1}-1}X_E.$$ Then
\begin{flalign*}
\sigma_1&=
q^{\frac{1}{2}\Lambda({\bf m}^*,{\bf n}^*)}\frac{\sum\limits_{E}|\Ext_{\widetilde{\A}}^1(M,N)_E|X_E-X_{M\oplus N}}{q^{[M,N]^1}-1}\\
&=q^{\frac{1}{2}\Lambda({\bf m}^*,{\bf n}^*)}\sum\limits_{A,B,C,D}\frac{q^{[M,N]^1}-q^{[B,C]^1}}{q^{[M,N]^1}-1}q^{\lr{{\bf b},{\bf c}}}q^{-\frac{1}{2}\lr{{\bf b}+{\bf d},{\bf a}+{\bf c}}}F_{AB}^MF_{CD}^NX^{-({\bf b}+{\bf d})^\ast-^\ast({\bf a}+{\bf c})}.
\end{flalign*}

Set $$\sigma_2:=\sum\limits_{\begin{smallmatrix}D,A,I\\D\ncong N\end{smallmatrix}}q^{\frac{1}{2}\Lambda(({\bf m}-{{\bf a}})^*,{({\bf n}+{\bf a})}^*)+\frac{1}{2}\lr{{{\bf m}}-{{\bf a}},{\bf n}}}\frac{|_D\Hom_{\widetilde{\A}}(N,\tau M)_{\tau A\oplus I}|}{q^{[M,N]^1}-1}X_{A} X_{D\oplus I[-1]}.$$
Then
\begin{flalign*}
\sigma_2&=\sum\limits_{\begin{smallmatrix}D,A,I,K,L,X,Y\\D\ncong N\end{smallmatrix}}q^{\frac{1}{2}\Lambda(({\bf m}-{{\bf a}})^*,{({\bf n}+{\bf a})}^*)+\frac{1}{2}\lr{{{\bf m}}-{{\bf a}},{\bf n}}}\frac{|_D\Hom_{\widetilde{\A}}(N,\tau M)_{\tau A\oplus I}|}{q^{[M,N]^1}-1}\\&\quad\quad\quad\quad\quad\quad\times q^{-\frac{1}{2}\lr{{\bf l},{\bf k}}}F_{KL}^AX^{-{\bf l}^*-^*{\bf k}}q^{-\frac{1}{2}\lr{{\bf y},{\bf x}-{\bf i}}}F_{XY}^DX^{-{\bf y}^*-^*({\bf x}-{\bf i})}\\
&=\sum\limits_{\begin{smallmatrix}D,A,I,K,L,X,Y\\D\ncong N\end{smallmatrix}}q^{\frac{1}{2}\Lambda(({\bf m}-{{\bf a}})^*,{({\bf n}+{\bf a})}^*)+\frac{1}{2}\lr{{{\bf m}}-{{\bf a}},{\bf n}}}\frac{|_D\Hom_{\widetilde{\A}}(N,\tau M)_{\tau A\oplus I}|}{q^{[M,N]^1}-1}\\&\quad\quad\quad\times q^{-\frac{1}{2}\lr{{\bf l},{\bf k}}-\frac{1}{2}\lr{{\bf y},{\bf x}-{\bf i}}+\frac{1}{2}\Lambda(-{\bf l}^*-^*{\bf k},-{\bf y}^*-^*{\bf x}+^*{\bf i}))}F_{KL}^AF_{XY}^DX^{-({\bf l}+{\bf y})^*-^*({\bf k}+{\bf x})+^*{\bf i}}.
\end{flalign*}
By \cite[Lemma 4.4]{Rupel2},
\begin{flalign*}
\sigma_2&=\sum\limits_{\begin{smallmatrix}A,B,C,D,I,K,L,X,Y\\D\ncong N\end{smallmatrix}}q^{\frac{1}{2}\Lambda(({\bf m}-{{\bf a}})^*,{({\bf n}+{\bf a})}^*)+\frac{1}{2}\lr{{{\bf m}}-{{\bf a}},{\bf n}}}\frac{a_CF_{CD}^NF_{AB}^{M}F_{IC}^{\tau B}}{q^{[M,N]^1}-1}\\&\quad\quad\quad\times q^{-\frac{1}{2}\lr{{\bf l},{\bf k}}-\frac{1}{2}\lr{{\bf y},{\bf x}-{\bf i}}+\frac{1}{2}\Lambda(-{\bf l}^*-^*{\bf k},-{\bf y}^*-^*{\bf x}+^*{\bf i}))}F_{KL}^AF_{XY}^DX^{-({\bf l}+{\bf y})^*-^*({\bf k}+{\bf x})+^*{\bf i}}.
\end{flalign*}
We remind that each $B$ has no nonzero projective direct summands in the above sum.

Since ${^*{\bf i}}={^*\tau{\bf b}}-{^*{\bf c}}=-{\bf b}^*-{^*{\bf c}}$, we obtain that
\begin{flalign*}
\sigma_2&=\sum\limits_{\begin{smallmatrix}A,B,C,D,I,K,L,X,Y\\D\ncong N\end{smallmatrix}}q^{\frac{1}{2}\Lambda(({\bf m}-{{\bf a}})^*,{({\bf n}+{\bf a})}^*)+\frac{1}{2}\lr{{{\bf m}}-{{\bf a}},{\bf n}}}\frac{a_CF_{CD}^NF_{AB}^{M}F_{IC}^{\tau B}}{q^{[M,N]^1}-1}\\&\quad\quad\quad\times q^{-\frac{1}{2}\lr{{\bf l},{\bf k}}-\frac{1}{2}\lr{{\bf y},{\bf x}-{\bf i}}+\frac{1}{2}\Lambda(-{\bf l}^*-^*{\bf k},-{\bf y}^*-^*{\bf x}+^*{\bf i}))}F_{KL}^AF_{XY}^DX^{-({\bf l}+{\bf y}+{\bf b})^*-^*({\bf k}+{\bf x}+{\bf c})}.
\end{flalign*}

In what follows, we focus on the calculations of the exponents in $\sigma_2$:
\begin{flalign*}
&t:=\frac{1}{2}\Lambda(({\bf m}-{{\bf a}})^*,{({\bf n}+{\bf a})}^*)+\frac{1}{2}\lr{{{\bf m}}-{{\bf a}},{\bf n}}-\frac{1}{2}\lr{{\bf l},{\bf k}}-\frac{1}{2}\lr{{\bf y},{\bf x}-{\bf i}}\\&\quad\quad+\frac{1}{2}\Lambda(-{\bf l}^*-^*{\bf k},-{\bf y}^*-^*{\bf x})-\frac{1}{2}\Lambda({\bf l}^*+^*{\bf k},^*{\bf i})\\
&=\frac{1}{2}\Lambda({\bf m}^*,{\bf n}^*)+\frac{1}{2}\lr{{\bf m},{\bf n}}-\frac{1}{2}\lr{{\bf a},{\bf d}}-\frac{1}{2}\Lambda({\bf a}^*,({\bf d}-{\bf i})^*)+\frac{1}{2}\lr{{\bf a},{\bf i}}-\frac{1}{2}\lr{{\bf l},{\bf k}}\\
&\quad-\frac{1}{2}\lr{{\bf y},{\bf x}-{\bf i}}+\frac{1}{2}\Lambda({\bf a}^*,{\bf d}^*)-\frac{1}{2}\lr{{\bf y},{\bf a}-{\bf l}}+\frac{1}{2}\lr{{\bf l},{\bf d}-{\bf y}}
-\frac{1}{2}\Lambda({\bf l}^*+^*{\bf k},^*{\bf i})\\
&=\frac{1}{2}\Lambda({\bf m}^*,{\bf n}^*)+\frac{1}{2}\lr{{\bf m},{\bf n}}-\frac{1}{2}\lr{{\bf a},{\bf d}}+\frac{1}{2}\Lambda({\bf a}^*,{\bf i}^*)+\frac{1}{2}\lr{{\bf a},{\bf i}}-\frac{1}{2}\lr{{\bf l},{\bf k}}-\frac{1}{2}\lr{{\bf y},{\bf x}-{\bf i}}\\&\quad-\frac{1}{2}\lr{{\bf y},{\bf a}-{\bf l}}+\frac{1}{2}\lr{{\bf l},{\bf d}-{\bf y}}
-\frac{1}{2}\Lambda({\bf l}^*+^*{\bf k},{\bf i}^*)+\frac{1}{2}\Lambda({\bf l}^*+^*{\bf k},B(\widetilde{Q}){\bf i}).
\end{flalign*}
Noting that \begin{flalign*}&\frac{1}{2}\Lambda({\bf a}^*,{\bf i}^*)-\frac{1}{2}\Lambda({\bf l}^*+^*{\bf k},{\bf i}^*)=\frac{1}{2}\Lambda(({\bf a}-{\bf l})^*-^*{\bf k},{\bf i}^*)\\&=
\frac{1}{2}\Lambda({\bf k}^*-^*{\bf k},{\bf i}^*)=\frac{1}{2}\Lambda(B(\widetilde{Q}){\bf k},{\bf i}^*)=\frac{1}{2}\lr{{\bf i},{\bf k}}\end{flalign*}
and \begin{flalign*}\frac{1}{2}\Lambda({\bf l}^*+^*{\bf k},B(\widetilde{Q}){\bf i})=-\frac{1}{2}\lr{{\bf l},{\bf i}}-\frac{1}{2}\lr{{\bf i},{\bf k}},\end{flalign*}
we obtain that
\begin{flalign*}
t&=\frac{1}{2}\Lambda({\bf m}^*,{\bf n}^*)+\frac{1}{2}\lr{{\bf m},{\bf n}}-\frac{1}{2}\lr{{\bf a},{\bf d}}+\frac{1}{2}\lr{{\bf a},{\bf i}}-\frac{1}{2}\lr{{\bf l},{\bf k}}-\frac{1}{2}\lr{{\bf y},{\bf x}}\\&\quad+\frac{1}{2}\lr{{\bf y},{\bf i}}-\frac{1}{2}\lr{{\bf y},{\bf a}}+\frac{1}{2}\lr{{\bf y},{\bf l}}+\frac{1}{2}\lr{{\bf l},{\bf d}}
-\frac{1}{2}\lr{{\bf l},{\bf y}}-\frac{1}{2}\lr{{\bf l},{\bf i}}\\
&=\frac{1}{2}\Lambda({\bf m}^*,{\bf n}^*)+\frac{1}{2}\lr{{\bf m},{\bf n}}-\frac{1}{2}\lr{{\bf a},{\bf d}}+\frac{1}{2}\lr{{\bf y},{\bf l}-{\bf a}+{\bf i}-{\bf x}}\\&\quad+\frac{1}{2}\lr{{\bf l},{\bf d}-{\bf y}-{\bf i}-{\bf k}}+\frac{1}{2}\lr{{\bf a},{\bf i}}.
\end{flalign*}
Since ${\bf l}-{\bf a}=-{\bf k}$ and ${\bf d}-{\bf y}={\bf x}$, we obtain that
\begin{flalign*}
t&=\frac{1}{2}\Lambda({\bf m}^*,{\bf n}^*)+\frac{1}{2}\lr{{\bf m},{\bf n}}-\frac{1}{2}\lr{{\bf a},{\bf d}}+\frac{1}{2}\lr{{\bf a},{\bf i}}-\frac{1}{2}\lr{{\bf y},{\bf x}-{\bf i}}\\&\quad-\frac{1}{2}\lr{{\bf y},{\bf k}}+\frac{1}{2}\lr{{\bf l},{\bf x}-{\bf i}}-\frac{1}{2}\lr{{\bf l},{\bf k}}\\
&=\frac{1}{2}\Lambda({\bf m}^*,{\bf n}^*)+\frac{1}{2}\lr{{\bf m},{\bf n}}-\frac{1}{2}\lr{{\bf a},{\bf d}}+\frac{1}{2}\lr{{\bf a},{\bf i}}+\frac{1}{2}\lr{{\bf l}-{\bf y},{\bf x}-{\bf i}}-\frac{1}{2}\lr{{\bf l}+{\bf y},{\bf k}}.
\end{flalign*}
Thus, \begin{flalign*}&t=t-\frac{1}{2}\lr{{\bf l}+{\bf y},{\bf x}}+\frac{1}{2}\lr{{\bf l}+{\bf y},{\bf x}}\\
&=\frac{1}{2}\Lambda({\bf m}^*,{\bf n}^*)+\frac{1}{2}\lr{{\bf m},{\bf n}}-\frac{1}{2}\lr{{\bf a},{\bf d}}+\frac{1}{2}\lr{{\bf a},{\bf i}}-\frac{1}{2}\lr{{\bf l}+{\bf y},{\bf k}+{\bf x}}+\lr{{\bf l},{\bf x}}-\frac{1}{2}\lr{{\bf l}-{\bf y},{\bf i}}\\
&=\frac{1}{2}\Lambda({\bf m}^*,{\bf n}^*)+\frac{1}{2}\lr{{\bf m},{\bf n}}-\frac{1}{2}\lr{{\bf a},{\bf d}}-\frac{1}{2}\lr{{\bf l}+{\bf y},{\bf k}+{\bf x}}+\lr{{\bf l},{\bf x}}+\frac{1}{2}\lr{{\bf a}-{\bf l}+{\bf y},{\bf i}}\\
&=\frac{1}{2}\Lambda({\bf m}^*,{\bf n}^*)+\frac{1}{2}\lr{{\bf m},{\bf n}}-\frac{1}{2}\lr{{\bf a},{\bf d}}-\frac{1}{2}\lr{{\bf l}+{\bf y},{\bf k}+{\bf x}}+\lr{{\bf l},{\bf x}}+\frac{1}{2}\lr{{\bf k}+{\bf y},{\bf i}}.\end{flalign*}
Observing that \begin{flalign*}&\lr{{\bf k},{\bf n}-{\bf d}}=\lr{{\bf k},{\bf c}}=\lr{{\bf k},\tau{\bf b}-{\bf i}}=\lr{{\bf k},\tau{\bf m}'-\tau{\bf a}'-{\bf i}}
\\&=-\lr{{\bf m}'-{\bf a}',{\bf k}}-\lr{{\bf k},{\bf i}}=-\lr{{\bf m}-{\bf a},{\bf k}}-\lr{{\bf k},{\bf i}},\end{flalign*}
similarly, $$\lr{{\bf y},{\bf n}-{\bf d}}=-\lr{{\bf m}-{\bf a},{\bf y}}-\lr{{\bf y},{\bf i}},$$
we have that
\begin{flalign*}
&\lr{{\bf k},{\bf d}}+\lr{{\bf a},{\bf k}}=\lr{{\bf k},{\bf n}}+\lr{{\bf m},{\bf k}}+\lr{{\bf k},{\bf i}}\\
&\lr{{\bf y},{\bf d}}+\lr{{\bf a},{\bf y}}=\lr{{\bf y},{\bf n}}+\lr{{\bf m},{\bf y}}+\lr{{\bf y},{\bf i}}.
\end{flalign*}
Hence, \begin{flalign*}&-\frac{1}{2}\lr{{\bf a},{\bf d}}-\frac{1}{2}\lr{{\bf l}+{\bf y},{\bf k}+{\bf x}}+\lr{{\bf l},{\bf x}}+\frac{1}{2}\lr{{\bf k}+{\bf y},{\bf i}}\\
&=-\frac{1}{2}\lr{{\bf a},{\bf d}}-\frac{1}{2}\lr{{\bf a}-{\bf k}+{\bf y},{\bf k}+{\bf d}-{\bf y}}+\lr{{\bf a}-{\bf k},{\bf d}-{\bf y}}+\frac{1}{2}\lr{{\bf k}+{\bf y},{\bf i}}\\
&=-\frac{1}{2}(\lr{{\bf y},{\bf d}}+\lr{{\bf a},{\bf y}})-\frac{1}{2}(\lr{{\bf k},{\bf d}}+\lr{{\bf a},{\bf k}})+\frac{1}{2}\lr{{\bf k},{\bf y}}+\frac{1}{2}\lr{{\bf k},{\bf k}}\\&\quad-\frac{1}{2}\lr{{\bf y},{\bf k}}+\frac{1}{2}\lr{{\bf y},{\bf y}}+\frac{1}{2}\lr{{\bf k}+{\bf y},{\bf i}}\\
&=-\frac{1}{2}\lr{{\bf y},{\bf n}}-\frac{1}{2}\lr{{\bf m},{\bf y}}-\frac{1}{2}\lr{{\bf y},{\bf i}}-\frac{1}{2}\lr{{\bf k},{\bf n}}-\frac{1}{2}\lr{{\bf m},{\bf k}}-\frac{1}{2}\lr{{\bf k},{\bf i}}\\
&\quad+\frac{1}{2}\lr{{\bf k}+{\bf y},{\bf i}}+\frac{1}{2}\lr{{\bf k},{\bf y}}+\frac{1}{2}\lr{{\bf k},{\bf k}}-\frac{1}{2}\lr{{\bf y},{\bf k}}+\frac{1}{2}\lr{{\bf y},{\bf y}}\\
&=\lr{{\bf m}-{\bf k},{\bf n}-{\bf y}}-\frac{1}{2}\lr{{\bf m}-{\bf k}+{\bf y},{\bf k}+{\bf n}-{\bf y}}-\frac{1}{2}\lr{{\bf m},{\bf n}},
\end{flalign*}
and then $$t=\frac{1}{2}\Lambda({\bf m}^*,{\bf n}^*)+\lr{{\bf m}-{\bf k},{\bf n}-{\bf y}}-\frac{1}{2}\lr{{\bf m}-{\bf k}+{\bf y},{\bf k}+{\bf n}-{\bf y}}.$$
That is, we get that
\begin{flalign*}
\sigma_2&=q^{\frac{1}{2}\Lambda({\bf m}^*,{\bf n}^*)}\sum\limits_{\begin{smallmatrix}A,B,C,D,I,K,L,X,Y\\D\ncong N\end{smallmatrix}}q^{\lr{{\bf m}-{\bf k},{\bf n}-{\bf y}}-\frac{1}{2}\lr{{\bf m}-{\bf k}+{\bf y},{\bf k}+{\bf n}-{\bf y}}}\frac{a_CF_{CD}^NF_{AB}^{M}F_{IC}^{\tau B}}{q^{[M,N]^1}-1}\\&\quad\quad\quad\quad\quad\quad\times F_{KL}^AF_{XY}^DX^{-({\bf l}+{\bf y}+{\bf b})^*-^*({\bf k}+{\bf x}+{\bf c})}.
\end{flalign*}
In the above sum, suppose that $D\cong N$, then we have that $C=B=I=0$ and $A=M$. So we rewrite $\sigma_2$ as
\begin{flalign*}
\sigma_2&=q^{\frac{1}{2}\Lambda({\bf m}^*,{\bf n}^*)}\sum\limits_{\begin{smallmatrix}A,B,C,D,I,K,L,X,Y\end{smallmatrix}}q^{\lr{{\bf m}-{\bf k},{\bf n}-{\bf y}}-\frac{1}{2}\lr{{\bf m}-{\bf k}+{\bf y},{\bf k}+{\bf n}-{\bf y}}}\frac{a_CF_{CD}^NF_{AB}^{M}F_{IC}^{\tau B}}{q^{[M,N]^1}-1}\\&\quad\quad\quad\quad\quad\quad\times F_{KL}^AF_{XY}^DX^{-({\bf l}+{\bf y}+{\bf b})^*-^*({\bf k}+{\bf x}+{\bf c})}\\&
\quad-q^{\frac{1}{2}\Lambda({\bf m}^*,{\bf n}^*)}\sum\limits_{\begin{smallmatrix}K,L,X,Y\end{smallmatrix}}q^{\lr{{\bf l},{\bf x}}-\frac{1}{2}\lr{{\bf l}+{\bf y},{\bf k}+{\bf x}}}\frac{1}{q^{[M,N]^1}-1} F_{KL}^MF_{XY}^NX^{-({\bf l}+{\bf y})^*-^*({\bf k}+{\bf x})}.
\end{flalign*}
Using the associativity formulas of Hall algebras \begin{equation}\sum\limits_{A}F_{KL}^AF_{AB}^M=\sum\limits_{\tilde{A}}F_{K\tilde{A}}^MF_{LB}^{\tilde{A}}\quad\text{and}\quad
\sum\limits_{D}F_{CD}^NF_{XY}^D=\sum\limits_{\tilde{D}}F_{CX}^{\tilde{D}}F_{\tilde{D}Y}^{N},\end{equation}
we obtain that
\begin{flalign*}
\sigma_2&=q^{\frac{1}{2}\Lambda({\bf m}^*,{\bf n}^*)}\sum\limits_{\begin{smallmatrix}\tilde{A},B,C,\tilde{D},I,K,L,X,Y\end{smallmatrix}}q^{\lr{{\bf m}-{\bf k},{\bf n}-{\bf y}}-\frac{1}{2}\lr{{\bf m}-{\bf k}+{\bf y},{\bf k}+{\bf n}-{\bf y}}}\frac{a_CF_{CX}^{\tilde{D}}F_{LB}^{\tilde{A}}F_{IC}^{\tau B}}{q^{[M,N]^1}-1}\\&\quad\quad\quad\quad\quad\quad\times F_{K\tilde{A}}^MF_{\tilde{D}Y}^NX^{-(\tilde{{\bf a}}+{\bf y})^*-^*({\bf k}+\tilde{{\bf d}})}\\&
\quad-q^{\frac{1}{2}\Lambda({\bf m}^*,{\bf n}^*)}\sum\limits_{\begin{smallmatrix}K,L,X,Y\end{smallmatrix}}q^{\lr{{\bf l},{\bf x}}-\frac{1}{2}\lr{{\bf l}+{\bf y},{\bf k}+{\bf x}}}\frac{1}{q^{[M,N]^1}-1} F_{KL}^MF_{XY}^NX^{-({\bf l}+{\bf y})^*-^*({\bf k}+{\bf x})}.
\end{flalign*}
By \cite[Lemma 4.4]{Rupel2} and the Auslander-Reiten formula, we have that
\begin{flalign*}
\sum\limits_{I,L,X,B,C}a_CF_{CX}^{\tilde{D}}F_{LB}^{\tilde{A}}F_{IC}^{\tau B}=\sum\limits_{I,L,X}|_X\Hom_{\widetilde{\A}}(\tilde{D},\tau \tilde{A})_{\tau L\oplus I}|
=|\Hom_{\widetilde{\A}}(\tilde{D},\tau \tilde{A})|=|\Ext_{\widetilde{\A}}^1(\tilde{A},\tilde{D})|.
\end{flalign*}
Hence,
\begin{flalign*}
\sigma_2&=q^{\frac{1}{2}\Lambda({\bf m}^*,{\bf n}^*)}\sum\limits_{\begin{smallmatrix}\tilde{A},\tilde{D},K,Y\end{smallmatrix}}q^{\lr{\tilde{{\bf a}},\tilde{{\bf d}}}-\frac{1}{2}\lr{\tilde{{\bf a}}+{\bf y},{\bf k}+\tilde{{\bf d}}}}\frac{q^{[\tilde{A},\tilde{D}]^1}}{q^{[M,N]^1}-1} F_{K\tilde{A}}^MF_{\tilde{D}Y}^NX^{-(\tilde{{\bf a}}+{\bf y})^*-^*({\bf k}+\tilde{{\bf d}})}\\&
\quad-q^{\frac{1}{2}\Lambda({\bf m}^*,{\bf n}^*)}\sum\limits_{\begin{smallmatrix}K,L,X,Y\end{smallmatrix}}q^{\lr{{\bf l},{\bf x}}-\frac{1}{2}\lr{{\bf l}+{\bf y},{\bf k}+{\bf x}}}\frac{1}{q^{[M,N]^1}-1} F_{KL}^MF_{XY}^NX^{-({\bf l}+{\bf y})^*-^*({\bf k}+{\bf x})}.
\end{flalign*}
Unifying the indexes in the above two sums, we obtain that
\begin{flalign*}
\sigma_2=q^{\frac{1}{2}\Lambda({\bf m}^*,{\bf n}^*)}
\sum\limits_{A,B,C,D}\frac{q^{[B,C]^1}-1}{q^{[M,N]^1}-1}q^{\lr{{\bf b},{\bf c}}}q^{-\frac{1}{2}\lr{{\bf b}+{\bf d},{\bf a}+{\bf c}}}F_{AB}^MF_{CD}^NX^{-({\bf b}+{\bf d})^\ast-^\ast({\bf a}+{\bf c})}.
\end{flalign*}
Hence, $X_MX_N=\sigma_1+\sigma_2$, we complete the proof.
\end{proof}

Using Lemma \ref{zscx}, we reformulate Theorem \ref{ddlz} as the following
\begin{corollary}\label{maincor}
Let $M,N\in\widetilde{\A}$. Then we have the following equation in $\mathcal{T}_\Lambda:$
\begin{equation}\label{zuihou}
\begin{split}
&(q^{[M,N]^1}-1)X_M X_N=q^{\frac{1}{2}\Lambda({\bf m}^*,{\bf n}^*)}\sum\limits_{E\ncong M\oplus N}
|\Ext_{\widetilde{\A}}^1(M,N)_E|X_E+\\&q^{\frac{1}{2}\Lambda({\bf m}^*,{\bf n}^*)+\frac{1}{2}\lr{{\bf m},{\bf n}}}\sum\limits_{\begin{smallmatrix}D,A,I\\D\ncong N\end{smallmatrix}}q^{-\frac{1}{2}\lr{{\bf a},{\bf d}-{\bf i}}-\frac{1}{2}\Lambda({\bf a}^*,({\bf d}-{\bf i})^*)}|_D\Hom_{\widetilde{\A}}(N,\tau M)_{\tau A\oplus I}|X_{A} X_{D\oplus I[-1]},
\end{split}\end{equation}
where each $A$ has the same maximal projective direct summand as $M$.
\end{corollary}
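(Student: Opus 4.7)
The plan is to observe that Corollary \ref{maincor} is a direct reformulation of Theorem \ref{ddlz}, differing only in the shape of the $q$-exponent weighting the second sum. Since Theorem \ref{ddlz} is already established and the first sum (over $E\ncong M\oplus N$) is literally identical on both sides, the entire content of the corollary reduces to verifying that the exponent
\[
\tfrac{1}{2}\Lambda(({\bf m}-{\bf a})^*,({\bf n}+{\bf a})^*)+\tfrac{1}{2}\langle {\bf m}-{\bf a},{\bf n}\rangle
\]
appearing in the second sum of Theorem \ref{ddlz} agrees, for each admissible tuple $(D,A,I)$ arising from an exact sequence of the form \eqref{zhxl}, with
\[
\tfrac{1}{2}\Lambda({\bf m}^*,{\bf n}^*)+\tfrac{1}{2}\langle {\bf m},{\bf n}\rangle-\tfrac{1}{2}\langle {\bf a},{\bf d}-{\bf i}\rangle-\tfrac{1}{2}\Lambda({\bf a}^*,({\bf d}-{\bf i})^*).
\]

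First I would invoke Lemma \ref{zscx}, which furnishes exactly this identity: it uses only the dimension-vector relation ${\bf d}-{\bf i}={\bf n}-\tau({\bf m}-{\bf a})$ coming from the four-term exact sequence \eqref{zhxl} that defines the triples $(D,A,I)$ parametrising the second sum, together with the compatibility between the bilinear forms $\Lambda(\cdot,\cdot)$, $\langle\cdot,\cdot\rangle$ and the operators $(-)^*$, ${}^*(-)$, $B(\widetilde{Q})$ recorded in Lemma \ref{sjishu}. Since Lemma \ref{zscx} is a purely arithmetic identity that has already been proved in the text, no additional work is needed on that front.

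Next, dividing both sides of the identity in Lemma \ref{zscx} by $2$ and exponentiating converts the coefficient $q^{\frac{1}{2}\Lambda(({\bf m}-{\bf a})^*,({\bf n}+{\bf a})^*)+\frac{1}{2}\langle {\bf m}-{\bf a},{\bf n}\rangle}$ appearing in Theorem \ref{ddlz} into the product
\[
q^{\frac{1}{2}\Lambda({\bf m}^*,{\bf n}^*)+\frac{1}{2}\langle {\bf m},{\bf n}\rangle}\cdot q^{-\frac{1}{2}\langle {\bf a},{\bf d}-{\bf i}\rangle-\frac{1}{2}\Lambda({\bf a}^*,({\bf d}-{\bf i})^*)},
\]
and the common scalar $q^{\frac{1}{2}\Lambda({\bf m}^*,{\bf n}^*)+\frac{1}{2}\langle {\bf m},{\bf n}\rangle}$ can then be pulled outside the summation over $(D,A,I)$, producing exactly the right-hand side of \eqref{zuihou}.

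There is no real obstacle here; the only thing to be careful about is ensuring that the reformulated exponent in Lemma \ref{zscx} is applied uniformly to every term in the sum, and that the constraint that each $A$ has the same maximal projective direct summand as $M$ (which is inherited verbatim from Theorem \ref{ddlz}) is recorded in the final statement. Hence the proof is essentially a one-line substitution: \emph{combine Theorem \ref{ddlz} with Lemma \ref{zscx}}.
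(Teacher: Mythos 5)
Your proposal matches the paper's own proof exactly: the paper states that Corollary \ref{maincor} is obtained by applying Lemma \ref{zscx} to reformulate the exponent in the second sum of Theorem \ref{ddlz}, and factoring out the common scalar $q^{\frac{1}{2}\Lambda({\bf m}^*,{\bf n}^*)+\frac{1}{2}\langle{\bf m},{\bf n}\rangle}$. Nothing further is needed.
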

\begin{remark}
The formulas in Theorem \ref{dyggs} and Theorem \ref{ddlz} (Corollary \ref{maincor}) are different from the Hall multiplication formulas given in Corollary \ref{Hallcfgs}. They can be viewed as the quantum version of the cluster multiplication theorem in the classical cluster algebra, which were proved by Caldero-Keller \cite{CK2005} for finite type, Hubery \cite{Hubery1} for tame type and Xiao-Xu \cite{XX,Xu} for any acyclic quivers.
\end{remark}
The following corollary is a generalization of \cite[Theorem 4.5]{Rupel1}.
\begin{corollary}\label{mutatione}
Let $M,N\in\widetilde{\A}$ with a unique (up to scalar) non-trivial extension $E\in\Ext_{\widetilde{\A}}^1(M,N)$; in particular $\dim_{\End(M)}\Ext_{\widetilde{\A}}^1(M,N)=1$.
Let $\theta\in\Hom_{\widetilde{\A}}(N,\tau M)$ be the equivalent morphism with $A,D,I$ as before. Then we have the following equation
\begin{equation}
X_M X_N=q^{\frac{1}{2}\Lambda({\bf m}^*,{\bf n}^*)}X_E+q^{\frac{1}{2}\Lambda({\bf m}^*,{\bf n}^*)+\frac{1}{2}\lr{{\bf m},{\bf n}}-\frac{1}{2}\lr{{\bf a},{\bf d}-{\bf i}}-\frac{1}{2}\Lambda({\bf a}^*,({\bf d}-{\bf i})^*)}X_{A} X_{D\oplus I[-1]}.
\end{equation}
In particular, if $\Hom_{\widetilde{\A}}(A,I)=0=\Ext^1_{\widetilde{\A}}(A,D)$, then
\begin{equation}
X_M X_N=q^{\frac{1}{2}\Lambda({\bf m}^*,{\bf n}^*)}X_E+q^{\frac{1}{2}\Lambda({\bf m}^*,{\bf n}^*)+\frac{1}{2}\lr{{\bf m},{\bf n}}-\frac{1}{2}\lr{{\bf a},{\bf d}}} X_{A\oplus D\oplus I[-1]}.
\end{equation}
\end{corollary}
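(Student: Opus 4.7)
The plan is to derive Corollary \ref{mutatione} as a direct specialization of Corollary \ref{maincor}. Under the uniqueness hypotheses, each of the two sums appearing in equation (\ref{zuihou}) collapses to a single term, and the prefactor $(q^{[M,N]^1}-1)$ on the left cancels cleanly against the residual counts on the right.

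I would begin with the first sum on the right-hand side of (\ref{zuihou}). Since every non-trivial element of $\Ext^1_{\widetilde{\A}}(M,N)$ is an $\End(M)$-scalar multiple of the unique extension realized by $E$, all non-trivial extensions share the same middle term $E$, so $|\Ext^1_{\widetilde{\A}}(M,N)_E|=q^{[M,N]^1}-1$ while $|\Ext^1_{\widetilde{\A}}(M,N)_{E'}|=0$ for any $E'\ncong M\oplus N$, $E'\ncong E$. Hence the first sum reduces to $(q^{[M,N]^1}-1)X_E$.

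Next, I would use the Auslander--Reiten isomorphism $\Hom_{\widetilde{\A}}(N,\tau M)\cong D\Ext^1_{\widetilde{\A}}(M,N)$, which respects the $\End(M)$-action, to conclude that every non-zero morphism $N\to\tau M$ is a scalar multiple of the given $\theta$ and therefore has the same kernel $D$ and cokernel $\tau A\oplus I$ as $\theta$. Summing $|{}_D\Hom_{\widetilde{\A}}(N,\tau M)_{\tau A\oplus I}|$ over the allowed $(D,A,I)$ with $D\ncong N$ therefore counts all non-zero morphisms, giving $q^{[M,N]^1}-1$, with only the specified $(D,A,I)$ contributing. Dividing both sides of (\ref{zuihou}) by $q^{[M,N]^1}-1$ yields the first displayed equation of the corollary.

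For the second displayed equation I would show, under the additional hypothesis $\Hom_{\widetilde{\A}}(A,I)=0=\Ext^1_{\widetilde{\A}}(A,D)$, that
\[X_A X_{D\oplus I[-1]}=v^{\Lambda({\bf a}^\ast,({\bf d}-{\bf i})^\ast)}X_{A\oplus D\oplus I[-1]}.\]
The strategy is to reduce the product to a product of three single-factor quantum cluster characters and then reassemble, step by step invoking Corollary \ref{Hallcfgs}: use (\ref{qre4}) to write $X_{D\oplus I[-1]}=v^{\Lambda({\bf i}^\ast,{\bf d}^\ast)}X_{I[-1]}X_D$; use (\ref{qre6}) together with $\Hom(A,I)=0$ to get $X_A X_{I[-1]}=v^{-\Lambda({\bf a}^\ast,{\bf i}^\ast)}X_{A\oplus I[-1]}$; rewrite $X_{A\oplus I[-1]}$ as $v^{\Lambda({\bf i}^\ast,{\bf a}^\ast)}X_{I[-1]}X_A$; use (\ref{qre3}) with $\Ext^1(A,D)=0$, noting that $\langle{\bf a},{\bf d}\rangle=[A,D]^0$ exactly cancels the Hall denominator $|\Hom(A,D)|$, to obtain $X_A X_D=v^{\Lambda({\bf a}^\ast,{\bf d}^\ast)}X_{A\oplus D}$; and finally absorb $X_{I[-1]}$ via (\ref{qre4}). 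The resulting exponents telescope to $\Lambda({\bf a}^\ast,({\bf d}-{\bf i})^\ast)$ after repeated use of the skew-symmetry of $\Lambda$ and of the identity $^{\ast}({\bf a}+{\bf d})={\bf a}^\ast+{\bf d}^\ast-B(\widetilde{Q})({\bf a}+{\bf d})$ implicit in Lemma \ref{sjishu}.

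Substituting this identity into the first equation, the exponent $-\tfrac12\Lambda({\bf a}^\ast,({\bf d}-{\bf i})^\ast)$ in the coefficient cancels, leaving $q^{\frac12\Lambda({\bf m}^\ast,{\bf n}^\ast)+\frac12\langle{\bf m},{\bf n}\rangle-\frac12\langle{\bf a},{\bf d}-{\bf i}\rangle}$. Injectivity of $I$ gives $[A,I]^1=0$, and $\Hom(A,I)=0$ gives $[A,I]^0=0$, so $\langle{\bf a},{\bf i}\rangle=0$ and $\langle{\bf a},{\bf d}-{\bf i}\rangle=\langle{\bf a},{\bf d}\rangle$, producing the stated second equation. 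The main obstacle is the bookkeeping in the telescoping of the $\Lambda$-twists in the computation of $X_A X_{D\oplus I[-1]}$; everything else is a routine substitution into Corollary \ref{maincor}.
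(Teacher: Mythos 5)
Your proof is correct. The paper states Corollary \ref{mutatione} without an explicit proof, presenting it as an immediate specialization of Theorem \ref{ddlz}/Corollary \ref{maincor}, and your argument fills in exactly the intended derivation: the uniqueness hypothesis makes both sums in (\ref{zuihou}) collapse to a single term with the common count $q^{[M,N]^1}-1$, which then cancels the prefactor. Your computation of $X_A X_{D\oplus I[-1]}=v^{\Lambda({\bf a}^\ast,({\bf d}-{\bf i})^\ast)}X_{A\oplus D\oplus I[-1]}$ via (\ref{qre4}), (\ref{qre6}) and (\ref{qre3}) under the vanishing hypotheses checks out (the telescoping indeed produces $\Lambda({\bf a}^\ast,{\bf d}^\ast)-\Lambda({\bf a}^\ast,{\bf i}^\ast)$), and the final simplification $\lr{{\bf a},{\bf i}}=0$ from $\Hom(A,I)=0$ and injectivity of $I$ is exactly what is needed. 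One small caveat you tacitly rely on: for "unique up to scalar" to force all non-zero morphisms $N\to\tau M$ to share the same kernel and cokernel, the acting endomorphisms must be invertible, so the hypothesis implicitly requires $\End(M)$ (and the action on $\Hom(N,\tau M)$) to behave like a division ring — this is consistent with the setting of Rupel's Theorem 4.5, which the corollary generalizes, but worth stating.
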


We now provide an example to interpret the formulas in our main Theorems \ref{dyggs} and \ref{ddlz}.
\begin{example}
Let $Q$ be the Kronecker quiver
$$
\xymatrix {1  \ar @<2pt>[r] \ar @<-2pt>[r]&  2}.
$$
Then
$$R^{'}=\left(\begin{array}{cc} 0 & 2\\
0& 0
\end{array}\right),\
R=\left(\begin{array}{cc} 0 & 0\\
2& 0
\end{array}\right),\
I-R^{'}=\left(\begin{array}{cc} 1 & -2\\
0& 1
\end{array}\right),\
B=\left(\begin{array}{cc} 0 & 2\\
-2& 0
\end{array}\right).$$

We take
$\Lambda=\left(\begin{array}{cc} 0& 1\\
-1 & 0
\end{array}\right)$, and it satisfies that
\begin{align*}\label{eq:simply_laced_compatible}
\Lambda(-B)=\left(\begin{array}{cc} 2 & 0\\
0 & 2 \end{array}\right).
\end{align*}
Hence, in what follows, we consider the representations of $Q$ over the field $k=\mathbb{F}_{q^2}$.
For each $i=1,2$, we denote by $S_i$ the simple module corresponding to the vertex $i$, and by $P_i$ and $I_i$ the projective cover and the injective envelope of $S_i$, respectively.

For any $p$ in the projective line $\mathbb{P}^{1}(k)$ of degree 1,
we denote by $R_p(l)$ the regular indecomposable module in the
homogeneous tube with the dimension vector $(l, l)$ for $l \in \mathbb{Z}_{\geq 1}$.
By the definition of the quantum cluster character $X_M$, we have that $$X_{R_{p}(1)}=X^{e_1-e_2}+X^{-e_1-e_2}+X^{-e_1+e_2},\ \ \text{for any} \ p\in \mathbb{P}^{1}(k),$$
where $e_1,e_2$ is the 2-dimensional unit basis vector.
So we denote $X_{R_{p}(1)}$ by  $X_\delta$.

By using the definition of the quantum cluster character and direct calculations, we obtain that (see also \cite{dx1})
\begin{equation}\label{jianyan2}X_{S_1} X_{S_2}=q^{\frac{1}{2}}X_\delta+q^{-\frac{3}{2}}X_{(I_1\oplus I_2)[-1]}\end{equation}
and
\begin{equation}\label{jianyan1}
X_{P_2[1]}X_{P_1}=q^{\frac{1}{2}}X_\delta+q^{-\frac{3}{2}}X_{S_2\oplus I_1[-1]}.\end{equation}

On the other hand,   we have the following exact sequence
\[
  0 \longrightarrow S_2 \longrightarrow  R_p(1)  \longrightarrow  S_1   \longrightarrow  0
 \] for each $\ p\in \mathbb{P}^{1}(k)$, and
$\sum\limits_{p\in \mathbb{P}^{1}(k)}|\Ext^1_{\mathfrak{S}}(S_1,S_2)_{R_p(1)}|=(q^2)^2-1=q^4-1$. Thus,
$$q^{\frac{1}{2}\Lambda({e_1}^*,{e_2}^*)}\sum\limits_{E\ncong S_1\oplus S_2}
|\Ext^1_{\mathfrak{S}}(S_1,S_2)_E|X_E=q^{\frac{1}{2}}(q^4-1)X_\delta.$$

It is easy to see that for any nonzero morphism $f: S_2 \rightarrow \tau S_1$ we have that $\Ker f=0$ and $\Coker f\cong I_1\oplus I_2$.
Thus, the second term on the righthand side of the equation (\ref{zuihou}) is equal to the following
\begin{flalign*}&q^{\frac{1}{2}\Lambda({e_1}^*,{e_2}^*)+\frac{1}{2}\lr{{e_1},{e_2}}}|_0\Hom_{\mathfrak{S}}(S_2,\tau S_1)_{I_1 \oplus I _2}|X_{(I_1\oplus I_2)[-1]}\\
&=q^{\frac{1}{2}+\frac{1}{2}\times(-4)}((q^2)^2-1)X_{(I_1\oplus I_2)[-1]}\\
&=q^{-\frac{3}{2}}(q^4-1)X_{(I_1\oplus I_2)[-1]}.\end{flalign*}

Hence, by Corollary \ref{maincor}, we have that $$(q^4-1)X_{S_1} X_{S_2}=q^{\frac{1}{2}}(q^4-1)X_\delta+q^{-\frac{3}{2}}(q^4-1)X_{(I_1\oplus I_2)[-1]},$$
which is the same as the equation (\ref{jianyan2}).

Similarly, we illustrate the first formula in Theorem \ref{dyggs} for $M=P_1$ and $P=P_2$ as the following
$$(q^4-1)X_{P_2[1]}X_{P_1}=q^{\frac{1}{2}}((q^4-1)X_\delta+q^{-2}(q^4-1)X_{S_2\oplus I_1[-1]}),$$
which is the same as the equation (\ref{jianyan1}).

\end{example}

\section*{Acknowledgments}
We would like to thank Professor Fan Xu for his helpful and valuable suggestions and comments on this project, and the consultations with him played a crucial role in the final stage of this manuscript.

\end{document}